\theoremstyle{plain}
\newtheorem{thm}{Theorem}[section]
\theoremstyle{plain}
\newtheorem{lem}[thm]{Lemma}
\newtheorem{prop}[thm]{Proposition}
\newtheorem{cor}[thm]{Corollary}
\theoremstyle{definition}
\newtheorem{defi}{Definition}[section]
\newtheorem{rem}{Remark}[section]
\newcommand{\hn}{\mathbb{H}^{N}}
\newcommand{\RR}{\mathbb{R}}
\newcommand{\HH}{\mathbb{H}}
\newcommand{\GM}{\mathbb{G}_{\HH^N}^s}
\newcommand{\GH}{\mathbb{G}_{\HH^N}^s}
\newcommand{\ka}{\overline{\kappa}}
\newcommand{\rd}{{\rm d}}
\newcommand{\dg}{\rd\mu_{\HH^N}}
\newcommand{\dgh}{\rd\mu_{\HH^N}}
\numberwithin{equation}{section} \allowdisplaybreaks
\begin{document}
 \title[Fractional Porous Medium Equation on the hyperbolic space]{The Fractional Porous Medium Equation\\ on the hyperbolic space}
\author{Elvise Berchio}
\address{\hbox{\parbox{5.7in}{\medskip\noindent{Dipartimento di Scienze Matematiche, \\
Politecnico di Torino,\\
        Corso Duca degli Abruzzi 24, 10129 Torino, Italy. \\[3pt]
        \em{E-mail address: }{\tt elvise.berchio@polito.it}}}}}

\author{Matteo Bonforte}
\address{\hbox{\parbox{5.7in}{\medskip\noindent{Departamento de Matem\'aticas,   Universidad Aut\'onoma de Madrid, and \\
ICMAT - Instituto de Ciencias Matem\'{a}ticas, CSIC-UAM-UC3M-UCM\\ 
Campus de Cantoblanco, 28049 Madrid, Spain
 \\[3pt]
        \em{E-mail address: }{\tt matteo.bonforte@uam.es}}}}}

        \author[Debdip Ganguly]{Debdip Ganguly}
\address{\hbox{\parbox{5.7in}{\medskip\noindent{Department of Mathematics,\\
 Indian Institute of Technology Delhi,\\
 IIT Campus, Hauz Khas, New Delhi\\
        Delhi 110016, India. \\[3pt]
        \em{E-mail address: }{\tt debdipmath@gmail.com}}}}}

\author{Gabriele Grillo}
\address{\hbox{\parbox{5.7in}{\medskip\noindent{Dipartimento di Matematica,\\
Politecnico di Milano,\\
   Piazza Leonardo da Vinci 32, 20133 Milano, Italy. \\[3pt]
        \em{E-mail address: }{\tt
          gabriele.grillo@polimi.it}}}}}

\date{}





\keywords{Fractional porous medium equation; Hyperbolic space; a priori estimates; smoothing effects}

\subjclass[2010]{Primary: 35R01. Secondary: 35K65, 35A01, 35R11, 58J35.}

\begin{abstract}
We consider a nonlinear degenerate parabolic equation of porous medium type, whose diffusion is driven by the (spectral) fractional Laplacian on the hyperbolic space.  We provide existence results for solutions, in an appropriate weak sense, for data belonging either to the usual $L^p$ spaces or to larger (weighted) spaces determined either in terms of a ground state of $\Delta_{\hn}$, or of the (fractional) Green's function.  For such solutions, we also prove different kind of smoothing effects, in the form of quantitative $L^1-L^\infty$ estimates. To the best of our knowledge, this seems the first time in which the fractional porous medium equation has been treated on non-compact, geometrically non-trivial examples.
\end{abstract}

\maketitle

\tableofcontents

\normalsize


 \section{Introduction}

This article is devoted to the study of \emph{nonnegative} solutions to the Fractional Porous Medium Equation (FPME) on the hyperbolic space $\hn$. More precisely, we consider the Cauchy problem:
\begin{equation}\label{NFDE}
\left \{ \begin{array}{ll}
\partial_t u + (- \Delta_{\hn})^{s} u^m= 0\,, &  (t,x)\in  (0, \infty) \times \hn;\\
u(0,x)=u_0(x)\ge0\,, & x\in \hn,
\end{array}
\right.
\end{equation}
where  $(-\Delta_{\hn})^s$ denotes the spectral fractional Laplacian on the hyperbolic space, $0 < s < 1$,  $m > 1$, and we use the standard convention $u^m=|u|^{m-1}u$. The operator $(-\Delta_{\hn})^s$ is in fact defined by functional calculus, and well-posedness of \eqref{NFDE} in an appropriate sense will be one of the main issues in this paper, the other one being the validity of suitable \it smoothing effects \rm for such evolution, this meaning quantitative bounds on the $L^\infty$ norm of the solution at time $t>0$ in terms of a (possibly weighted) $L^p$ norm ($p\ge1$) of the initial datum. In fact, we shall prove three different estimates of that type, each dealing with a larger class of initial data and different time behaviour as $t\to0$ and $t\to\infty$ accordingly.

The study of the FPME in the Euclidean setting has been initiated in \cite{dPQRV1} for the special case $s=1/2$, and continued in the case of general exponents in \cite{dPQRV2}. In such papers well-posedness of the evolutions for $L^p$ data is proved even when $m<1$ (the \it fractional fast diffusion \rm case), and appropriate smoothing effects are proved for solutions. By a \it smoothing effect \rm we mean a bound of the form
\begin{equation}\label{smoothing}
\|u(t)\|_\infty\le C\frac{\|u_0\|_p^{\alpha} \normalcolor}{t^\beta}\ \ \ \forall t>0
\end{equation}
for suitable exponents $\alpha, \beta$, and possible generalizations of \eqref{smoothing} in which in the r.h.s. a \it weighted \rm $L^p$ norm appears. Such kind of instantaneous regularization ($L^p$ data are smoothed out instantaneously into bounded solutions) is typical of heat-like evolution equations, see e.g. \cite{DA} for the linear case and \cite{V, V2} for porous medium-type evolutions.
A number of subsequent results concerning e.g. pointwise bounds on solutions, propagation of positivity, existence, uniqueness and properties of fundamental (or Barenblatt) solution i.e. solutions corresponding to a Dirac delta as initial datum, regularity of solutions, were proved later in \cite{AC, BV3, GMP1, V3, VdPQR}.


The FPME has been later studied in bounded domains, with appropriate boundary conditions, mainly of homogeneous Dirichlet type. In fact, there are several different versions of what one might call a Dirichlet fractional Laplacian on domains and these different versions have been being actively investigated recently, first in \cite{BV2,BV1} and then e.g. in \cite{BFR, BFV, BSV}. It is particularly important in regard to the contents of the present paper that some of these papers deal with the FPME by using properties of the \it Green's function \rm associated to the version of the fractional Laplacian considered, starting from the very definition of solution, a strategy we shall use in this paper too. This is a particularly useful approach also in view of the technical difficulties in dealing with extension methods \it \`a la \rm Caffarelli-Silvestre \cite{CS} on manifolds, as developed in \cite{BGS}, in \cite{GS} for the higher rank case, see also \cite{BGFW} for the case of nonsingular kernels.

The  analysis  of the heat equation in the setting of Riemannian manifolds is a widely studied topic, see e.g.  \cite{Gri, G, G2} and references therein.
The  analysis  of \it nonlinear \rm diffusions of porous medium type on manifolds started instead just recently. The first results in this connection are to our knowledge given in \cite{BGV}, in which basic properties of the porous medium equation on Cartan-Hadamard manifolds, namely simply connected manifolds of nonpositive sectional curvature, are considered. A number of subsequent recent contributions deal with the porous and fast diffusion equations (in the nonfractional case) on manifolds, mainly in the case of negative curvature, starting with \cite{GM1,V4} and  continuing e.g. in \cite{GIM,GM,GMP2,GMP,GMV-MA,GMV}.

It should be noted that the long-time behaviour of solutions even to the \it linear \rm heat equation posed on negatively curved manifolds is completely different from the Euclidean one: in fact, one has
\begin{equation}\label{linear}
K_{\mathbb{R}^N}(t,x,x)\asymp t^{ -N/2},\ \ \ \ K_{\mathbb{H}^N}(t,x,x)\asymp t^{-3/2}e^{-(N-1)^2t}\ \ \textrm{as}\ t\to+\infty,
\end{equation}
where $K_{\mathbb{R}^N}$ is the heat kernel on the $N$-dimensional Euclidean space, and $K_{\mathbb{H}^N}$ is the heat kernel on the most important example of negatively curved, noncompact manifold, namely on the \it hyperbolic space\rm, i.e. the simply connected manifold whose sectional curvatures are constant and equal to -1.  In the series of papers just quoted above, significant differences are shown to appear when dealing with the nonlinear setting as well: in particular, the form of the smoothing effects may be quite different.\normalcolor

Little is known, however, in the fractional case. Some recent results on the FPME on compact manifolds with conical singularities are given in \cite{RS}, see also \cite{RS2, RS3} for the non-fractional case. It seems that no results are available in geometrically nontrivial, noncompact cases. Our goal here will be indeed to deal with the FPME on the hyperbolic space $\mathbb{H}^N$.

The first aim of the paper will be to introduce a concept of solution which will be based on the existence and properties of the \it fractional Green function $\GM$\rm, namely the kernel of the operator $(-\Delta_{\hn})^{-s}$. The latter operator and the corresponding kernel can be defined by functional calculus, see \eqref{g1-semigroup} below. Such definition is given precisely in Definition \ref{defi_WDS}, and involves data that are integrable w.r.t. the Riemannian measure or, at least, that are integrable \it with respect to suitable weights\rm. Such weights can be chosen to be either the \it ground state eigenfunction \rm of $-\Delta_{\hn}$, or any smooth, strictly positive function whose tail has the same decay of $\GM$ at infinity. This latter class is shown to be larger than the former, as it follows by comparing the behaviour of the ground state at infinity, see \eqref{bound}, with the one of the fractional Green function, see \eqref{Hyp.Green.HN0b}. Existence results for weak dual solutions are then given in Theorem \ref{thm-existence}, whereas the main smoothing effects are stated in the other main results of this paper, namely Theorems \ref{thm.smoothing.HN-like}, \ref{smoothPhi1}, \ref{smoothPhi}. Of course, the specific smoothing effect depends on the class of data chosen.

\medskip

The paper is organized as follows: in Section~2 we introduce some of the notations, the definition of Weak Dual Solution to \eqref{NFDE} and its relation with appropriate weighted spaces. We state our main existence and uniqueness result in Theorem~\ref{thm-existence} and then we state the smoothing estimates in
Theorems~\ref{thm.smoothing.HN-like}, \ref{smoothPhi1} and \ref{smoothPhi}. In Section~3, we derive fundamental estimates for Weak Dual Solutions
in Proposition~\ref{crucial} and we derive monotonicity in time estimates for solutions in weighted $L^1-$ spaces in
Propositions~\ref{monotonePhi}, \ref{mon_phi1}. Section~4 is devoted to the proof of the smoothing estimates stated in Section~2.
Section~5 contains existence of mild solutions using Crandall-Ligget Theory or Brezis-Komura theory and we prove that mild solutions are also
Weak Dual Solutions, see Theorem~\ref{thm-weak-mild}. Finally, we prove the existence Theorem~\ref{thm-existence}, by approximating Weak Dual Solutions (from below) in terms of mild solutions constructed in  Section 5. Appendix \ref{s-Green-function} contains a detailed proof of all crucial Green function estimates on the hyperbolic space which play a pivotal role in our study

 \section{Preliminaries and main results}
  \subsection{Definition of weak dual solutions}
  An important aspect is the right notion of solution to \eqref{NFDE}. There are many notions starting from weak solution to strong solutions.
Here we shall be dealing with the notion of \emph{weak dual solutions} introduced in \cite{BV2}. This notion has its own advantage in obtaining a priori estimates of solutions. Before formally
defining weak dual solutions, we recall some crucial facts.

The fractional power of the Laplacian on $\hn$ can be written in terms of the heat semigroup as follows:
\begin{equation*}
(- \Delta_{\hn})^{s}u = \frac{1}{\Gamma(-s)}
 \int_{0}^{\infty} ({e^{t \Delta_{\hn}}u - u}) \, \frac{{\rm d}t}{t^{1 + s}}\,,
\end{equation*}
 for an appropriate set of functions $u$   and where $\Gamma$ denotes the Euler Gamma function. On the other hand, we denote by $\GM(x,x_0)$ the fractional Green function on $\hn$ with fixed pole $x_0\in \hn$, and by $\mu_{\hn}$ its Riemannian measure. Functional calculus allows to define the inverse of $(- \Delta_{\hn})^{s}$,  on an appropriate set of functions $u$, as
\begin{equation}\label{g1-semigroup}
(- \Delta_{\hn})^{-s}u = \int_0^{+\infty} \frac{e^{t \Delta_{\hn}}u}{t^{1-s}}\,dt.
\end{equation}
Since the kernel of $(- \Delta_{\hn})^{-s}$ is $\GM$, there holds:
\begin{equation*}
(- \Delta_{\hn})^{-s} u=\int_{\hn} \GM(\cdot, y) \,u(y)\, \dg(y).
\end{equation*}

In the following we will deal with solutions with initial data in $L^1(\hn)$ or in suitable weighted spaces containing $L^1(\hn)$. More precisely, we will deal with two kinds of weights, described below. The smoothing effects proved in the two classes will of course be different.


\medskip

The first class of weights we will deal with is defined as follows:
\begin{equation}\label{W}
 \mathcal{W} =
 \left \{\Phi= (-\Delta_{\hn})^{-s}\psi \quad \text{for some nonnegative } \psi\in \mbox{C}_c^{\infty}(\hn) \right\}\,.
\end{equation}
It's worth noting that $\mathcal{W} \subset L^{\infty}(\hn)$ and weights in $\mathcal{W}$ behave asymptotically like the Green function at infinity. More precisely, in Lemma \ref{Green-operator-estimate} below, we prove that for all $\Phi \in \mathcal{W}$ there exist
$R,C_1, C_2>0$ such that
\begin{equation}\label{boundgreen}
 C_1\,r^{s-1}\,e^{-(N-1) r} \leq  \Phi (x) \leq C_2\,r^{s-1}\,  e^{-(N-1) r}  \quad  \text{ for } r\geq  R\,
\end{equation}
where  $r:=r(x,o)$ denotes the geodesic distance from a fixed $o \in \hn$.

To introduce the second weight, we recall the definition of \emph{generalised ground state} for $(- \Delta_{\hn})^{s}$. Let $\Lambda = \frac{(N-1)^2}{4}$ be the bottom of the spectrum of the Laplace-Beltrami operator on $\hn$. Then the equation
$$
(- \Delta_{\hn})^{s} u = \Lambda^s u \quad \text{in }\hn
$$
admits a positive radial solution  $\Phi_1(x)=\Phi_1(r(x,o))$ . $\Phi_1$ is then called a generalised ground state or the eigenfunction, and is defined up to multiplicative constants and up to the choice of the pole $x_0$. It is known e.g. from \cite[Section 3.3]{BGGV}, that $\Phi_1$ satisfies
\begin{equation}\label{bound}
\alpha (1 + r)e^{-\frac{(N-1)}{2}r}  \leq \Phi_1(x) \leq \beta (1 + r)e^{-\frac{(N-1)}{2}r} \quad \text{for all } r > 0
\end{equation}
and for some $\alpha,\beta>0$.\par  For $\Phi \in \mathcal{W}$ or $\Phi=\Phi_1$, we denote by
$$L^{1}_{\Phi} (\hn):=\{u: \hn \rightarrow \hn \text{ measurable } : \sup_{x_0\in \hn} \int_{\hn} \, |u(x)|\,(\Phi \circ \tau_{-x_0})(x)\, \dgh(x) < +\infty\}$$ and by $\|u\|_{L^1_{\Phi}(\hn)}:=\sup_{x_0 \in \hn} \int_{\hn} \, |u(x)|\,(\Phi \circ \tau_{-x_0})(x)\, \dgh(x)$, where $\tau_{-x_0}$ is the standard hyperbolic translation.  Note that \eqref{boundgreen} and \eqref{bound} readily imply that $\Phi\le C\Phi_1$ on $\hn$, for a suitable $C>0$, so that $L^{1}_{\Phi_1} (\hn)\subset L^{1}_{\Phi} (\hn)$ for all $\Phi \in \mathcal{W}$. In particular, since $\Phi=o(\Phi_1)$ at infinity, the inclusion is strict.

We are finally ready to state
\begin{defi}\label{defi_WDS}
Let $\Phi \in \mathcal{W}$, where $\mathcal{W}$ is given in \eqref{W}, or $\Phi=\Phi_1$, a ground state of $-\Delta_{\hn},$  or $\Phi\equiv 1$. We say that $u$ is a  Weak Dual Solution (WDS) to Problem~\eqref{NFDE} in
$[0, T) \times \hn$ if

\begin{itemize}
\item  $u \in C([0, T) ; L^{1}_{\Phi \circ \tau_{-x_0}} (\hn))$, for all $x_0 \in \hn$   and $u^m \in L^1\left( (0, T) ; L^{1}_{loc}(\hn) \right)$ ;

\medskip

\item $u$ satisfies the following identity

\begin{equation}\label{def_eq}
\int_{0}^{T} \int_{\hn} (- \Delta_{\hn})^{-s}(u) \, \partial_t \psi \,
\dg \, {\rm d}t - \int_{0}^{T} \int_{\hn} u^m \psi \, \dg\, {\rm d}t = 0
\end{equation}
for every test function $\psi \in C^1_c(0,T; \mbox{L}_c^{\infty}(\hn))$ ;

\medskip

\item $u(0,x)=u_0(x)$ a.e. in $\hn$.

\end{itemize}
\end{defi}
%
%
%
\begin{rem}\label{remdef}
We note that the first term of \eqref{def_eq} is finite. Indeed, using Fubini Theorem, we have
$$
\int_{0}^{T} \int_{\hn} (- \Delta_{\hn})^{-s}(u) \, \partial_t\psi \,
\dg \, {\rm d}t = \int_{0}^{T} \int_{\hn} (- \Delta_{\hn})^{-s}\left(\partial_t\psi \right) \, u  \,
\dg \, {\rm d}t
$$
for all $\psi \in C^1_c(0,T; \mbox{L}_c^{\infty}(\hn))$. Hence, if   $u \in C([0, T) ; L^{1}_{\Phi \circ \tau_{-x_0}} (\hn))$, for $\Phi  \in \mathcal{W}$ and for all $x_0 \in \hn$,   then $\Phi=(-\Delta_{\hn})^{-s}\overline \psi  \in \mathcal{W}$ for some positive $ \overline \psi \in C^{\infty}_c(\hn)$ and, since $\frac{(-\Delta_{\hn})^{-s} \left(\partial_t \psi\right) }{(-\Delta_{\hn})^{-s}\overline \psi }\in L^{\infty}(\hn)$  (see Lemma \ref{Green-operator-estimate} below) we conclude that the above integrals are finite.

Since $L^1(\hn)\subset  L^{1}_{\Phi_1} (\hn)\subset L^{1}_{\Phi} (\hn)$ for all $\Phi \in \mathcal{W}$, the same conclusion holds if either $\Phi=\Phi_1$ or $\Phi\equiv 1$.

This class of solutions is quite big, indeed it contains mild (semigroup) $L^1$-solutions, see Section \ref{sect.semigroups}. On the other hand, in Remark \ref{Lp-stab-WDS} we see that WDS are contained in the class of very weak (distributional) solutions.\normalcolor
\end{rem}
\subsection{Main results}
The construction of a WDS will be done by means of approximating with semigroup (mild) solutions constructed either in $L^1(\hn)$ or in $H^{-s}(\hn)$, by means of appropriate nonlinear semigroup techniques both in Banach and Hilbert spaces. In Section \ref{sect.semigroups}  we show that mild solutions are weak dual solutions. 

\begin{thm}[Existence and uniqueness of  Nonnegative Minimal Weak Dual Solutions in $L^1_{\Phi}$]\label{thm-existence} Let $\Phi \in \mathcal{W}$, where $\mathcal{W}$ is given in \eqref{W}, or $\Phi=\Phi_1$, a  ground state of $-\Delta_{\hn}$, or $\Phi\equiv1$. For every  nonnegative $ u_0\in L^1_{\Phi}(\Omega)$ there exists a weak dual solution to Problem~\eqref{NFDE}.   Such a solution is obtained as the unique monotone limit of nonnegative $L^1$-semigroup (mild) solutions which exist and are unique. We call such solution the \textsl{minimal WDS}. \normalcolor In this class of solutions the standard comparison principle holds.
%
\end{thm}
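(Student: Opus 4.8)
The plan is to construct the minimal WDS by a two-step approximation: first regularise the initial datum, then pass to the limit in the associated family of semigroup (mild) solutions, using the monotonicity structure of the problem throughout. We begin with a nonnegative $u_0 \in L^1_\Phi(\hn)$. Since $C_c^\infty(\hn)$ (or at least $L^1(\hn) \cap L^\infty(\hn)$) is dense in $L^1_\Phi(\hn)$ in an appropriate monotone sense, we choose an increasing sequence of nonnegative data $u_{0,n} \in L^1(\hn) \cap L^\infty(\hn)$ with $u_{0,n} \uparrow u_0$ pointwise a.e.\ and in $L^1_\Phi(\hn)$; concretely one may take $u_{0,n} = \min\{u_0, n\}\,\chi_{B_n}$, where $B_n$ is the geodesic ball of radius $n$ about the pole $x_0$, and invoke monotone convergence against the weight $\Phi$. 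For each $n$, Section \ref{sect.semigroups} provides a unique nonnegative $L^1$-mild solution $u_n$ to \eqref{NFDE} with datum $u_{0,n}$, obtained via Crandall--Liggett (Banach) or Brezis--Kōmura (Hilbert) theory, and by Theorem \ref{thm-weak-mild} each $u_n$ is also a WDS.

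The heart of the argument is the monotonicity $u_n \le u_{n+1}$, which follows from the comparison principle available at the level of $L^1$-mild solutions (it is standard for the $m$-accretive operator $A = (-\Delta_{\hn})^s(\,\cdot\,)^m$ in $L^1$, so the contraction semigroup is order-preserving, and it transfers to mild solutions since $u_{0,n} \le u_{0,n+1}$). Hence $u(t,x) := \lim_{n\to\infty} u_n(t,x)$ exists pointwise and is independent of the approximating sequence by a sandwiching argument. To upgrade this pointwise limit to a WDS one passes to the limit in the dual formulation \eqref{def_eq}: using the representation $(-\Delta_{\hn})^{-s}u_n = \int_{\hn}\GM(\cdot,y)u_n(y)\,\dg(y)$ together with the Green-function bounds \eqref{boundgreen} and the key estimates of Proposition \ref{crucial}, one controls $\|(-\Delta_{\hn})^{-s}u_n(t)\|$ in the relevant space and applies dominated/monotone convergence in both terms of \eqref{def_eq}; the monotonicity in time estimates of Propositions \ref{monotonePhi} and \ref{mon_phi1} give the uniform-in-$n$ bound $\|u_n(t)\|_{L^1_\Phi} \le C(t)\|u_{0,n}\|_{L^1_\Phi} \le C(t)\|u_0\|_{L^1_\Phi}$ needed for tightness and for the continuity $u \in C([0,T):L^1_\Phi(\hn))$. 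Uniqueness of the \emph{minimal} WDS is essentially by construction: any other WDS $v \ge 0$ with the same datum satisfies, after the same regularisation, $v \ge u_n$ for every $n$ (this is again the comparison principle, now comparing a general WDS with an $L^1$-mild solution, valid because WDS are in particular very weak solutions, cf.\ Remark \ref{Lp-stab-WDS}), hence $v \ge u$; thus $u$ is the pointwise-minimal element of the solution class, and the comparison principle within this class follows from its minimality together with the monotone dependence on data.

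The main obstacle I expect is the limit passage at the level of the weighted space rather than the a.e.\ limit, i.e.\ establishing $u \in C([0,T):L^1_\Phi(\hn))$ and that the limit genuinely satisfies the integral identity \eqref{def_eq} with finite terms. The delicate point is that $\Phi$ decays exponentially (like $r^{s-1}e^{-(N-1)r}$ or $(1+r)e^{-(N-1)r/2}$), so tightness in $L^1_\Phi$ is automatic once one has the uniform bound, but one must carefully verify that $(-\Delta_{\hn})^{-s}u_n \to (-\Delta_{\hn})^{-s}u$ in a topology strong enough for the first term of \eqref{def_eq}; this is precisely where Lemma \ref{Green-operator-estimate}, which controls $(-\Delta_{\hn})^{-s}\psi / \Phi$ in $L^\infty$ for test weights, and the fundamental estimates of Proposition \ref{crucial} do the work, reducing the convergence to an application of the dominated convergence theorem against the integrable dominating function $\Phi$. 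The remaining steps — order-preservation, the sandwich argument for well-definedness of the limit, and the identification of mild solutions with WDS — are supplied by the results quoted from Sections \ref{sect.semigroups} and 3.
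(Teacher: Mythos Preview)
Your overall architecture matches the paper's: truncate and cut off the datum to get $u_{0,n}\in L^1\cap L^\infty$, take the unique $L^1$-mild solutions $u_n$, use order-preservation to get a monotone sequence, pass to the pointwise limit, and verify the WDS identity termwise. Two points, however, deserve correction.

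\medskip

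\textbf{Role of the smoothing effects.} You plan to control the nonlinear term $\int_0^T\!\!\int u_n^m\,\psi$ and the regularity $u^m\in L^1((0,T):L^1_{\rm loc})$ via Proposition~\ref{crucial} and the monotonicity Propositions~\ref{monotonePhi}--\ref{mon_phi1}. But these only give $L^1_\Phi$-bounds on $u_n(t)$, not on $u_n(t)^m$. The paper closes this gap by first establishing the $L^\infty$ smoothing (Theorems~\ref{thm.smoothing.HN-like}, \ref{smoothPhi1}, \ref{smoothPhi}) for the mild approximants, so that $\|u(t)\|_{L^\infty}\le C\,t^{-\alpha}\|u_0\|_{L^1_\Phi}^\beta$ passes to the limit by lower semicontinuity; then $\int u^m\,\Phi\le \|u(t)\|_{L^\infty}^{m-1}\|u(t)\|_{L^1_\Phi}$, and the resulting power of $t$ is integrable near $0$. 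Without invoking the smoothing explicitly you have no a~priori reason why $u^m$ is locally integrable in spacetime, and the limit passage in the second term of \eqref{def_eq} is unjustified.

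\medskip

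\textbf{Uniqueness and minimality.} Here your route diverges from the paper and has a genuine gap. You propose to show that any WDS $v$ with the same datum satisfies $v\ge u_n$ by ``comparing a general WDS with an $L^1$-mild solution, valid because WDS are in particular very weak solutions''. No such comparison principle is available: Remark~\ref{Lp-stab-WDS} only says WDS are very weak, and the T-contraction in Lemma~\ref{WDS.comp+monot} applies to data in $L^1\cap L^\infty$, not to an arbitrary WDS with datum merely in $L^1_\Phi$. The paper does something more modest and more robust: it proves that the monotone limit is \emph{independent of the approximating sequence}. Given a second monotone sequence $v_{0,k}\uparrow u_0$ with $v_{0,k}\in L^1\cap L^\infty$, one applies the $L^1$ T-contraction \eqref{comparison.L1} between the two families of \emph{mild} solutions,
\[
\int_{\hn}\big(v_k(t)-u_n(t)\big)_+\,\dg\le \int_{\hn}\big(v_{0,k}-u_{0,n}\big)_+\,\dg,
\]
lets $n\to\infty$ (the right-hand side vanishes since $v_{0,k}\le u_0$), then $k\to\infty$, and symmetrises. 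This yields uniqueness of the \emph{construction}, which is exactly what the theorem asserts; it does not claim, and you should not try to prove, minimality among all WDS. Your sandwiching remark is the right idea, but it must be executed at the mild-solution level via \eqref{comparison.L1}, not by invoking a WDS-versus-mild comparison that has not been established.
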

\begin{rem}\textit{i}) \textit{Existence and uniqueness of signed (semigroup) solutions. }
\rm
When dealing with signed solutions, existence and uniqueness of mild (hence WDS) solutions follows from the nowadays standard theory of nonlinear contractive semigroups on Banach and Hilbert spaces. In the first case, the $L^1(\hn)$-theory of m-Accretive operators, developed by Benilan, Crandall, Pazy and Pierre provides existence, uniqueness and comparison, see \cite{BCr,BCPbook,CP} and Theorem \ref{BCPP-Thm}. In the Hilbertian setting, the (nowadays called Gradient Flow) theory by Brezis and Komura, gives the same results in  $H^{-s}(\hn)$, see \cite{Brezis1,BrezisBk,Komura} and Theorem \ref{BK-Thm}. More details will be provided in Section \ref{gradient}. \normalcolor

\noindent\textit{ii})\noindent\textit{Boundedness of solutions. }On one hand, showing that mild solutions are bounded is in general a difficult task, unless the initial datum is already bounded, since the a-priori regularity is too low. On the other hand, WDS turn out to be bounded, as we will see below. We show in Section \ref{sect.semigroups} that mild solutions are indeed WDS, hence bounded. Informally speaking, using WDS means to find appropriate weak solutions to the dual equation $(-\Delta_{\hn})^{-s}u_t = -u^m$: this is useful since it allows to extend the approach via Green functions estimates of \cite{BV1,BV2} to the present setting, and to prove quantitative boundedness of WDS, avoiding DeGiorgi-Nash-Moser iterations, which rely on integration by parts, not available in this non-local framework.

 \noindent\textit{iii})\noindent\textit{Initial data when $\Phi \neq 1$.} It's worth pointing out that, when $\Phi \neq 1$, the considered class of initial data is larger than $L^1(\hn)$. As a matter of example, consider the function $u_{\gamma}(r)=\frac{e^{-r(N-1)}}{r^{\gamma}}$ with $0 \leq \gamma <1$, where $r:=r(x,o)$ for a fixed $o \in \hn$. Clearly, $u_{\gamma} \not \in L^1(\hn)$ while $u_{\gamma}\in L^1_{\Phi}(\hn)$ for $\Phi=\Phi_1$ and therefore also for $\Phi \in \mathcal{W}$. To see this, since $u_{\gamma}$ is locally integrable and, by \eqref{bound}, $\Phi_1(x)\leq \beta$ for all $x \in \hn$, we only need to check that:
\begin{align}\label{wa-k}
\sup_{x_0\in \hn}  \int_{B_1^c(o)} |u_{\gamma}(x)|\,  (\Phi_1 \circ \tau_{-x_0})(x) \, \dgh(x) < +\infty  \, .
\end{align}
By \eqref{bound}, $(\Phi_1 \circ \tau_{-x_0})(x)\leq C e^{-\frac{(N-1) r(x, x_0)}{4}}$ for some $C>0$ independent from $x_0$, and we estimate
\begin{align*}
&\int_{B_1^c(o)} |u_{\gamma}(x)|\,  (\Phi_1 \circ \tau_{-x_0})(x) \, \dgh(x) \\
&\leq   \int_{ B_1^c(o) \cap B_{r(x_0,o)}(o)}  |u_{\gamma}(x)|\, C e^{-\frac{(N-1) [r(x_0,o)-r(x,o)]}{4}} \, \dgh(x)\\
&+   \int_{ B_1^c(o) \cap B_{r(x_0,o)}^c(o)}  |u_{\gamma}(x)|\, C e^{-\frac{(N-1) [r(o,x)-r(x_0,o)]}{4}} \, \dgh(x)\,.
\end{align*}
Taking $r(x_0,o)>1$ in the above, the other case being analogous, we get
\begin{align*}
&\int_{B_1^c(o)} |u_{\gamma}(x)|\,  (\Phi_1 \circ \tau_{-x_0})(x) \, \dgh(x) \\
&\leq   \frac{C}{2} e^{-\frac{(N-1) r(x_0,o)}{4}}  \int_1^{r(x_0,o)} \frac{e^{\frac{(N-1) r}{4}}}{r^{\gamma}}\,dr+ \frac{C}{2} e^{\frac{(N-1) r(x_0,o)}{4}}  \int_{r(x_0,o)}^{+\infty} \frac{e^{-\frac{(N-1) r}{4}}}{r^{\gamma}}\,dr \leq \frac{4C}{N-1}
\end{align*}
and \eqref{wa-k} follows.

We notice that the weights $\Phi \in \mathcal{W}$ allow for even larger data than $u_{\gamma}$. As a matter of example, the function $w_{\gamma}(r)=\frac{1}{r^{\gamma}}$ with $s < \gamma <1$, satisfies $w_{\gamma}\in L^1_{\Phi}(\hn)$ but $w_{\gamma}\not\in L^1_{\Phi_1}(\hn)$. We refer to \cite[Section 4]{BBGG} for a proof and for a discussion about the set $L^1_{\Phi}$ on general noncompact manifolds.   

\end{rem}

It is convenient to define the exponent
\begin{equation*}
\vartheta_1:=\frac{1}{2s+N(m-1)}\,
\end{equation*}
which appears in our $L^1-L^{\infty}$ smoothing estimates.

\begin{thm}\label{thm.smoothing.HN-like}
Let $u $ be   a WDS  corresponding to the nonnegative initial datum $u_0\in L^1(\hn)$. There exists $\ka_1=\ka_1(s,m,N)>0$ such that
\begin{equation}\label{thm.smoothing.HN-like.estimate}
\|u(t)\|_{L^\infty(\hn)} \leq  \ka_1\frac{\|u(t)\|^{2s \vartheta_1}_{L^1(\hn)}}{t^{N \vartheta_1}}\leq  \ka_1\frac{\|u_0\|^{2s \vartheta_1}_{L^1(\hn)}}{t^{N \vartheta_1}}\qquad \mbox{for all }t>0.
\end{equation}
Moreover, for $t\ge  e^{2(N-1)(m-1)}\|u_0\|_{L^1(\hn)}^{-(m-1)}$ there exists $\ka_2=\ka_2(s,m,N)>0$ such that
\begin{equation}\label{thm.smoothing.HN-like.estimate.2}
\|u(t)\|_{L^\infty(\hn)}  \leq \frac{\ka_2 }{t^{\frac{1}{m-1}}}\left[ \log\left(t \,\|u_0\|_{L^1(\hn)}^{m-1}\right)\right]^{\frac{s}{m-1}}  \,.
\end{equation}
\end{thm}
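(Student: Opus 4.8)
The strategy is the standard "weak dual solution" approach of \cite{BV1,BV2}, adapted to $\hn$: translate the $L^1$–$L^\infty$ smoothing into an estimate on the dual potential $(-\Delta_{\hn})^{-s}u(t)$, exploit the Green function bounds proved in the Appendix, and then optimize. First I would recall from Proposition \ref{crucial} the fundamental pointwise estimate for WDS, which should give (after integrating the equation in the dual formulation and using monotonicity in time of $t\mapsto t^{1/(m-1)}u(t)$, a Bénilan–Crandall type inequality) a bound of the form
\begin{equation*}
u(t,x)^m \le \frac{C}{t}\,(-\Delta_{\hn})^{-s}u(t)(x)\quad\text{a.e. }x\in\hn,
\end{equation*}
together with the $L^1$-contraction $\|u(t)\|_{L^1}\le\|u_0\|_{L^1}$ coming from the semigroup/mild solution construction of Theorem \ref{thm-existence}. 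Thus everything reduces to bounding $\|(-\Delta_{\hn})^{-s}u(t)\|_{L^\infty}$ in terms of $\|u(t)\|_{L^1}$ and $\|u(t)\|_{L^\infty}$ itself.

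The key input is the Green function estimate $\GM(x,y)\le C\,g(r(x,y))$, where $r$ is geodesic distance, with $g(r)\asymp r^{s-1}$ near $r=0$ (the Euclidean-type singularity, since $N+2s>2$ in the relevant range) and $g(r)\asymp r^{s-1}e^{-(N-1)r}$ for large $r$, as recorded in \eqref{Hyp.Green.HN0b} / \eqref{boundgreen}. Splitting the integral $(-\Delta_{\hn})^{-s}u(t)(x)=\int_{\hn}\GM(x,y)u(t,y)\,\dg(y)$ over a geodesic ball $B_\rho(x)$ and its complement, I would bound the inner part by $\|u(t)\|_{L^\infty}\int_{B_\rho}\GM(x,y)\,\dg(y)\le C\|u(t)\|_{L^\infty}\,\rho^{2s}$ (here the exponential decay of the volume element only helps, so the Euclidean-scaling $\rho^{2s}$ bound survives), and the outer part by $\sup_{r\ge\rho}g(r)\cdot\|u(t)\|_{L^1}\le C\rho^{s-1}e^{-(N-1)\rho}\|u(t)\|_{L^1}$. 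Combining,
\begin{equation*}
\|u(t)\|_{L^\infty}^m \le \frac{C}{t}\Big(\rho^{2s}\|u(t)\|_{L^\infty}+\rho^{s-1}e^{-(N-1)\rho}\|u(t)\|_{L^1}\Big)\quad\text{for every }\rho>0.
\end{equation*}

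From this one argument produces both estimates. For \eqref{thm.smoothing.HN-like.estimate}, in the regime of small/moderate $t$ one drops the exponential factor (bounding it by $1$, which is the lossy Euclidean-type estimate), balances the two terms by choosing $\rho$ so that $C\rho^{2s}/t=\tfrac12\|u(t)\|_{L^\infty}^{m-1}$, i.e. $\rho\sim (t\|u(t)\|_{L^\infty}^{m-1})^{1/2s}$, absorb the first term on the left, and solve the resulting algebraic inequality for $\|u(t)\|_{L^\infty}$; the exponents $2s\vartheta_1$ and $N\vartheta_1$ with $\vartheta_1=1/(2s+N(m-1))$ fall out exactly from this bookkeeping, and $\|u(t)\|_{L^1}\le\|u_0\|_{L^1}$ gives the second inequality. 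For \eqref{thm.smoothing.HN-like.estimate.2}, valid for large $t$, one instead keeps the exponential: here the first term is negligible and one gets $\|u(t)\|_{L^\infty}^m\le \tfrac{C}{t}\rho^{s-1}e^{-(N-1)\rho}\|u_0\|_{L^1}$; optimizing over $\rho$ (the minimum of $\rho^{s-1}e^{-(N-1)\rho}$ type considerations is not quite it — rather one picks $\rho$ of order $\tfrac{1}{m-1}\log(t\|u_0\|_{L^1}^{m-1})$ so that the exponential kills the polynomial growth in $t$), taking the $m$-th root and tracking the logarithmic factor yields the $t^{-1/(m-1)}[\log(t\|u_0\|_{L^1}^{m-1})]^{s/(m-1)}$ rate, with the stated threshold $t\ge e^{2(N-1)(m-1)}\|u_0\|_{L^1}^{-(m-1)}$ being exactly what is needed to make $\rho\ge R$ (so the large-$r$ Green bound applies) and to keep the logarithm positive.

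The main obstacle is establishing the pointwise inequality $u(t)^m\le \tfrac{C}{t}(-\Delta_{\hn})^{-s}u(t)$ rigorously at the level of WDS — this is where the fractional, nonlocal, and merely-weak nature of the solution bites, and it is precisely the content of Proposition \ref{crucial}; one must combine the time-monotonicity estimate (Bénilan–Crandall, proved via scaling for mild solutions and transferred to the minimal WDS by the monotone approximation of Theorem \ref{thm-existence}) with the dual formulation \eqref{def_eq} tested appropriately. Granting Proposition \ref{crucial} and the Green function bounds of the Appendix, the remainder is the elementary but careful two-term split and optimization sketched above; the only delicate book-keeping is ensuring the ball-integral bound $\int_{B_\rho}\GM(x,y)\,\dg(y)\lesssim\rho^{2s}$ holds uniformly in $x$ with the hyperbolic volume growth, which follows from the short-range $r^{s-1}$ behaviour of $\GM$ together with the fact that on $\hn$ small geodesic balls have at most Euclidean volume.
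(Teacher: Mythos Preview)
Your overall architecture --- fundamental pointwise estimate from Proposition~\ref{crucial}, split the potential over $B_\rho$ and its complement, optimize in $\rho$ --- matches the paper exactly. The gap is in the Green-function asymptotics you quote, and the errors propagate into the exponents of both estimates.

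First, near the diagonal the fractional Green function on $\hn$ behaves like $r^{2s-N}$ (the Euclidean Riesz kernel), \emph{not} $r^{s-1}$; see \eqref{Hyp.Green.HN0}. Consequently the outer-piece bound you need for \eqref{thm.smoothing.HN-like.estimate} is $\sup_{r\ge\rho}\GH\le C\rho^{2s-N}$ --- indeed $\GH(x,y)\le C\,r(x,y)^{-(N-2s)}$ holds \emph{globally}, cf.\ Corollary~\ref{cor-1} --- not $\rho^{s-1}$. With your exponent the optimization produces a rate depending only on $s,m$, whereas the claimed $\vartheta_1=1/(2s+N(m-1))$ is dimension-dependent; balancing $\rho^{2s}$ against $\rho^{s-1}$ gives the correct $\vartheta_1$ only when $N=s+1$, which never occurs for $N\ge3$. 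The paper uses precisely the global $r^{-(N-2s)}$ bound on the complement, and also retains the $\cosh(R)^{N-1}$ factor in the inner integral \eqref{Hyp.Green.HN1} (the hyperbolic volume element \emph{grows} exponentially, contrary to what you wrote): this is why the argument first yields \eqref{thm.smoothing.HN-like.estimate} only for $t\le\|u_0\|_{L^1}^{-(m-1)}$ and is then extended by time-shift.

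Second, for \eqref{thm.smoothing.HN-like.estimate.2} your inner-ball bound $\int_{B_\rho}\GH\le C\rho^{2s}$ is too crude when $\rho$ is large: once $\rho\ge2$ the exponential decay of $\GH$ exactly cancels the exponential volume growth and yields the sharper $\int_{B_\rho}\GH\le C\rho^{s}$, \eqref{Hyp.Green.HN2}. With $\rho^{2s}$ after Young's inequality your optimization would return the log-power $[\log(t\|u_0\|_{L^1}^{m-1})]^{2s/(m-1)}$, double the claimed exponent; the $\rho^s$ bound is what produces $[\log]^{s/(m-1)}$. (Your inner bound $\rho^{2s}$ is in any case inconsistent with your own short-range assertion $g\asymp r^{s-1}$, which would give $\rho^{N+s-1}$.) So the plan is sound, but you must feed in the correct two-regime Green estimates \eqref{Hyp.Green.HN0}--\eqref{Hyp.Green.HN2} of Lemma~\ref{lem.Green.HN.est}; with the asymptotics you wrote neither estimate comes out with the stated exponents.
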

%

A second smoothing effect can be shown in the larger class, namely for data belonging to $L^1_{\Phi_1}(\hn)$.

\begin{thm}\label{smoothPhi1}
Let $\Phi_1$ be a ground state of $-\Delta_{\hn}$ and let $u $ be a WDS   corresponding to the nonnegative initial datum $u_0\in L^1_{\Phi_1}(\hn)$.
There exists $\ka_3=\ka_3(s,m,N)>0$ such that
\begin{equation}\label{eq-k3}
\|u(t)\|_{L^\infty(\hn)} \leq  \ka_3\frac{\|u(t)\|^{2s \vartheta_1}_{L^1_{\Phi_1}(\hn)}}{t^{N \vartheta_1}}\leq  \ka_3\frac{\|u_0\|^{2s \vartheta_1}_{L^1_{\Phi_1}(\hn)}}{t^{N \vartheta_1}}\qquad \mbox{for all }t>0\,.
\end{equation}

Moreover, for $t\ge  e^{(m-1)(N-1)}\|u_0\|_{L^1_{\Phi_1}(\hn)}^{-(m-1)}$ there exists $\ka_4=\ka_4(s,m,N)>0$ such that
\begin{equation}\label{eq-k5}
\|u(t)\|_{L^{\infty}(\hn)} \leq \frac{\ka_5}{t^{\frac{1}{m-1}}} \, \left[ \log( t\, \|u_0\|_{L^1_{\Phi_1}(\hn)}^{m-1})\right]^{\frac{s}{m-1}}\,.
\end{equation}
\end{thm}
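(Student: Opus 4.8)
The plan is to run the weak-dual / Green-function scheme of Sections~3--4 --- the one behind Theorem~\ref{thm.smoothing.HN-like} --- with the generalised ground state $\Phi_1$ in the role that the Riemannian measure plays for $L^1$ data. First note that the second inequality in \eqref{eq-k3}, and the possibility of replacing $\|u(t)\|_{L^1_{\Phi_1}(\hn)}$ by $\|u_0\|_{L^1_{\Phi_1}(\hn)}$ in \eqref{eq-k5}, are exactly the weighted-norm monotonicity of Proposition~\ref{mon_phi1}: since $\Phi_1$ solves $(-\Delta_{\hn})^s\Phi_1=\Lambda^s\Phi_1$, testing \eqref{NFDE} against $\Phi_1$ and using self-adjointness gives $\frac{\rd}{\rd t}\int_{\hn}u\,\Phi_1\,\dg=-\Lambda^s\int_{\hn}u^m\,\Phi_1\,\dg\le 0$. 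It thus remains to prove the first inequality in \eqref{eq-k3} and the bound \eqref{eq-k5}.

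The engine is the pointwise a~priori estimate of Proposition~\ref{crucial} --- obtained via a B\'enilan--Crandall monotonicity argument on the dual equation $\partial_t(-\Delta_{\hn})^{-s}u=-u^m$ --- which bounds $u(t,x)^m$ by $C\,t^{-1}$ times the Green potential $\int_{\hn}\GM(x,y)\,u(t,y)\,\dg(y)$. I would estimate that potential by splitting $\hn$ into a geodesic ball $B_R(x)$ and its complement. On $B_R(x)$ one uses the local, Euclidean-type behaviour $\GM(x,y)\asymp d(x,y)^{-(N-2s)}$ (for $N>2s$; Appendix~\ref{s-Green-function}), getting a term $\lesssim\|u(t)\|_{L^\infty(\hn)}\,\omega(R)$ with $\omega(R)\asymp R^{2s}$ for small $R$ and $\omega(R)\asymp R^{s}$ for large $R$. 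On $\hn\setminus B_R(x)$ one writes $\GM(x,y)=\bigl(\GM(x,y)/\Phi_1(y)\bigr)\Phi_1(y)$ and combines the sharp exponential decay of $\GM$ from \eqref{boundgreen}/\eqref{Hyp.Green.HN0b} with $\Phi_1(y)\gtrsim e^{-(N-1)d(x_0,y)/2}$ (from \eqref{bound}) and the triangle inequality, getting a term $\lesssim g(R)\,\|u(t)\|_{L^1_{\Phi_1}(\hn)}$ with $g$ decaying in $R$. Altogether, for every $R>0$,
\[
\|u(t)\|_{L^\infty(\hn)}^{m}\ \le\ \frac{C}{t}\Bigl(\omega(R)\,\|u(t)\|_{L^\infty(\hn)}+g(R)\,\|u(t)\|_{L^1_{\Phi_1}(\hn)}\Bigr),
\]
so one absorbs the first term by Young's inequality and optimises in $R$. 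The small-$R$ (local-singularity) regime reproduces \eqref{eq-k3} with the exponents $2s\vartheta_1$, $N\vartheta_1$ of \eqref{teta1}; the large-$R$ regime forces $R\asymp\log\!\bigl(t\,\|u_0\|_{L^1_{\Phi_1}(\hn)}^{m-1}\bigr)$ --- the logarithmic spreading scale peculiar to $\hn$ --- and yields \eqref{eq-k5}, the factor $[\log(\cdot)]^{s/(m-1)}$ coming precisely from the polynomial prefactor $r^{s-1}$ in \eqref{boundgreen} raised to the power dictated by the Young step; the threshold $t\ge e^{(m-1)(N-1)}\|u_0\|_{L^1_{\Phi_1}(\hn)}^{-(m-1)}$ is just the requirement $R\gtrsim 1$.

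The step I expect to be the real obstacle is making the off-ball estimate \emph{uniform in $x$}: since $\Phi_1$ decays from the fixed pole $x_0$ while $\GM(x,\cdot)$ is singular at --- and decays from --- $x$, the crude comparison of $\GM(x,y)/\Phi_1(y)$ on $\{d(x,y)\ge R\}$ carries a spurious factor $e^{(N-1)d(x_0,x)/2}$ that no constant cut-off can remove. Handling it requires the comparisons of \eqref{boundgreen}, \eqref{bound} and Appendix~\ref{s-Green-function} in sharp form and in a coordinated way --- e.g. a point-dependent cut-off $R=R(x)$, and, structurally, the identity $(-\Delta_{\hn})^{-s}\Phi_1=\Lambda^{-s}\Phi_1$, which makes the ground-state-renormalised kernel $\GM(x,y)/(\Phi_1(x)\Phi_1(y))$ have total mass $\Lambda^{-s}$ against $\Phi_1^2\,\dg$, uniformly in $x$; this uniform bound is the quantitative imprint of the spectral gap of $\Delta_{\hn}$, and is what ultimately makes the long-time regime on $\hn$ behave as on a bounded Euclidean domain. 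What then remains is careful bookkeeping of the time powers so as to land exactly on $\vartheta_1$ and on the logarithmic correction --- finiteness of $\|u(t)\|_{L^\infty(\hn)}$ itself being secured, as usual, by first working with bounded, compactly supported data. As in Theorem~\ref{thm.smoothing.HN-like}, the formal manipulations above (Fubini, $\partial_t(-\Delta_{\hn})^{-s}u=-u^m$, the B\'enilan--Crandall inequality, the weighted monotonicity) are justified on the $L^1$-mild solutions of Section~\ref{sect.semigroups} and passed to the minimal WDS by the monotone approximation of Theorem~\ref{thm-existence}.
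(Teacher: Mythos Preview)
Your sketch is essentially the paper's own argument: start from the fundamental pointwise bound of Proposition~\ref{crucial} in the form \eqref{f2}, split the Green potential at radius $R$ about the evaluation point $x_0$, bound the inner piece via \eqref{Hyp.Green.HN1}--\eqref{Hyp.Green.HN2} plus Young (yielding \eqref{w-i1} and \eqref{w-i2}), bound the outer piece by writing $\GH(x_0,x)=\bigl(\GH(x_0,x)/\Phi_1(x)\bigr)\Phi_1(x)$ and using the pointwise ratio estimate \eqref{gphi}, invoke the weighted monotonicity of Proposition~\ref{mon_phi1}, and optimise in $R$ --- the small-$R$ branch giving \eqref{eq-k3}, the large-$R$ branch with $R=\tfrac{2}{N-1}\log\bigl(t^{1/(m-1)}\|u_0\|_{L^1_{\Phi_1}}\bigr)$ giving \eqref{eq-k5}, the threshold on $t$ being exactly the condition $R\ge 2$.

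On the obstacle you single out (the spurious factor $e^{(N-1)d(\text{pole},x_0)/2}$ when the pole of $\Phi_1$ differs from the evaluation point $x_0$): the paper does \emph{not} address it. In \eqref{gphi} the ratio $\GH(x_0,x)/\Phi_1(x)$ is bounded purely in terms of $r(x_0,x)$ --- which is literally what \eqref{bound} and Corollary~\ref{cor-1} give only when $\Phi_1$ is centred at $x_0$ --- and the supremum over $x_0$ is then taken without further comment to reach \eqref{final-i1}. So the spectral/ground-state-renormalisation machinery you outline goes beyond what the published proof actually does; your scheme matches the paper step for step, and your caveat points to a point the paper leaves implicit rather than to a gap in your own plan.
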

\begin{rem} It should be commented that, in the above Theorems, the bounds \eqref{thm.smoothing.HN-like.estimate}, \eqref{eq-k3} hold for all $t>0$ but are mainly significant for $t$ small, as for long time faster decay estimates are provided by \eqref{thm.smoothing.HN-like.estimate.2}, \eqref{eq-k5}. This is reminiscent of the situation valid in Euclidean \it bounded \rm domains, in which short and long time asymptotics of solutions to the porous medium equation (or even to the heat equation), fractional or not, are different, mainly due to the validity of the Poincar\'e inequality, a fact that holds on $\hn$ as well. Note that \eqref{thm.smoothing.HN-like.estimate} is identical to the corresponding Euclidean bound and is consistent with the Euclidean scaling. The bound \eqref{eq-k3} has the same time dependence but, being written in terms of a weighted norm, has no Euclidean analogue. That for large time a faster time decay overtakes the one valid in the Euclidean case might also informally be seen as an effect of negative curvature, that somehow increases the speed of propagation and produces a better decay for large time (recall the bounds \eqref{linear} in the linear case).
\end{rem}
We conclude by enlarging further the class of allowed initial data, i.e. by allowing them to be integrable w.r.t. weight in $\mathcal{W}$, namely having the tail of the fractional Green function.
\begin{thm}\label{smoothPhi}
Let $u $ be a WDS  corresponding to the nonnegative initial datum  $u_0\in L_{\Phi}^1(\hn)$, with $\Phi \in \mathcal{W}$ defined \normalcolor in \eqref{W}. There exist $\ka_5=\ka_5(s,m,N)>0$ such that 

\begin{equation}\label{eq-k1PHI1}
\|u(t)\|_{L^\infty(\hn)} \leq  \ka_5\frac{\|u(t)\|^{2s \vartheta_1}_{L^1_{\Phi}(\hn)}}{t^{N \vartheta_1}}\leq  \ka_5\frac{\|u_0\|^{2s \vartheta_1}_{L^1_{\Phi}(\hn)}}{t^{N \vartheta_1}}\qquad \mbox{for all } t\leq \|u_0\|_{ L^1_{\Phi}(\hn)}^{1-m}. 
\end{equation}
Moreover, there exists $\ka_6=\ka_6(s,m,N, \Phi)>0$ such that  for all $t  \geq \|u_0\|_{ L^1_{\Phi}(\hn)}^{1-m}$, there holds
\begin{equation}\label{eq-k1PHI2}
\|u(t)\|_{L^{\infty}(\hn)} \leq  \frac{\ka_6}{t^{1/m}}  \|u_0\|_{  L^1_{\Phi}(\hn)}^{1/m} \,.
\end{equation}
\end{thm}
\begin{rem}\label{rem.smooth.L1.H-s}\textit{About the space $L^1_{\Phi}(\hn)$ versus $H^{-s}(\hn)$. }The above smoothing effect shows that nonnegative WDS corresponding to data in $L_{\Phi}^1(\hn)$ are bounded. As we shall see in Remark \ref{rem.strong.H-s}, the gradient flow solutions with data in $H^{-s}(\hn)$, which exist and are unique, are also WDS, hence the above Theorem applies also to nonnegative gradient flow solutions and gives  for all $t\geq \|u_0\|_{ L^1_{\Phi}(\hn)}^{1-m}$: 
\begin{equation*}
\|u(t)\|_{L^{\infty}(\hn)} \leq  \frac{\ka_6}{t^{1/m}} \|u_0\|_{  L^1_{\Phi}(\hn)}^{1/m}
\leq  \frac{\ka_6\,c_\psi^{1/m}}{t^{1/m}} \|u_0\|_{H^{-s}(\hn)}^{1/m}\,,
\end{equation*}
where we have used inequality \eqref{L1-H-s.norms}, i.e. $\|u_0\|_{  L^1_{\Phi}(\hn)}\le c_\psi \|u_0\|_{H^{-s}(\hn)}$ with $c_\psi =\|\psi\|_{L^1_{\Phi}}<+\infty$. The latter inequality also shows that for nonnegative functions, $H^{-s}_+$ is contained in $L^1_{\Phi,+}(\hn)$.
\end{rem}

\section{Fundamental estimates for weak dual solutions (WDS)}

In this section we derive a priori estimates
of WDS in the spirit of \cite[Sections 5 and 6]{BV1} and \cite{BV2}. To this end, we will use  repeatedly  the next lemma.
%
%
%
%
%
\begin{lem}\label{lem.Green.HN.est}
There exists $\kappa_1=\kappa_1(N,s)>0$ such that for a.e. $x,y\in \hn$ we have
\begin{equation}\label{Hyp.Green.HN0}
 \GH(x,y)\le \kappa_1\frac{ 1}{r(x,y)^{N-2s}}
\end{equation}
where $r(\cdot, \cdot)$ is the geodesic distance of $\HH^N$. When $r(x,y)\ge 1$ (eventually by taking a bigger $\kappa_1$) we also have
\begin{equation}\label{Hyp.Green.HN0b}
\GH(x,y)\le \kappa_1 \frac{ e^{-(N-1) r(x, y)}}{r(x, y)^{1-s}}\,.
\end{equation}
As a consequence, for all $R>0$  and $y\in \hn$ we have
\begin{equation}\label{Hyp.Green.HN1}
\int_{B_R(y)}\GM(x,y)\dg(x)
\le  \overline{\overline{\kappa}}_1  R^{2s},
\end{equation}
and for all $R\ge 2$ and $y\in \hn$ we have
\begin{equation}\label{Hyp.Green.HN2}
\int_{B_R(y)}\GM(x,y)\dg(x)
\le \widetilde \kappa_1 R^s,
\end{equation}
 for some $ \overline{\overline{\kappa}}_1= \overline{\overline{\kappa}}_1(N,s, \kappa_1)>0$ and  $\widetilde \kappa_1=\widetilde \kappa_1(N,s, \kappa_1)>0$ .
\end{lem}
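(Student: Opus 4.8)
The plan is to reduce everything to the two pointwise bounds \eqref{Hyp.Green.HN0}--\eqref{Hyp.Green.HN0b}, which are the substantive part; granted these, the integral estimates \eqref{Hyp.Green.HN1}--\eqref{Hyp.Green.HN2} follow by an elementary integration in geodesic polar coordinates. To prove the pointwise bounds I would use the subordination representation: by \eqref{g1-semigroup}--\eqref{inverse} the kernel of $(-\Delta_{\hn})^{-s}$ is
\[
\GH(x,y)=\int_0^{+\infty}t^{s-1}\,K_{\hn}(t,x,y)\dt,
\]
where $K_{\hn}$ is the heat kernel of $\hn$, for which the classical sharp two-sided (Davies--Mandouvalos) estimate
\[
K_{\hn}(t,x,y)\asymp t^{-N/2}(1+r)(1+r+t)^{\frac{N-3}{2}}\,e^{-\frac{(N-1)^2}{4}t-\frac{N-1}{2}r-\frac{r^2}{4t}},\qquad r:=r(x,y),
\]
is available. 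For the short-range bound \eqref{Hyp.Green.HN0} I would split the $t$-integral at $t=r^2$: on $t\le r^2$ the Euclidean-type factor $t^{-N/2}e^{-r^2/4t}$ governs and the substitution $t=r^2\tau$ produces the local singularity $r^{2s-N}$ (note $N-2s>0$, since $N\ge2$ and $s<1$), while the tail $t\ge r^2$ is dominated by the same power thanks to the decay $e^{-(N-1)^2t/4}$. For the long-range bound \eqref{Hyp.Green.HN0b}, valid for $r\ge1$, I would pull out the factor $e^{-\frac{N-1}{2}r}$ and apply the Laplace (saddle point) method to the remaining integral $\int_0^{\infty}t^{s-1-N/2}(1+r+t)^{(N-3)/2}e^{-\frac{(N-1)^2}{4}t-\frac{r^2}{4t}}\dt$: the phase $\frac{(N-1)^2}{4}t+\frac{r^2}{4t}$ attains its minimum at $t_{\ast}=r/(N-1)$ with value $\frac{(N-1)r}{2}$, so that together with the pulled-out factor one recovers the full exponential rate $e^{-(N-1)r}$; collecting the powers of $r$ produced by $t_{\ast}^{s-1-N/2}$, by $(1+r)(1+r+t_{\ast})^{(N-3)/2}\sim r^{(N-1)/2}$, and by the Gaussian width $\sqrt{t_{\ast}}\sim r^{1/2}$ of the saddle, one gets exactly the correction $r^{s-1}$. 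After possibly enlarging $\kappa_1$, \eqref{Hyp.Green.HN0b} also implies \eqref{Hyp.Green.HN0} in the range $r\ge1$, since $r^{N-1-s}e^{-(N-1)r}$ is bounded on $[1,+\infty)$. (These are exactly the heat-kernel computations that I would collect in Appendix~\ref{s-Green-function}.)

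Granting \eqref{Hyp.Green.HN0}--\eqref{Hyp.Green.HN0b}, I would derive \eqref{Hyp.Green.HN1}--\eqref{Hyp.Green.HN2} by integrating in geodesic polar coordinates centred at $y$, where $\dg=(\sinh\rho)^{N-1}\,\rd\rho\,\rd\omega$ and $\rd\omega$ is the canonical measure on $S^{N-1}$ (whose total mass I absorb into $\kappa_1$). For \eqref{Hyp.Green.HN1} I would combine \eqref{Hyp.Green.HN0} with the elementary inequalities $\sinh\rho\le\rho\cosh\rho\le\rho\cosh R$ on $[0,R]$:
\[
\int_{B_R(y)}\GM(x,y)\,\dg(x)\le\kappa_1\int_0^R\frac{(\sinh\rho)^{N-1}}{\rho^{N-2s}}\,\rd\rho\le\kappa_1(\cosh R)^{N-1}\int_0^R\rho^{2s-1}\,\rd\rho=\frac{\kappa_1}{2s}(\cosh R)^{N-1}R^{2s}.
\]
For \eqref{Hyp.Green.HN2} with $R\ge2$ I would split $B_R(y)=B_1(y)\cup\bigl(B_R(y)\setminus B_1(y)\bigr)$: on $B_1(y)$, estimate \eqref{Hyp.Green.HN1} with $R=1$ bounds the integral by the constant $\frac{\kappa_1}{2s}\cosh(1)^{N-1}$, hence by $c(N,s)\,R^s$ because $R^s\ge2^s$; on the annulus, using \eqref{Hyp.Green.HN0b} and $\sinh\rho\le e^{\rho}/2$,
\[
\int_{B_R(y)\setminus B_1(y)}\GM(x,y)\,\dg(x)\le\frac{\kappa_1}{2^{N-1}}\int_1^R\rho^{s-1}\,\rd\rho\le\frac{\kappa_1}{2^{N-1}s}\,R^s.
\]
Adding the two contributions gives \eqref{Hyp.Green.HN2} with a constant $\widetilde\kappa_1=\widetilde\kappa_1(N,s,\kappa_1)>0$.

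The main obstacle will be the sharp long-range estimate \eqref{Hyp.Green.HN0b}: one has to extract from the subordination integral both the exact exponential rate $e^{-(N-1)r}$ \emph{and} the precise polynomial correction $r^{s-1}$, which calls for a careful Laplace-method analysis --- in particular, one must bound uniformly in $r\ge1$ the contribution of the part of the $t$-integral away from the saddle $t_{\ast}=r/(N-1)$, and control the (Davies--Mandouvalos) polynomial prefactors. This exponential decay is precisely what distinguishes the hyperbolic setting from the Euclidean one, and is what makes the global bound \eqref{Hyp.Green.HN2} grow only like $R^s$ rather than exponentially; by contrast, once \eqref{Hyp.Green.HN0}--\eqref{Hyp.Green.HN0b} are available, the passage to \eqref{Hyp.Green.HN1}--\eqref{Hyp.Green.HN2} is routine.
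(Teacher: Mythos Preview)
Your proposal is correct and follows essentially the same route as the paper. The pointwise bounds \eqref{Hyp.Green.HN0}--\eqref{Hyp.Green.HN0b} are obtained in Appendix~\ref{s-Green-function} via the subordination formula and the Davies--Mandouvalos heat-kernel estimates, with the short-range case handled by the scaling $\alpha=r^2/(4t)$ and the long-range case by the explicit substitution $z=\tfrac{r}{2\sqrt{t}}-\tfrac{(N-1)\sqrt{t}}{2}$, which is precisely your Laplace analysis around $t_\ast=r/(N-1)$; the integral estimates \eqref{Hyp.Green.HN1}--\eqref{Hyp.Green.HN2} are then derived exactly as you do, using $\sinh\rho\le\rho\cosh R$ for the first and splitting the ball (the paper cuts at $\rho=2$ rather than $\rho=1$, and writes the annular bound as $e^{-(N-1)\rho}(\sinh\rho)^{N-1}\le1$, both immaterial differences).
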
\normalcolor

\begin{proof}In order not to break the flow of the section we will pospone the (technical) proofs of \eqref{Hyp.Green.HN0} and \eqref{Hyp.Green.HN0b} to Appendix \ref{s-Green-function}, see Corollary \ref{cor-1}.   First we prove \eqref{Hyp.Green.HN1} for $0<R\leq 2$;  it follows from \eqref{Hyp.Green.HN0} by passing to radial coordinates and estimating the volume element by means of the elementary inequality:  $\sinh r\leq r\cosh \,R$ for all $0<r<R$. Indeed

\begin{align*}
\int_{B_R(y)}\GM(x,y)\dg(x) & \leq \kappa_1 \int_{0}^{R} \frac{(\sinh r)^{N-1}}{r^{N-2s}} \, {\rm d}r
= \kappa_1 \int_{0}^{R} \frac{r^{N-1}}{r^{N-2s}} \left(\frac{\sinh r}{r} \right)^{N-1}  \, {\rm d}r \\
& \leq \frac{\kappa_1}{2s}  (\cosh 2)^{N-1}  R^{2s}.
\end{align*}

As for \eqref{Hyp.Green.HN2} for $R\geq 2$, we estimate using the fact that $e^{-(N-1)r} (\sinh r)^{N-1} \leq 1,$
$$\int_{B_R(y)}\GM(x,y)\dgh(x)\le   \frac{\kappa_1}{2s} \cosh(2)^{N-1} 2^{2s}+ \kappa_1 \int_{2}^R\,\frac{ e^{-(N-1) r}}{r^{1-s}} \, (\sinh r)^{N-1}\, dr$$
$$\le \frac{\kappa_1}{s} \left[ \cosh(2)^{N-1} 2^{2s-1}-2^s +R^s \right]\leq \widetilde \kappa_1 \, R^s$$
for some $\widetilde \kappa_1=\widetilde \kappa_1(N,s, \kappa_1)>0$ .

 Finally, the proof of \eqref{Hyp.Green.HN1} for all $R>0$ follows by combining \eqref{Hyp.Green.HN1} for $0<R\leq 2$ and \eqref{Hyp.Green.HN2} for $R\geq 2$.  
 \end{proof}
We are now ready to state and prove what we call the fundamental pointwise formulae, that in the nonlinear case under study, play the role of the representation formula in the linear case.

Following the strategy of \cite{BV2}, it is convenient at this point to introduce a special class of WDS, useful to ensure that all the integral quantities in the proofs are finite. Indeed, in most of the proofs we will work with bounded integral solutions and then extend the result to general data by a standard limiting  process. To this end, we shall proceed initially by considering WDS having initial data in $L^1(\hn)\cap L^\infty(\hn)$.

%
%
%

%
%
\par
%
%

\begin{lem}[Comparison and time monotonicity for WDS]\label{WDS.comp+monot}
Let $u,v$ be WDS corresponding to $u_0, v_0\in L^1(\hn)\cap L^\infty(\hn)$. Then comparison holds, more precisely  the T-contraction property \eqref{T-contraction.2} holds, namely
\begin{equation}\label{comp.H-s}
\left\|\big(u(t)-v(t)\big)_+\right\|_{H^{-s}(\hn)}\le \left\|\big(u_0-v_0\big)_+\right\|_{H^{-s}(\hn)}\qquad\mbox{for all }t\ge 0\,.
\end{equation}
Let $u$ be a nonnegative WDS corresponding to $u_0\in L^1(\hn)\cap L^\infty(\hn)$. Then $u$ enjoys the following time monotonicity property:
\begin{equation}\label{mon-est}
\mbox{the map}\qquad t \mapsto t^{\frac{1}{m-1}} u(t,x) \qquad\mbox{is nondecreasing in $t > 0$ for a.e. $x \in \hn,$}
\end{equation}
and the following $L^p-$ stability property
\begin{equation}\label{eq-wm-12}
\|u(t)\|_{L^p(\hn)} \leq \|u_0\|_{L^p(\hn)} \qquad\mbox{for all $t\ge 0$ and all $1 \le p \leq \infty.$}
\end{equation}

\end{lem}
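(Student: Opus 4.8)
The plan is to deduce all three properties from the underlying nonlinear semigroup structure, exploiting the fact (proved in Section \ref{sect.semigroups}, which we may invoke once available) that for data in $L^1(\hn)\cap L^\infty(\hn)$ the WDS coincides with the mild/gradient-flow solution. For the comparison estimate \eqref{comp.H-s}, I would work in the Hilbertian framework: the FPME $\partial_t u=-(-\Delta_{\hn})^s(u^m)$ is the gradient flow in $H^{-s}(\hn)$ of the convex functional $u\mapsto \frac1{m+1}\int_{\hn}|u|^{m+1}\dg$, and the associated operator is T-accretive (T-monotone) in $H^{-s}$. The T-contraction property \eqref{T-contraction.2}, stated earlier, then gives exactly \eqref{comp.H-s}; alternatively one reproduces the classical Crandall--Liggett/Bénilan argument by testing the equation for $u-v$ against a smooth approximation of the sign of $(u-v)_+$ paired in the $H^{-s}$ duality, using monotonicity of $r\mapsto r^m$ to discard the nonlinear term. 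Since both $u_0,v_0$ lie in $L^1\cap L^\infty\subset H^{-s}$ (for nonnegative functions the embedding $H^{-s}_+\hookrightarrow L^1_{\Phi,+}$ runs the other way, but $L^1\cap L^\infty\subset H^{-s}$ directly by boundedness of $(-\Delta_{\hn})^{-s}$ on $L^2$ plus interpolation), the right-hand side is finite.

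For the time monotonicity \eqref{mon-est}, I would use the Bénilan--Crandall argument, which is the standard tool for homogeneous evolution equations: the map $u_0\mapsto u(t)$ is order-preserving (by the comparison just established, applied to nonnegative data), and the equation has the scaling invariance that if $u(t,x)$ solves it then so does $\lambda u(\lambda^{m-1}t,x)$ for $\lambda>0$. Differentiating the resulting inequality $u(t)\le \frac{\lambda}{\,\cdot\,}$ at $\lambda=1$ — more precisely, comparing $u$ at times $t$ and $\lambda^{m-1}t$ and letting $\lambda\uparrow1$ — yields $\partial_t u\ge -\frac{u}{(m-1)t}$ in the distributional sense, which is precisely the statement that $t\mapsto t^{1/(m-1)}u(t,x)$ is nondecreasing. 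Care must be taken to justify the differentiation and to transfer the pointwise a.e. statement from the smooth approximating solutions to $u$ itself; this is handled by the monotone $L^1$-convergence of approximations together with the fact that monotonicity in $t$ is preserved under pointwise limits.

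For the $L^p$-stability \eqref{eq-wm-12}, the case $p=1$ follows from the $L^1$-contraction of the semigroup (comparison plus mass control, or directly from T-accretivity in $L^1$), and the case $p=\infty$ follows from the comparison principle applied with the constant supersolution $v\equiv\|u_0\|_\infty$ (a constant is a stationary solution since $(-\Delta_{\hn})^s$ annihilates constants — strictly, one approximates by compactly supported data to make this rigorous on the noncompact $\hn$, or uses the contraction directly). The intermediate $1<p<\infty$ cases then follow by the standard Stampacchia/Moser-type computation: test the equation against $|u|^{p-2}u$ (justified for bounded integrable solutions), integrate by parts using the Bochner-type inequality $\int (-\Delta_{\hn})^s(u^m)\,|u|^{p-2}u\,\dg\ge 0$ which holds because $(-\Delta_{\hn})^s$ is a positive operator and $r\mapsto r^m$, $r\mapsto |r|^{p-2}r$ are both monotone with matching sign — equivalently, interpolate between $p=1$ and $p=\infty$ using the Riesz--Thorin-type bound valid for order-preserving $L^1$-contractions. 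The main obstacle I anticipate is not any single estimate but the bookkeeping needed to pass rigorously from the approximating sequence of nice solutions to the WDS $u$, ensuring every integral above is finite at each stage — this is exactly why the lemma is stated for data in $L^1(\hn)\cap L^\infty(\hn)$, and the general case is then recovered later by the approximation-from-below scheme described in the introduction.
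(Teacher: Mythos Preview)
Your proposal is correct and follows essentially the same route as the paper: the $H^{-s}$ T-contraction via the gradient-flow structure (the paper makes the formal computation explicit using Kato's inequality and the Poincar\'e embedding $L^2\hookrightarrow H^{-s}$ to ensure $(u_0-v_0)_+\in H^{-s}$), the time monotonicity via scaling plus comparison, and the $L^p$-stability via testing against $u^{p-1}$ and m-accretivity. The only notable difference is that for \eqref{mon-est} the paper avoids differentiation entirely by setting $\lambda=(t+h)/t$ and reading off $(t+h)^{\frac{1}{m-1}}u(t+h,x)\ge t^{\frac{1}{m-1}}u(t,x)$ directly from comparison, which is slightly cleaner than your route through the Crandall--B\'enilan inequality $\partial_t u\ge -u/((m-1)t)$; for \eqref{eq-wm-12} the paper takes $p\to\infty$ rather than invoking a constant supersolution.
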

%
%
%
%
%
\noindent {\bf Proof.~}This proof follows by the T-contraction in $H^{-s}$-spaces, that we adapt from an original proof of Brezis, we just emphasize the main key points, the rest being completely analogous to the $H^{-s}$-case, see more details in Section \ref{gradient}.
%
By the Poincar\'e inequality $\lambda_1\|f\|_{H^{-s}(\hn)}\le \|f\|_{L^2(\hn)}$. Then,  since $u_0, v_0\in L^1(\hn)\cap L^\infty(\hn)$, we have that $(u_0-v_0)_+ \in H^{-s}(\hn)$ and the following holds
\[\begin{split}
\frac{d}{dt}\frac{1}{2}&\left\|\big(u(t)-v(t)\big)_+\right\|_{H^{-s}(\hn)}^2\\
&=\int_{\hn} {\rm sign}^{+}\big(u(t)-v(t)\big)\,\big(\partial_tu(t)-\partial_tv(t)\big)\, (-\Delta_{\hn})^{-s}\big(u(t)-v(t)\big)_+ \dg\\
&=-\int_{\hn} {\rm sign}^{+}\big(u(t)-v(t)\big)\,\left[(-\Delta_{\hn})^{s}\big(u^m -v^m\big)  \right]\, (-\Delta_{\hn})^{-s}\big(u(t)-v(t)\big)_+ \dg\\
&=-\int_{\hn} {\rm sign}^{+}\big(u^m(t)-v^m(t)\big)\,\left[(-\Delta_{\hn})^{s}\big(u^m -v^m\big)  \right]\, (-\Delta_{\hn})^{-s}\big(u(t)-v(t)\big)_+ \dg\\
&\le-\int_{\hn} (-\Delta_{\hn})^{s}\big(u^m -v^m\big)_+   (-\Delta_{\hn})^{-s}\big(u(t)-v(t)\big)_+ \dg\\
&\le-\int_{\hn}  \big(u^m -v^m\big)_+    \big(u(t)-v(t)\big)_+ \dg\le 0
\end{split}
\]
which clearly implies \eqref{comp.H-s}. The inequality of line 4 follows by Kato's inequality ${\rm sign}^{+}\big(f\big)(-\Delta_{\hn})^{s}f\le (-\Delta_{\hn})^{s} (f)_+ $, which holds in the sense of distributions.

The above computations can be rigorously justified by the results of Theorem \ref{BK-Thm}, in which strong $H^{-s}$-solutions are constructed. When $u_0$ is sufficiently integrable (and $u_0\in L^1(\hn)\cap L^\infty(\hn)$ is more than enough in view of the above discussion) then the strong $H^{-s}$ solutions are the same as Weak Dual Solutions in the sense of Definition \ref{defi_WDS}, see Remark \ref{rem.strong.H-s}.

\noindent\textit{Time Monotonicity Estimates. }We show here for the reader's convenience a proof based on scaling and comparison that we have learned from V\'azquez \cite{V}. Consider the rescaled solution $u_\lambda(t,x)=\lambda^{\frac{1}{m-1}}u(\lambda t,x)$, then $u_\lambda(0,\cdot)=\lambda^{\frac{1}{m-1}}u_0$. Then, letting $\lambda=(t+h)/t\ge 1$, we obtain for all $h\ge 0$
\[
0\le  u_\lambda(t,x)-u(t,x) = \lambda^{\frac{1}{m-1}}u(\lambda t,x)-u(t,x)=\left( \frac{t+h}{t}\right)^{\frac{1}{m-1}}u(t+h,x)-u(t,x)
\]
which is equivalent to the desired monotonicity of the map $t \mapsto t^{\frac{1}{m-1}} u(t,x)$. The first inequality in the above formula, is true for all $t\ge0$ and a.e. $x\in \hn$ by comparison (which follows by the T-contraction inequality \eqref{comp.H-s}), since $\lambda \ge 1$ implies that $u_\lambda(0,\cdot)=\lambda^{\frac{1}{m-1}}u_0\ge u_0$.

\noindent\textit{$L^p$ stability of nonnegative WDS with $u_0\in L^1(\hn)\cap L^\infty(\hn)$. }A sketch of the proof of inequality \eqref{eq-wm-12} is given in Remark \ref{Lp-stab-WDS}. \qed

We are now ready to prove our main result of this section:\normalcolor

\begin{prop}\label{crucial}Let $u \geq 0$ be a WDS to problem \eqref{NFDE} with $u_0\in L^1(\hn)\cap L^\infty(\hn)$. Then,\normalcolor
\begin{equation}\label{estimate-1}
\int_{\hn} u(t, x)  \, \GM (x, x_0) \, \dg(x)
 \leq \int_{\hn} u_0(x) \,  \GM(x, x_0) \, \dg(x) \text{ for all } t>0
\end{equation}
and
\begin{align}\label{main-estimate-1}
 \left( \frac{t_0}{t_1} \right)^{\frac{m}{m-1}} (t_1 - t_0)\, u^m (t_0, x_0)
 &\leq
  \int_{\hn} \big[u(t_0, x)-u(t_1, x)\big] \, \GM(x_0, x) \, \dg(x)
  \notag  \\ &\leq (m-1) \frac{t^{\frac{m}{m-1}}}{t_0^{\frac{1}{m-1}}} \, u^m(t, x_0)\,,
\end{align}
for a. e. $0<t_0\leq t_1\leq t$ and a. e. $x_0\in \hn$.
\end{prop}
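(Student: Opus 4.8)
The plan is to work with the dual formulation directly. Write $U(t,x_0) := (-\Delta_{\hn})^{-s}u(t)(x_0) = \int_{\hn} u(t,x)\,\GM(x,x_0)\,\dg(x)$, so that the two quantities to be estimated are $U(t,x_0)$ and $U(t_0,x_0) - U(t_1,x_0)$. From the weak dual formulation \eqref{def_eq} (used with test functions approximating characteristic functions of time intervals, tensored with the fixed profile $\GM(\cdot,x_0)$, which is legitimate up to the usual density/truncation argument since $u_0\in L^1\cap L^\infty$ and the relevant integrals are finite by Lemma~\ref{lem.Green.HN.est} and Lemma~\ref{WDS.comp+monot}), one obtains the pointwise-in-time identity
\begin{equation*}
\frac{\rd}{\rd t} U(t,x_0) = -\, u^m(t,x_0)\qquad\text{for a.e. }t>0,\ \text{a.e. }x_0\in\hn .
\end{equation*}
In particular $t\mapsto U(t,x_0)$ is nonincreasing, which is precisely \eqref{estimate-1}, and integrating between $t_0$ and $t_1$ gives
\begin{equation*}
U(t_0,x_0) - U(t_1,x_0) = \int_{t_0}^{t_1} u^m(\tau,x_0)\,\rd\tau .
\end{equation*}

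It then remains to sandwich $\int_{t_0}^{t_1} u^m(\tau,x_0)\,\rd\tau$ using the time-monotonicity of Lemma~\ref{WDS.comp+monot}, i.e. that $\tau\mapsto \tau^{1/(m-1)}u(\tau,x_0)$ is nondecreasing, equivalently $\tau\mapsto \tau^{m/(m-1)}u^m(\tau,x_0)$ is nondecreasing. For the lower bound in \eqref{main-estimate-1}: for $\tau\in[t_0,t_1]$ we have $\tau^{m/(m-1)}u^m(\tau,x_0)\ge t_0^{m/(m-1)}u^m(t_0,x_0)$, hence $u^m(\tau,x_0)\ge (t_0/\tau)^{m/(m-1)}u^m(t_0,x_0)$, and integrating over $[t_0,t_1]$ while bounding $\tau\le t_1$ in the denominator yields
\begin{equation*}
\int_{t_0}^{t_1} u^m(\tau,x_0)\,\rd\tau \ \ge\ \left(\frac{t_0}{t_1}\right)^{\frac{m}{m-1}}(t_1-t_0)\,u^m(t_0,x_0),
\end{equation*}
which is the first inequality. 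For the upper bound: for $\tau\in[t_0,t_1]\subset[t_0,t]$ monotonicity gives $\tau^{m/(m-1)}u^m(\tau,x_0)\le t^{m/(m-1)}u^m(t,x_0)$, so $u^m(\tau,x_0)\le t^{m/(m-1)}u^m(t,x_0)\,\tau^{-m/(m-1)}$; integrating $\tau^{-m/(m-1)}$ over $[t_0,t_1]$ gives $(m-1)\big(t_0^{-1/(m-1)}-t_1^{-1/(m-1)}\big)\le (m-1)t_0^{-1/(m-1)}$, whence
\begin{equation*}
U(t_0,x_0)-U(t_1,x_0)=\int_{t_0}^{t_1} u^m(\tau,x_0)\,\rd\tau \ \le\ (m-1)\,\frac{t^{\frac{m}{m-1}}}{t_0^{\frac{1}{m-1}}}\,u^m(t,x_0),
\end{equation*}
as claimed.

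The step I expect to be the main obstacle is the rigorous justification of the pointwise identity $\partial_t U(t,x_0) = -u^m(t,x_0)$: one must show that $\GM(\cdot,x_0)$ can be used (via approximation by admissible test functions in $C^1_c(0,T;L^\infty_c(\hn))$) and that the resulting distributional identity in time upgrades to an a.e.-pointwise statement for a.e. $x_0$. This requires knowing $u^m\in L^1_{loc}((0,T)\times\hn)$ together with the integrability $\int_{\hn}\GM(\cdot,x_0)u(\tau,\cdot)\,\dg<\infty$ uniformly on compact time intervals, which follows from the WDS regularity class, the Green function bounds \eqref{Hyp.Green.HN1}--\eqref{Hyp.Green.HN2}, and the $L^p$-stability \eqref{eq-wm-12}; the truncation of $\GM(\cdot,x_0)$ to bounded compactly supported functions and the passage to the limit (monotone convergence, using $u\ge0$) are then routine. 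Everything else is elementary calculus once the time-monotonicity of Lemma~\ref{WDS.comp+monot} is in hand.
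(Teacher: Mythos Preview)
Your overall strategy---derive the pointwise dual identity $\partial_t U(t,x_0)=-u^m(t,x_0)$ and then sandwich $\int_{t_0}^{t_1}u^m(\tau,x_0)\,\rd\tau$ using the monotonicity of $\tau\mapsto\tau^{m/(m-1)}u^m(\tau,x_0)$---is exactly the paper's, and your calculus in the sandwich step is clean and correct.

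The gap is in the approximation step. You propose to plug into \eqref{def_eq} a test function of the form $\psi_1(t)\,\psi_2(x)$ with $\psi_2$ a truncation of $\GM(\cdot,x_0)$. But in \eqref{def_eq} the spatial profile $\psi_2$ is paired with $(-\Delta_{\hn})^{-s}u$ in the first term and with $u^m$ in the second: with $\psi_2\approx\GM(\cdot,x_0)$ you obtain
\[
\partial_t\Big[(-\Delta_{\hn})^{-2s}u(t,x_0)\Big]\;=\;-\,(-\Delta_{\hn})^{-s}u^m(t,x_0),
\]
not $\partial_t U(t,x_0)=-u^m(t,x_0)$. To land on $U(t,x_0)=(-\Delta_{\hn})^{-s}u(t,x_0)$ and on $u^m(t,x_0)$, the spatial profile must approximate $\delta_{x_0}$, since $(-\Delta_{\hn})^{-s}\delta_{x_0}=\GM(\cdot,x_0)$. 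This is precisely what the paper does: it takes $\psi_n^{(x_0)}=|B_{1/n}(x_0)|^{-1}\chi_{B_{1/n}(x_0)}\to\delta_{x_0}$, so that $(-\Delta_{\hn})^{-s}\psi_n^{(x_0)}\to\GM(\cdot,x_0)$ (in $L^q_{loc}$ for $q<\tfrac{N}{N-2s}$) gives $U(t,x_0)$ in the first term, and Lebesgue differentiation gives $u^m(t,x_0)$ in the second. The passage to the limit is then justified for a.e.\ Lebesgue point $x_0$, using the $L^p$-stability of Lemma~\ref{WDS.comp+monot} near $x_0$ and the $\Phi$-comparison of Lemma~\ref{Green-operator-estimate} at infinity. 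Once you swap ``truncate $\GM(\cdot,x_0)$'' for ``approximate $\delta_{x_0}$'', your outline coincides with the paper's proof.
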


\begin{proof}
We give the proof for $\Phi \in \mathcal{W}$. The case $\Phi=\Phi_1$ follows by noting that $L^{1}_{\Phi_1} (\hn)\subset L^{1}_{\Phi} (\hn)$ for all $\Phi \in \mathcal{W}$. The statement is proved in four steps.
\medskip

\noindent{\bf Step 1 :} For all non-negative $\psi$ such that  $\psi \in L^{\infty}_{c}(\hn)$, there holds
\begin{align}\label{limit-1}
&\int_{\hn} u(t_0, x) (- \Delta_{\hn})^{-s} (\psi(x)) \, \dg(x)
- \int_{\hn} u(t_1, x) (- \Delta_{\hn})^{-s} (\psi(x)) \, \dg(x)  \notag \\
=&  \int_{t_0}^{t_1} \int_{\hn} u^{m}(\tau, x) \psi(x) \, \dg(x) {\rm d}\tau\,.
\end{align}
The proof follows by adapting the ideas of \cite[Proposition~4.2-Step 1]{BV2}. We use the Definition~\ref{defi_WDS} of weak dual solution, with test function
 $\psi(t, x) = \psi_1(t) \psi_2(x),$ where $\psi_1(t) \in C^1_c(0, +\infty)$ and $\psi_2 \in L^{\infty}_{c} (\hn).$ Namely, we have that $u \in C([0, T) : L^{1}_{\Phi} (\hn))$, $u^m \in L^1\left( (0, T) : L^{1}_{loc}(\hn) \right)$, and the following identity holds
 \begin{align}\label{imp-identity}
& \int_0^{\infty} \psi_{1}^{\prime}(\tau) \int_{\hn} u(\tau, x) \, (-\Delta_{\hn})^{-s} \psi_2(x) \, \dg(x) \, {\rm d}\tau \notag \\
 &= \int_0^{\infty} \psi_{1}(\tau) \int_{\hn} \, u^m(\tau, x) \, \psi_2(x) \, \dg(x) \, {\rm d}\tau.
 \end{align}
 Now, to prove \eqref{limit-1}, we exploit the following approximate procedure. For  $0 \leq t_0 \leq t_1 < +\infty$ we consider the characteristic function $\chi_{[t_0, t_1]} (\tau)$. By standard approximation arguments, there exists $\psi_{1, n} \in C_c^{1}(0, + \infty)$ with ${\rm supp} (\psi_{1, n}) \subset \left[ t_0 - \frac{1}{n}, t_1 + \frac{1}{n} \right]$, $\chi_{[t_0, t_1]} \leq \psi_{1,n} \leq 1$ and such that $\psi_{1, n}  \rightarrow \chi_{[t_0, t_1]} (\tau) \text{ a.e. in } (0,+\infty)$. Since, by Remark \ref{remdef}, $\tau \mapsto \int_{\hn} u(\tau, x) \, (-\Delta_{\hn})^{-s} \psi_2(x) \, \dg(x) \in C^0([0,T))$, as $n \rightarrow \infty$, it follows that
\begin{align*}
&\int_0^{\infty} \psi_{1,n}^{\prime}(\tau) \int_{\hn} u(\tau, x) \, (-\Delta_{\hn})^{-s} \psi_2(x) \, \dg(x) \, {\rm d}\tau \\
&\xrightarrow[n \to \infty]{}\, \int_{\hn} \, u(t_0, x) (-\Delta_{\hn})^{-s} \psi_2(x) \, \dg(x)  - \int_{\hn} \, u(t_1, x) (-\Delta_{\hn})^{-s} \psi_2(x) \, \dg(x).
\end{align*}
 On the other hand, since
 $$\int_0^{\infty} \psi_{1,n}(\tau) \int_{\hn} \, u^m(\tau, x) \, \psi_2(x) \, \dg(x) \, {\rm d}\tau \leq \int_0^{t_1+1} \int_{\hn} \, u^m(\tau, x) \, \psi_2(x) \, \dg(x) \, {\rm d}\tau <+\infty$$ by Lebesgue dominated Theorem, as $n \rightarrow \infty$, we get:
 \begin{align*}
 \int_0^{\infty} \psi_{1,n}(\tau) \int_{\hn} \, u^m(\tau, x) \, \psi_2(x) \, \dg(x) \, {\rm d}\tau
 & \xrightarrow[n \to \infty]{} \, \int_{t_0}^{t_1} \int_{\hn} \,
 u^m(\tau, x) \, \psi_2(x) \, \dg(x) \, {\rm d}\tau\,.
 \end{align*}
Hence, \eqref{limit-1} follows by writing \eqref{imp-identity} with $\psi_1=\psi_{1,n}$ and passing to the limit.

\medskip

\noindent{\bf Step 2 :} From \eqref{limit-1} we derive estimate \eqref{estimate-1}.
Fix $x_0 \in \hn$ and consider the sequence of nonnegative test functions
$\psi_{n}^{(x_0)}(x):=\frac{1}{|B_{1/n}(x_0)|}\,\chi_{B_{1/n}(x_0)}(x)$ with $n>1$. Clearly,  $\psi^{(x_0)}_{n} \in L^{\infty}_{c}(\hn)$. Furthermore, $\psi_{n}^{(x_0)} \xrightarrow[n \to \infty]{}\delta_{x_0}$ in the sense of Radon measure.

\medskip

A direct application of Lebesgue differentiation theorem to the function $y\mapsto \GM(y, x)$ (without loss of generality we may
assume that $x_0$ is a Lebesgue point) implies
$$
(- \Delta_{\hn})^{-s}(\psi_{n}^{(x_0)})(x)=\frac{1}{|B_{1/n}(x_0)|} \int_{B_{1/n}(x_0)} \GM(y, x)\,\dg(y)\,
\xrightarrow[n \to \infty]{} \, \GM(x,x_0)
$$
as $n \rightarrow \infty$ and for a.e. $x\in \hn$.  Next we write
\begin{align}\label{step2lim}
&\left| \int_{\hn} u(\tau, x) (- \Delta_{\hn})^{-s} \psi_{n}^{(x_0)}(x) \,\dg(x)
-  \int_{\hn} u(\tau, x) \GM(x, x_0) \, \dg(x)\right| \\  \notag
& \leq  \left| \int_{B_R(x_0)} u(\tau, x)(- \Delta_{\hn})^{-s} \psi_{n}^{(x_0)}(x) \, \dg(x)
-  \int_{B_R(x_0)} u(\tau, x) \GM(x, x_0) \, \dg(x) \, \right| \\ \notag
& + \left| \, \int_{\hn \setminus B_R(x_0)}
u(\tau, x) (- \Delta_{\hn})^{-s} \psi_{n}^{(x_0)}(x) \, \dg(x)
-  \int_{\hn\setminus B_R(x_0)} u(\tau, x) \GM(x, x_0) \, \dg(x) \right|\,.
\end{align}
By Lemma \ref{lem.Green.HN.est}, $\GM( x_0, \cdot) \in L^q_{loc}(\hn)$ with $0 < q < \frac{N}{N -2s}$ and we
readily derive that
\begin{equation*}
(- \Delta_{\hn})^{-s}\psi_{n}^{(x_0)}(x) \xrightarrow[n \to \infty]{} \GM( x_0, x)
 \quad \mbox{in} \ L^{q}_{loc}(\hn)
\end{equation*}
for all $0 < q < \frac{N}{N -2s}$. Then, the first expression on the r.h.s. of \eqref{step2lim} can be estimated as follows
\begin{align*}
&\left| \int_{B_R(x_0)} u(\tau, x)(- \Delta_{\hn})^{-s}\psi_{n}^{(x_0)}(x) \, \dg(x)
-  \int_{B_R(x_0)} u(\tau, x) \GM(x, x_0) \, \dg(x)\right| \\
& \leq \|u(\tau)\|_{L^p(\hn)}
 \|(- \Delta_{\hn})^{-s} \psi_{n}^{(x_0)} - \GM(., x_0)\|_{L^{p}(B_R(x_0))}
\xrightarrow[n \to \infty]{} 0\,.
\end{align*}
Recall that  by assumption $u_0\in L^1(\hn)\cap L^\infty(\hn)$, hence $u(t)\in L^1(\hn)\cap L^\infty(\hn)$  for all $t > 0$. This is true since WDS are $L^p$-stable, see Lemma \ref{WDS.comp+monot} and also Remark \ref{Lp-stab-WDS}.  \normalcolor
%
\medskip

Now we tackle the second term on the r.h.s. of \eqref{step2lim} and we show that
$$
\left| \int_{\hn \setminus B_R(x_0)}
u(\tau, x) (- \Delta_{\hn})^{-s} \psi_{n}^{(x_0)}(x) \, \dg(x)
-  \int_{\hn\setminus B_R(x_0)} u(\tau, x) \GM(x, x_0) \, \dg(x) \right|
\xrightarrow[n \to \infty]{} 0 \,.
$$
To this aim, we estimate $ (- \Delta_{\hn})^{-s} \psi_{n}^{(x_0)}(x)$ for $r(x_0, x) \geq R.$ Without loss of generality, we may assume $R \geq 2.$ By repeating the proof of Lemma \ref{Green-operator-estimate},  it is readily deduced that there exists a positive constant $\overline C$ (not depending on $n$) such that
\begin{equation*}
  (-\Delta_{\hn})^{-s}\,\psi_{n}^{(x_0)}(x) \leq \overline C\, \GM(x_0, x)  \quad  \text{ for all } r(x_0,x)\geq  2\,.
\end{equation*}
Therefore, invoking again Lemma \ref{Green-operator-estimate} and using the fact $u (\tau, x) \in L^1_{\Phi}(\hn)$ for each fixed $\tau$, we deduce that
$$
u(\tau, x)(- \Delta_{\hn})^{-s} \psi_{n}^{(x_0)}(x) \leq  C\,  u(\tau, x) \Phi(x) \in L^1(\hn\setminus B_R(x_0))\,
$$
for a suitable $C>0$. Hence, by Lebesgue Theorem, it follows  that
$$
\int_{\hn \setminus B_R(x_0)}
u(\tau, x) (- \Delta_{\hn})^{-s} \psi_{n}^{(x_0)}(x) \, \dg(x)\xrightarrow[n \to \infty]{}
\int_{\hn\setminus B_R(x_0)} u(\tau, x) \GH(x, x_0) \, \dg(x)\,,
$$
Writing \eqref{limit-1} with $\psi=\psi_{n}^{(x_0)}$ and recalling that the right-hand side of \eqref{limit-1} is non-negative, we have established \eqref{estimate-1}.

\medskip

{\bf Step 3 :} We make use of the time monotonicity property \eqref{mon-est} \normalcolor to prove
\begin{align}\label{eq-main-estimate}
\left( \frac{t_0}{t_1} \right)^{\frac{m}{m-1}} \, (t_1 -t_0) \int_{\hn} u^m(t_0, x) \, \psi_2(x) \,\dg(x)
&\leq \int_{t_0}^{t_1} \int_{\hn} u^m(\tau, x) \, \psi_2(x) \, \dg(x) \, {\rm d}\tau \\
& \leq  \frac{m-1}{t_0^{\frac{1}{m-1}}} \, t^{\frac{m}{m-1}} \, \int_{\hn} u^m (t, x) \, \psi_2(x) \, \dg(x)\,, \notag
\end{align}
for a.e. $0\leq t_0\leq t_1 \leq t$ and all $\psi_2 \in L_c^{\infty}(\hn)$.

Now let us start by proving the upper estimates in \eqref{eq-main-estimate}. To this aim, we consider the sequence $\psi_{1, n} \in C_c^{\infty}(0, + \infty)$ defined in Step 1. Let us choose $n$ so large that
$0 \leq t_0 - \frac{1}{n} \leq t_1 + \frac{1}{n} \leq t, $  for all nonnegative $\psi_2 \in L_c^{\infty}(\hn)$ we have
\begin{align*}
& \int_0^{\infty} \psi_{1,n}(\tau) \int_{\hn} \, u^m(\tau, x) \, \psi_2(x) \, \dg(x) \, {\rm d}\tau\\
&\leq \int_{t_0 -\frac{1}{n}}^{t_1 + \frac{1}{n}}  \, \left( \frac{t}{\tau} \right)^{\frac{m}{m-1}}
\int_{\hn} u^m (t, x) \, \psi_2(x) \, \dg(x) \, {\rm d}\tau \\
& \leq  \,\left( \int_{t_0 -\frac{1}{n}}^{t_1 + \frac{1}{n}} \,
\left( \frac{t}{\tau} \right)^{\frac{m}{m-1}} \, {\rm d}\tau \right) \,\left( \int_{\hn} u^m(t, x) \, \psi_2(x) \, \dg(x) \right) \\
& \leq (m-1) \frac{1}{(t_0 - \frac{1}{n})^{\frac{1}{m-1}}} \, t^{\frac{m}{m-1}} \,
\int_{\hn} \, u^m(t, x) \, \psi_2(x) \, \dg(x).
\end{align*}
Now, letting $n \rightarrow \infty$, we get
$$
\int_{t_0}^{t_1} \int_{\hn} \, u^m(\tau, x) \, \psi_2(x) \, \dg(x) \, {\rm d}\tau
 \leq  \frac{(m-1)}{t_0^{\frac{1}{m-1}}} \, t^{\frac{m}{m-1}} \, \int_{\hn} u^m(t, x) \, \psi_2(x) \, \dg(x),
$$
for all $t_0 \leq t_1 \leq t.$ Hence, we obtain the upper bound estimates of \eqref{eq-main-estimate}. Following a similar approach we obtain the
left-hand side of \eqref{eq-main-estimate}, see \cite[Proposition~4.2-Step 3]{BV2} for more details.

{\bf Step 4 :}  The final conclusion, i.e. \eqref{main-estimate-1}, can be obtained by an approximation procedure again. Indeed, by combining and \eqref{limit-1} and \eqref{eq-main-estimate}, we know
\begin{align}\label{final}
&\left( \frac{t_0}{t_1} \right)^{\frac{m}{m-1}} \, (t_1 -t_0) \int_{\hn} u^m(t_0, x) \, \psi_{n}^{(x_0)}(x) \, \dg(x)\\
&\int_{\hn} u(t_0, x) (- \Delta_{\hn})^{-s} (\psi_{n}^{(x_0)}(x)) \, \dg(x)
- \int_{\hn} u(t_1, x) (- \Delta_{\hn})^{-s} (\psi_{n}^{(x_0)}(x)) \, \dg(x)  \notag \\
&\leq  \frac{m-1}{t_0^{\frac{1}{m-1}}} \, t^{\frac{m}{m-1}} \, \int_{\hn} u^m (t, x) \, \psi_{n}^{(x_0)}(x) \, \dg(x) \notag
\end{align}
where the sequence $\psi_{n}^{(x_0)}$ is as defined in Step 2. So, as before, as $n \rightarrow +\infty$, we have

$$
(- \Delta_{\hn})^{-s}(\psi_{n}^{(x_0)})(x)=\frac{1}{|B_{1/n}(x_0)|} \int_{B_{1/n}(x_0)} \GM(y, x)\,\dg(x)(y)\,
\longrightarrow \, \GM(x,x_0),
$$
provided $x_0$ is the Lebesgue point of the function $x \mapsto \GM(y, x)$. Furthermore, for all $\tau \geq 0$, as $n \rightarrow +\infty$, we have
$$ \int_{\hn} u^m(\tau, x) \, \psi_{2,n}^{(x_0)}(x) \, \dg(x)
= |B_{\frac{1}{n}}(x_0)|^{-1} \int_{B_{\frac{1}{n}}(x_0)} u^m(\tau, x) \, \dg(x) \longrightarrow  u^m(\tau,x_0)\,,
$$
provided $x_0$ is the Lebesgue point of the function $x \mapsto u^m(\tau, x).$ With no loss of generality we may
choose $x_0$ belonging to the Lebesgue point set for both functions.  With this information in hand, one can let $n \rightarrow \infty$
in \eqref{final} to obtain \eqref{main-estimate-1}. This completes the proof
of Proposition~\ref{crucial}.

\end{proof}

From step 1 in the  proof of the above proposition it readily follows the monotonicity result below which is of fundamental importance to get existence of solutions in $L^{1}_{\Phi}(\hn)$.
\begin{prop}\label{monotonePhi}
Let $u \geq 0$ be a WDS to problem \eqref{NFDE} with $u_0\in L^1(\hn)\cap L^\infty(\hn)$. Then for all $\psi \in C^{\infty}_{c} (\hn)$ with $\psi\geq 0$ there holds
\begin{align}\label{potential-monotonicity}
\int_{\hn} u(t_1, x) (- \Delta_{\hn})^{-s}(\psi) (x) \, \dg(x) \leq \int_{\hn} u(t_0, x) (- \Delta_{\hn})^{-s}(\psi)(x) \, \dg(x)
\end{align}
for all $t_1>t_0\geq 0$.
As a consequence, if $\Phi \in \mathcal{W}$, the class $\mathcal{W}$ being as in \eqref{W},  the $L^1_{\Phi}$- norm is monotonically non-increasing in time, i.e.
$$\|u(t_1)\|_{L^1_{\Phi}(\hn)}\leq \|u(t_0)\|_{L^1_{\Phi}(\hn)}$$
for all $t_1>t_0\geq 0$.

\end{prop}
\begin{proof}

By definition of $\mathcal{W}$, $(\Phi \circ \tau_{-x_0})(x)=(- \Delta_{\hn})^{-s}(\psi) (\tau_{-x_0}(x))$ for some $\psi \in C^{\infty}_{c} (\hn)$ with $\psi\geq 0$ and for any $x_0\in \hn$. On the other hand, by recalling that the dependence of the heat kernel from the spatial variables is radial, we have that $\GM(x,y)$ only depends on $r(x,y)$. From this it is readily deduced that 
$$(- \Delta_{\hn})^{-s}(\psi) (\tau_{-x_0}(x)) = (- \Delta_{\hn})^{-s}(\psi_{x_0}) (x)$$
where $(\psi_{x_0}) (x):= \psi (\tau_{-x_0}(x))$. Then, since $\psi_{x_0}\in C^{\infty}_{c} (\hn)$, from \eqref{potential-monotonicity} written for $\psi=\psi_{x_0}$ we obtain
\begin{align*}
\int_{\hn} u(t_1, x) (\Phi \circ \tau_{-x_0})(x) \, \dg(x) \leq \int_{\hn} u(t_0, x) (\Phi \circ \tau_{-x_0})(x) \, \dg(x)
 \end{align*}
 and the proof follows by taking the supremum over $x_0\in \hn$.
\end{proof}

Next, we get the monotonicity for solutions in $L^{1}_{\Phi_1}(\hn)$, $\Phi_1$ being a ground state of $-\Delta_{\hn}$.

\begin{prop}\label{mon_phi1}
Let $u \geq 0$ be a WDS to problem \eqref{NFDE} with $u_0\in L^1(\hn)\cap L^\infty(\hn)$. Let $\Phi_1$ be a ground state of $-\Delta_{\hn}$. Then  the $L^1_{\Phi_1}$- norm is monotonically non-increasing in time, i.e.

$$\|u(t_1)\|_{L^1_{\Phi_1}(\hn)}\leq \|u(t_0)\|_{L^1_{\Phi_1}(\hn)}$$


for all $t_1>t_0\geq 0$.
\end{prop}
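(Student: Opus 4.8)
The plan is to reduce the statement to the identity \eqref{limit-1} established in Step~1 of the proof of Proposition~\ref{crucial}, following the very same scheme as the proof of Proposition~\ref{monotonePhi}; the only new ingredient needed is an approximation of $\Phi_1$ by admissible test functions, since $\Phi_1$ does not belong to the class $\mathcal{W}$ and hence is not of the form $(-\Delta_{\hn})^{-s}\psi$ with $\psi\in C^\infty_c(\hn)$. The algebraic key is that $\Phi_1$ is an \emph{eigenfunction} of $(-\Delta_{\hn})^{-s}$: since $\Phi_1$ solves $(-\Delta_{\hn})^s\Phi_1=\Lambda^s\Phi_1$, inverting this relation --- equivalently, using the semigroup representation \eqref{g1-semigroup} together with the identity $e^{t\Delta_{\hn}}\Phi_1=e^{-\Lambda t}\Phi_1$, a standard property of the generalised ground state (spherical function) --- one obtains $(-\Delta_{\hn})^{-s}\Phi_1=c\,\Phi_1$ for some positive constant $c$ (in fact $c=\Lambda^{-s}$ up to the normalisation in \eqref{g1-semigroup}, but the value of $c$ will be irrelevant below). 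By \eqref{inverse} this also says $\int_{\hn}\GM(x,y)\Phi_1(y)\,\dg(y)=c\,\Phi_1(x)<\infty$ for a.e.\ $x\in\hn$, a finiteness which may alternatively be checked directly from the Green function bounds of Lemma~\ref{lem.Green.HN.est} and the decay \eqref{bound} of $\Phi_1$.

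First I would set up the approximation. Since $\Phi_1$ is bounded (by \eqref{bound}), the truncations $\psi_n:=\Phi_1\,\chi_{B_n(x_0)}$ lie in $L^\infty_c(\hn)$, are nonnegative, and increase pointwise to $\Phi_1$. Because the Green kernel $\GM$ is nonnegative, monotone convergence gives, for a.e.\ $x\in\hn$,
\[
(-\Delta_{\hn})^{-s}\psi_n(x)=\int_{\hn}\GM(x,y)\psi_n(y)\,\dg(y)\;\nearrow\;\int_{\hn}\GM(x,y)\Phi_1(y)\,\dg(y)=c\,\Phi_1(x)\,.
\]
Next I would apply the identity \eqref{limit-1} with test function $\psi=\psi_n\ge 0$: as $u\ge 0$ and $t_1>t_0$, the right-hand side $\int_{t_0}^{t_1}\!\int_{\hn}u^m\psi_n\,\dg\,{\rm d}\tau$ is nonnegative, whence
\[
\int_{\hn}u(t_1,x)(-\Delta_{\hn})^{-s}\psi_n(x)\,\dg(x)\le\int_{\hn}u(t_0,x)(-\Delta_{\hn})^{-s}\psi_n(x)\,\dg(x)\,.
\]
Finally I would let $n\to\infty$. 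By the $L^p$-stability of WDS (Lemma~\ref{WDS.comp+monot}) one has $u(t,\cdot)\in L^1(\hn)$ for every $t\ge 0$, and since $\Phi_1\in L^\infty(\hn)$ the integrals $\int_{\hn}u(t_i,x)\Phi_1(x)\,\dg(x)$ are finite; as $u(t_i,\cdot)\ge 0$ and $(-\Delta_{\hn})^{-s}\psi_n\nearrow c\,\Phi_1$, monotone convergence gives $\int_{\hn}u(t_i,x)(-\Delta_{\hn})^{-s}\psi_n(x)\,\dg(x)\to c\int_{\hn}u(t_i,x)\Phi_1(x)\,\dg(x)$ for $i=0,1$. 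Passing to the limit in the displayed inequality and cancelling the positive factor $c$ yields exactly the desired monotonicity $\int_{\hn}u(t_1,x)\Phi_1(x)\,\dg(x)\le\int_{\hn}u(t_0,x)\Phi_1(x)\,\dg(x)$.

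The main obstacle is the very first step, namely the rigorous justification that $\Phi_1$ --- which is \emph{not} square-integrable on $\hn$ --- is an eigenfunction of the nonlocal operator $(-\Delta_{\hn})^{-s}$ defined by functional calculus, i.e.\ that the integral in \eqref{g1-semigroup} (equivalently in \eqref{inverse}) converges and returns a positive multiple of $\Phi_1$. This can be handled either through the spectral/semigroup picture (the classical identity $e^{t\Delta_{\hn}}\Phi_1=e^{-\Lambda t}\Phi_1$ for the spherical function, together with $\int_0^\infty t^{s-1}e^{-\Lambda t}\,{\rm d}t=\Gamma(s)\Lambda^{-s}<\infty$), or by a direct estimate of $\int_{\hn}\GM(x,y)\Phi_1(y)\,\dg(y)$ based on Lemma~\ref{lem.Green.HN.est} and \eqref{bound} and the triangle inequality for the geodesic distance. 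Once this point is secured, the remaining argument is routine monotone-convergence bookkeeping, entirely parallel to the proof of Proposition~\ref{monotonePhi}.
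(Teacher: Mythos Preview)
Your proposal is correct and follows essentially the same route as the paper: both arguments take a monotone sequence $\psi_n\in L^\infty_c(\hn)$ increasing to $\Phi_1$, plug it into the identity \eqref{limit-1} from Step~1 of Proposition~\ref{crucial}, use the eigenfunction relation $(-\Delta_{\hn})^{-s}\Phi_1=\Lambda^{-s}\Phi_1$ to control $(-\Delta_{\hn})^{-s}\psi_n$, and pass to the limit. The only cosmetic difference is that the paper keeps the full identity (including the $\int_{t_0}^{t_1}\!\int u^m\psi_n$ term) through the limit---thereby obtaining the exact balance $\Lambda^{-s}\big(\int u(t_0)\Phi_1-\int u(t_1)\Phi_1\big)=\int_{t_0}^{t_1}\!\int u^m\Phi_1\ge 0$---whereas you drop the nonnegative right-hand side first and then pass to the limit via monotone convergence; both are perfectly valid.
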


\begin{proof}
 The proof follows from Step 1 in the proof of Proposition \ref{crucial}. To this aim, let $\psi_n\in L^{\infty}_c(\hn)$  be a sequence of nonnegative functions such that  $\psi_n \rightarrow \Phi_1 \circ \tau_{-x_0} $ a.e. in $\hn$ and such that  $\psi_n \leq \Phi_1 \circ \tau_{-x_0}$, we write \eqref{limit-1} with $\psi=\psi_n$:
\begin{align*}
&\int_{\hn} u(t_0, x) (- \Delta_{\hn})^{-s} (\psi_n(x)) \, \dgh(x)
- \int_{\hn} u(t_1, x)  (- \Delta_{\hn})^{-s} (\psi_n(x)) \, \dgh(x) \\
=&  \int_{t_0}^{t_1} \int_{\hn} u^{m}(\tau, x) \psi_n(x) \, \dgh(x) {\rm d}\tau\,.
\end{align*}
Since
$$\int_{\hn} u(t_0, x) (- \Delta_{\hn})^{-s} (\psi_n(x)) \, \dgh(x) \leq \Lambda^{-s }\int_{\hn} u(t_0, x)  (\Phi_1 \circ \tau_{-x_0})(x)  \, \dgh(x) <+\infty$$
and
$$\int_{t_0}^{t_1} \int_{\hn} u^{m}(\tau, x) \psi_n(x) \, \dgh(x) {\rm d}\tau\leq \int_{t_0}^{t_1} \int_{\hn} u^{m}(\tau, x) (\Phi_1 \circ \tau_{-x_0})(x)  \, \dgh(x) {\rm d}\tau<+\infty\,,$$
we may pass to the limit and get
\begin{align*}
&\Lambda^{-s }\int_{\hn} u(t_0, x) (\Phi_1 \circ \tau_{-x_0})(x)  \, \dgh(x)
-\Lambda^{-s } \int_{\hn} u(t_1, x)  (\Phi_1 \circ \tau_{-x_0})(x)  \, \dgh(x) \\
\leq &  \int_{t_0}^{t_1} \int_{\hn} u^{m}(\tau, x) (\Phi_1 \circ \tau_{-x_0})(x)  \, \dgh(x) {\rm d}\tau \geq 0\,.
\end{align*}
This completes the proof  by taking the supremum over $x_0\in \hn$.
Notice that in the above steps we have exploited the fact that, since $\GM(x,y)$ only depends on $r(x,y)$, $\Phi_1 \circ \tau_{-x_0}$ is still a ground state. 
\end{proof}


\section{Boundedness of WDS. Proof of the Smoothing Effects}\label{smooth}

In this section we prove the three different smoothing effects for   Weak Dual Solutions, namely the results of Theorems \ref{thm.smoothing.HN-like}, \ref{smoothPhi1} and \ref{smoothPhi}.  \normalcolor

\subsection{Proof of Theorem \ref{thm.smoothing.HN-like}}

Without loss of generality, we can assume $0\le u_0\in L^1(\hn)\cap L^\infty(\hn)$, we will explain how to remove these apparent restrictions at the end of the proof. First we recall that estimate \eqref{main-estimate-1} with  $t_1 = 2t_0$, immediately implies
\begin{align}\label{thm.smoothing.HN-like.proof.1}
u^m&(t_0, x_0) 
\leq \frac{2^{\frac{m}{m-1}}}{ t_0} \int_{\hn} u(t_0, x) \, \GM(x, x_0) \, \dg(x)\\
&= \underbrace{\frac{ 2^{\frac{m}{m-1}}}{ t_0}  \int_{B_{R}(x_0)}   u(t_0, x) \GM(x, x_0)  \dg(x)}_{(I)}
+  \underbrace{\frac{  2^{\frac{m}{m-1}}}{ t_0} \int_{\hn\setminus B_{R}(x_0)}  u(t_0, x) \GM(x, x_0)  \dg(x)}_{(II)}\nonumber
\end{align}
for some $R>0$.

\noindent  {\bf  Proof of \eqref{thm.smoothing.HN-like.estimate}.} We are going to use the Green function estimates \eqref{Hyp.Green.HN0} and  \eqref{Hyp.Green.HN1} of Lemma \ref{lem.Green.HN.est} to estimate   the two  terms of inequality \eqref{thm.smoothing.HN-like.proof.1}.\\
As for the first term:
\begin{align}\label{thm.smoothing.HN-like.proof.2}
(I)&\le \frac{ 2^{\frac{m}{m-1}}}{ t_0}\|u(t_0)\|_{L^\infty(\hn)}\int_{B_{R}(x_0)} \GM(x, x_0)  \dg(x)\nonumber\\
&\le \frac{1}{m}\|u(t_0)\|_{L^\infty(\hn)}^m+\frac{ c_m}{t_0^{\frac{m}{m-1}}}\left[\int_{B_{R}(x_0)} \GH(x, x_0)  \dgh(x)\right]^{\frac{m}{m-1}}\\
&\le \frac{1}{m}\|u(t_0)\|_{L^\infty(\hn)}^m+\frac{c_m}{t_0^{\frac{m}{m-1}}} \left(\overline{\overline{\kappa}}_1\right)^{\frac{m}{m-1}}  R^{\frac{2sm}{m-1}},\nonumber
\end{align}
where we have used Young's inequality $ab\le \frac{a^m}{m}+\frac{m-1}{m}b^{\frac{m}{m-1}}$ and the estimate \eqref{Hyp.Green.HN1}. The constant $c_m=\frac{m-1}{m}2^{\frac{m^2}{(m-1)^2}}$ only depends on $m>1$.

As for the second term, from the estimate \eqref{Hyp.Green.HN0}, we derive
\begin{align}\label{thm.smoothing.HN-like.proof.3}
(II)&\le \frac{2^{\frac{m}{m-1}}}{t_0} \int_{\hn\setminus B_{R}(x_0)}  u(t_0, x) \GM(x, x_0)  \dg(x) \nonumber \\
& \le \frac{2^{\frac{m}{m-1}}}{t_0} \int_{\hn\setminus B_{R}(x_0)} u(t_0, x) \, \frac{1}{(r(x, x_0))^{N-2s}} \, \dg(x)
\leq \frac{2^{\frac{m}{m-1}} \kappa_1}{t_0}  \frac{\|u(t_0, x)\|_{L^1(\hn)}}{R^{N-2s}}.
\end{align}
Combining inequalities \eqref{thm.smoothing.HN-like.proof.1}, \eqref{thm.smoothing.HN-like.proof.2} and \eqref{thm.smoothing.HN-like.proof.3} we obtain
\begin{align*}
u^m(t_0, x_0)
&\le  \frac{1}{m}\|u(t_0)\|_{L^\infty(\hn)}^m+\frac{c_m}{t_0^{\frac{m}{m-1}}} \left(\overline{\overline{\kappa}}_1\right)^{\frac{m}{m-1}}   R^{\frac{2sm}{m-1}}+\frac{2^{\frac{m}{m-1}} \kappa_1}{t_0} \frac{\|u(t_0)\|_{L^1(\hn)}}{R^{N-2s}}\\
&\leq \frac{1}{m}\|u(t_0)\|_{L^\infty(\hn)}^m+ \kappa_4 \frac{R^{\frac{2sm}{m-1}}}{t_0^{\frac{m}{m-1}}} 
\left[ 1 + \frac{t_0^{\frac{1}{m-1}}\|u(t_0)\|_{L^1(\hn)}}{R^{\frac{1}{(m-1)\vartheta_1}}}\right]\nonumber
\end{align*}
where $\kappa_4=\kappa_4(s,m, \kappa_1)$. By taking the supremum in $x_0\in \hn$ we obtain
\begin{align}\label{thm.smoothing.HN-like.proof.5}
\|u(t_0)\|_{L^\infty(\hn)}^m \le \frac{m \, \kappa_4}{m-1}\, \frac{R^{\frac{2sm}{m-1}}}{t_0^{\frac{m}{m-1}}} \left[ 1 + \frac{t_0^{\frac{1}{m-1}}\|u(t_0)\|_{L^1(\hn)}}{R^{\frac{1}{(m-1)\vartheta_1}}}\right]\,.
\end{align}
Choosing now $R=\left(t_0^{\frac{1}{m-1}}\|u(t_0)\|_{L^1(\hn)}\right)^{(m-1)\vartheta_1}$, inequality \eqref{thm.smoothing.HN-like.proof.5} gives
\begin{align*}
\|u(t_0)\|_{L^\infty(\hn)}^m \le \frac{  2\,  m \,\kappa_4}{m-1}\,\frac{\|u(t_0)\|_{L^1(\hn)}^{2s m\vartheta_1}}{t_0^{Nm\vartheta_1}}\,,
\end{align*}
which proves inequality \eqref{thm.smoothing.HN-like.estimate}  for all $t_0>0$,  recalling that the $L^1$-norm is decreasing in time, $\|u(t_0)\|_{L^1(\hn)}\le \|u_0\|_{L^1(\hn)}$. As for the constant
\[
\ka_1^m= \frac{  2\,  m \,\kappa_4}{m-1}\,\left[1+ \cosh(1)^{\frac{m(N-1)}{m-1}}\right]\,.
\]
 
Once we have proven the estimate \eqref{thm.smoothing.HN-like.estimate} for $0\le u_0\in L^1(\hn)\cap L^\infty(\hn)$, it is easy to see that this can be extended to all $0\le u_0\in L^1(\hn)$. Indeed, given $0\le u_0\in L^1(\hn)$ consider a sequence of data $0\le u_{0,n}\in L^1(\hn)\cap L^\infty(\hn)$ converging pointwise monotonically from below (consider for instance $u_{0,n}=u_0\wedge n$) to $u_0$ and also in the strong $L^1(\hn)$ topology. Let $u_n(t) \in L^1(\hn)\cap L^\infty(\hn)$ be the corresponding solutions. Then it is clear that $u_n(t)$ converges to $u(t)$ for any fixed $t$, where $u(t)$ is the solution corresponding to $u_0$. Moreover, by lower semicontinuity of the $L^\infty$ norm and applying estimate \eqref{thm.smoothing.HN-like.estimate} to $u_n$, we conclude
\[
\|u(t)\|_{L^\infty(\hn)}\le \liminf_{n\to\infty}\|u_n(t)\|_{L^\infty(\hn)}\le \ka_0  \liminf_{n\to\infty}\frac{\|u_{0,n}\|_{L^1(\hn)}^{2s\theta_1}}{t^{N\theta_1}}= \ka_0  \frac{\|u_0\|_{L^1(\hn)}^{2s\theta_1}}{t^{N\theta_1}}.
\]

\noindent  {\bf  Proof of \eqref{thm.smoothing.HN-like.estimate.2}.} We are going to use the Green function estimates \eqref{Hyp.Green.HN0b} and  \eqref{Hyp.Green.HN2} of Lemma \ref{lem.Green.HN.est} to estimate   the two  terms of inequality \eqref{thm.smoothing.HN-like.proof.1}, when $R\ge 2$. As for the first term:
\begin{align}\label{thm.smoothing.HN-like.proof.6}
(I)&\le \frac{ 2^{\frac{m}{m-1}}}{ t_0}\|u(t_0)\|_{L^\infty(\hn)}\int_{B_{R}(x_0)} \GM(x, x_0)  \dg(x)\nonumber\\
&\le \frac{1}{m}\|u(t_0)\|_{L^\infty(\hn)}^m+\frac{ c_m}{t_0^{\frac{m}{m-1}}}\left[\int_{B_{R}(x_0)} \GH(x, x_0)  \dgh(x)\right]^{\frac{m}{m-1}}\\
&\le \frac{1}{m}\|u(t_0)\|_{L^\infty(\hn)}^m+\frac{c_m}{t_0^{\frac{m}{m-1}}}(\widetilde \kappa_1)^{\frac{m}{m-1}} R^{\frac{sm}{m-1}},\nonumber
\end{align}
where we have used Young's inequality $ab\le \frac{a^m}{m}+\frac{m-1}{m}b^{\frac{m}{m-1}}$ and the estimate \eqref{Hyp.Green.HN2}. The constant $c_m=\frac{m-1}{m}2^{\frac{m^2}{(m-1)^2}}$ only depends on $m>1$.

As for the second term:
\begin{align}\label{thm.smoothing.HN-like.proof.7}
 (II)&\le \frac{2^{\frac{m}{m-1}}}{t_0} \int_{\hn\setminus B_{R}(x_0)}  u(t_0, x) \GM(x, x_0)  \dg(x) \nonumber\\
 & \le \frac{\kappa_1 2^{\frac{m}{m-1}}}{t_0} \int_{\hn \setminus B_{R}(x_0)} \frac{ u(t_0, x) }{r(x,x_0)^{1-s}}  e^{-(N-1) r(x, x_0)}  \dg(x) \nonumber\\
&\le \frac{\kappa_1 2^{\frac{m}{m-1}}}{t_0}   \frac{\|u(t_0)\|_{L^1(\hn)}}{R^{1-s}e^{(N-1) R}},
\end{align}
where we have used  the estimate \eqref{Hyp.Green.HN0b}. Combining inequalities \eqref{thm.smoothing.HN-like.proof.1}, \eqref{thm.smoothing.HN-like.proof.6} and \eqref{thm.smoothing.HN-like.proof.7} we obtain
\begin{align*}
u^m(t_0, x_0)
&\le  \frac{1}{m}\|u(t_0)\|_{L^\infty(\hn)}^m+\frac{c_m}{t_0^{\frac{m}{m-1}}}(\widetilde \kappa_1)^{\frac{m}{m-1}} R^{\frac{sm}{m-1}}+ \frac{\kappa_1 2^{\frac{m}{m-1}}}{t_0} \frac{\|u(t_0)\|_{L^1(\hn)}}{R^{1-s}e^{(N-1) R}}\\
&=\frac{1}{m}\|u(t_0)\|_{L^\infty(\hn)}^m+ \kappa_5\,\frac{R^{\frac{sm}{m-1}}}{t_0^{\frac{m}{m-1}}}
\left[1+ \frac{t_0^{\frac{1}{m-1}}\|u(t_0)\|_{L^1(\hn)}}{R^{1-s+\frac{sm}{m-1}}e^{(N-1) R}}\right]\nonumber
\end{align*}
where $\kappa_5=\kappa_5(s,m, \kappa_1)$.
By taking the supremum in $x_0\in \hn$, recalling that we are assuming $R\ge 2$ and that $\|u(t_0)\|_{L^1(\hn)}\le \|u_0\|_{L^1(\hn)}$, we obtain
\begin{align*}
\|u(t_0)\|_{L^\infty(\hn)}^m \le \frac{m \, \kappa_5}{m-1}\,\frac{R^{\frac{sm}{m-1}}}{t_0^{\frac{m}{m-1}}}
\left[1+ \frac{t_0^{\frac{1}{m-1}}\|u_0\|_{L^1(\hn)}}{2^{1-s+\frac{sm}{m-1}}e^{(N-1) R}}\right]\,.
\end{align*}
Choosing now $R=\frac{1}{N-1}\log\left(t_0^{\frac{1}{m-1}}\|u_0\|_{L^1(\hn)}\right)$, the above inequality gives
\begin{align*}
\|u(t_0)\|_{L^\infty(\hn)}^m \le
\frac{\ka_2^m}{t_0^{\frac{m}{m-1}}}\left[ \log\left(t_0 \,\|u_0\|_{L^1(\hn)}^{m-1}\right)\right]^{\frac{sm}{m-1}}
\end{align*}
with constant
\begin{align*}
\ka_2^m:=\frac{m \, \kappa_5}{m-1}\,\left[1+ \frac{1}{2^{1+\frac{s}{m-1}}}\right]\left(\frac{1}{(N-1)(m-1)}\right)^{\frac{sm}{m-1}}.
\end{align*}
We finally remark that the assumption $R\ge 2$ and our choice of $R$, restrict the validity of the above inequality to large times, namely we have
\[
R\ge 2\qquad\mbox{if}\qquad t_0\ge e^{2(N-1)(m-1)}\|u_0\|_{L^1(\hn)}^{-(m-1)}.
\]
We have thus obtained \eqref{thm.smoothing.HN-like.estimate.2} and the proof is concluded.

\subsection{Proof of Theorem \ref{smoothPhi1}}

Let us start with the fundamental estimates :
\begin{align}\label{f2}
&u^m(t_0, x_0)
\leq \frac{2^{\frac{m}{m-1}}}{ t_0} \int_{\hn} u(t_0, x) \, \GH(x, x_0) \, \dgh(x)\\ \nonumber
 =&  \underbrace{\frac{2^{\frac{m}{m-1}}}{t_0} \int_{B_{R}(x_0)} u(t_0, x) \,  \GH(x, x_0) \,  \dgh(x)}_{ (I)}
+  \underbrace{\frac{2^{\frac{m}{m-1}}}{t_0} \int_{\hn \setminus B_{R}(x_0)} u(t_0, x) \, \GH(x, x_0) \,  \dgh(x)}_{(II)}.
\end{align}

The estimate for $(I)$ works exactly in the proof of Theorem \ref{thm.smoothing.HN-like} i.e., for all $R>0$ we have
\begin{equation}\label{w-i1}
(I) \leq \frac{1}{m}\|u(t_0)\|_{L^\infty(\hn)}^m+\frac{c_m}{t_0^{\frac{m}{m-1}}} \left(\overline{\overline{\kappa}}_1\right)^{\frac{m}{m-1}}  R^{\frac{2sm}{m-1}}
\end{equation}
where $c_m=\frac{m}{m-1}2^{\frac{m^2}{(m-1)^2}}$ and  $\overline{\overline{\kappa}}_1$  is as in Lemma \ref{lem.Green.HN.est}. Instead, if $R\geq 2$, as in \eqref{thm.smoothing.HN-like.proof.6} we get
\begin{align}\label{w-i2}
(I)\le \frac{1}{m}\|u(t_0)\|_{L^\infty(\hn)}^m+\frac{c_m}{t_0^{\frac{m}{m-1}}}(\widetilde \kappa_1)^{\frac{m}{m-1}} R^{\frac{sm}{m-1}}\,
\end{align}
with $\widetilde \kappa_1$ defined in \eqref{Hyp.Green.HN2}.

 {\bf  Proof of \eqref{eq-k3}
.}
First, combining \eqref{bound} with Corollary \ref{cor-1}, we observe that

\begin{equation}\label{gphi}
 \frac{\GH(x_0, x)}{(\Phi_1 \circ \tau_{-x_0})(x)} \leq
 \left \{ \begin{array}{ll}
 \frac{\overline C_3\,\alpha^{-1}}{(r(x_0, x))^{N-2s}} \frac{e^{\frac{(N-1)}{2} r(x_0, x)}}{ (1+r(x_0, x))} &  \quad \text{for } r(x_0, x) \leq 1\,, \\
 \frac{\overline C_4\, \alpha^{-1} \, e^{-\frac{(N-1)}{2}r(x_0, x)}}{r(x_0, x)^{1-s}(1+ r(x_0, x))} & \quad \text{for }  r(x_0, x) \geq 1\,,
\end{array}
\right.
\end{equation}
 where we have exploited the fact that $\Phi_1(x)=\Phi_1(r(x,o))$ and $r(\tau_{-x_0}(x),o)=r(x,x_0)$.  Hence, by the fact that $e^{-\frac{(N-1)r}{2}}/ r^{-N+s+1}$ is bounded for $r>1$, we infer that
\begin{equation*}
 \frac{\GH(x_0, x)}{(\Phi_1 \circ \tau_{-x_0})(x)} \leq \frac{\kappa_6}{(r(x_0, x))^{N-2s}(1+r(x_0, x))} \quad \text{for all }r(x_0, x) >0\,
\end{equation*}
for some $\kappa_6=\kappa_6(s,m, \kappa_1)$.
Using the above $(II)$ can be estimated as follows
\begin{equation*}
(II) \leq   \frac{2^{\frac{m}{m-1}}\, \kappa_6}{t_0R^{N-2s}(1+R)} \|u(t_0)\|_{L^1_{\Phi_1}(\hn)} \,.
\end{equation*}

 Now, by \eqref{w-i1} and Young's inequality we obtain
\begin{align}\label{final-i1}
\|u(t_0)\|^m_{L^{\infty}(\hn)} \le \frac{m \, \kappa_7}{m-1}\, \frac{R^{\frac{2sm}{m-1}}}{t_0^{\frac{m}{m-1}}} \left[ 1 + \frac{t_0^{\frac{1}{m-1}} 
  \|u(t_0)\|_{L^1_{\Phi_1}(\hn)}}{R^{\frac{1}{(m-1)\vartheta_1}}(1+R)}\right]
\end{align}
where $\kappa_7=\kappa_7(s,m, \kappa_1)$. If we estimate the r.h.s. of \eqref{final-i1} by using $1+R>1$, then the same argument adopted for estimating \eqref{thm.smoothing.HN-like.proof.5},  recalling that from Proposition \ref{mon_phi1} the $L^1_{\Phi_1}$-norm is non increasing in time,  yields \eqref{eq-k3}
with \[
\ka_3^m:= \frac{ 2\,  m \,\kappa_7}{m-1}\,.
\]

\medskip

 {\bf  Proof of \eqref{eq-k5}. }  Take $R\geq 2 $ in \eqref{f2}, by \eqref{gphi} we have
\begin{align}\label{w-i3}
(II)&=\frac{2^{\frac{m}{m-1}}}{t_0}\, \int_{\hn \setminus B_{R}(x_0)}  \frac{\GH(x, x_0)}{(\Phi_1 \circ \tau_{-x_0})(x)} \, u(t_0,x)\, (\Phi_1 \circ \tau_{-x_0})(x) \, \dgh(x)  \notag \\
 & \leq \frac{2^{\frac{m}{m-1}}\overline C_4 }{\alpha\,t_0} \, \frac{e^{-\frac{(N-1)}{2}R}}{R^{2-s} } \,
  \|u_0\|_{L^1_{\Phi_1}(\hn)}\,.
\end{align}
where for the last estimate we have exploited the fact that the $L^1_{\Phi_1}$-norm is decreasing in time which follows from Proposition \ref{mon_phi1}. Therefore, by \eqref{w-i2} and \eqref{w-i3}, we conclude that
\begin{align}\label{w-i4}
\|u(t_0)\|^m_{L^{\infty}(\hn)}  \leq \frac{m\, \kappa_8}{m-1} \frac{R^{\frac{sm}{m-1}}}{t_0^{\frac{m}{m-1}}} \left[ 1+
 \frac{e^{-\frac{(N-1)}{2}R}}{R^{2+\frac{s}{m-1}}}\,t_0^{\frac{1}{m-1}}\, \|u_0\|_{L^1_{\Phi_1}(\hn)} \right]\,,
\end{align}
where $\kappa_8=\kappa_8(s,m, \kappa_1)$.
Finally, we choose
$$
R = \frac{2}{(N-1)} \, \log \left( t^{\frac{1}{m-1}} \|u_0\|_{L^1_{\Phi_1}(\hn)} \right)
$$
and substituting in \eqref{w-i4} we obtain
\begin{equation*}
\|u(t_0)\|_{L^{\infty}(\hn)}^m \leq \ka_4^m \, \frac{\left[ \log( t_0\,
 \|u_0\|_{L^1_{\Phi_1}(\hn)} ^{m-1})\right]^{\frac{sm}{m-1}}}{ t_0^{\frac{m}{m-1}}}\,,
\end{equation*}
where
$$\ka_4^m:=\frac{m \, \kappa_8}{m-1}\,\left[1+ \frac{1}{2^{2+\frac{s}{m-1}}}\right]\left(\frac{2}{(N-1)(m-1)}\right)^{\frac{sm}{m-1}}.
 $$
The assumption $R\ge 2$ and our choice of $R$, restrict the validity of the above inequality to large times. More precisely, we infer that
\[
R\ge 2\qquad\mbox{if}\qquad t_0\ge e^{(N-1)(m-1)} \|u_0\|_{L^1_{\Phi_1}(\hn)}^{-(m-1)}.
\]
We have thus obtained \eqref{eq-k5} and the proof is concluded.

\medskip

\subsection{Proof of Theorem \ref{smoothPhi}}

We begin again by exploiting the fundamental estimate \eqref{f2}. The estimate for $(I)$ works exactly in the proof of Theorem \ref{thm.smoothing.HN-like} i.e., for all $R>0$ we have \eqref{w-i1} i.e.,
\begin{equation*}
(I) \leq \frac{1}{m}\|u(t_0)\|_{L^\infty(\hn)}^m+\frac{c_m}{t_0^{\frac{m}{m-1}}}  \left(\overline{\overline{\kappa}}_1\right)^{\frac{m}{m-1}}  R^{\frac{2sm}{m-1}}
\end{equation*}
where $c_m=\frac{m}{m-1}2^{\frac{m^2}{(m-1)^2}}$ and  $\overline{\overline{\kappa}}_1$  is as in Lemma \ref{lem.Green.HN.est}. Instead, if $R\geq 2$, we get \eqref{w-i2} i.e.,
\begin{align*}
(I)\le \frac{1}{m}\|u(t_0)\|_{L^\infty(\hn)}^m+\frac{c_m}{t_0^{\frac{m}{m-1}}}(\widetilde \kappa_1)^{\frac{m}{m-1}} R^{\frac{sm}{m-1}}\,
\end{align*}
with $\widetilde \kappa_1$ defined in \eqref{Hyp.Green.HN2}.
 
For $(II)$, we first write
\begin{align}\label{w-i33}
(II)=\frac{2^{\frac{m}{m-1}}}{t_0}\, \int_{\hn \setminus B_{R}(x_0)}  \frac{\GH(x, x_0)}{(\Phi \circ \tau_{-x_0})(x)} \, u(t_0,x)\, (\Phi \circ \tau_{-x_0})(x) \, \dgh(x)\,.
\end{align}
Then, recalling that $\Phi  \in \mathcal{W}$ means $\Phi=(-\Delta_{\hn})^{-s}\overline \psi $ for some positive $ \overline \psi \in C^{\infty}_c(\hn)$, by Lemma \ref{Green-operator-estimate} we infer that
\begin{equation*}
 \Phi (x) \geq 
 \left \{ \begin{array}{ll}
 \underline D  &  \quad \text{for } r(o, x) \leq \hat R\,, \\
  \underline C\,\frac{ e^{-(N-1) r(o, x)}}{(r(o, x))^{1-s}} &\quad \text{for }  r(o, x) \geq \hat R\,,
\end{array}
\right.
\end{equation*}
where $ \underline C,  \underline D$ and $\hat R$ depend on $\overline \psi$ and on the pole $o$. Finally, from Corollary \ref{cor-1} (with $\hat R$ instead of $1$) we deduce 
\begin{equation}\label{est}
\frac{\GH(x_0,x)}{(\Phi \circ \tau_{-x_0})(x)} \leq 
\left \{ \begin{array}{ll}
\frac{C_1}{r(x_0,x)^{N-2s}}&  \quad \text{for } r(x_0, x) \leq \hat R\,, \\
 C_2&\quad \text{for }  r(x_0, x) \geq \hat R\,,
\end{array}
\right.
\end{equation}
where $C_1, C_2$ and $\hat R$ depend on $\overline \psi$ and on $o$ but not on $x_0$.
\par
\medskip
{\bf  Proof of \eqref{eq-k1PHI1}.} For $R\leq 1$, \eqref{est} yields
$$\sup_{x\in \hn \setminus B_{R}(x_0)}\frac{\GH(x_0,x)}{(\Phi \circ \tau_{-x_0})(x)} \leq \frac{C_3}{R^{N-2s}}$$
for some $C_3=C_3(\overline \psi, o, N, s)>0$. Then, \eqref{eq-k1PHI1} follows by arguing as for \eqref{thm.smoothing.HN-like.proof.5} but taking $R=\left(t_0^{\frac{1}{m-1}}\|u(t_0)\|_{_{  L^1_{\Phi}(\hn) }}\right)^{(m-1)\vartheta_1}\leq \left(t_0^{\frac{1}{m-1}}\|u_0\|_{_{  L^1_{\Phi}(\hn) }}\right)^{(m-1)\vartheta_1}\leq 1$, where we use the fact that the $L^1_{\Phi}$-norm is non increasing in time by Proposition \ref{mon_phi1}.
 
\par
\medskip
{\bf  Proof of \eqref{eq-k1PHI2}.} Let now $R \geq 1$, from \eqref{est} and the fact that the $L^1_{\Phi}$-norm is non increasing in time, we have
$$
(II)\leq \frac{2^{\frac{m}{m-1}}}{t_0}C_4 \int_{\hn \setminus B_{R}(x_0)} u(t_0, x) \, (\Phi \circ \tau_{-x_0})(x)) \, {\rm d}v_{\hn}
\le \frac{2^{\frac{m}{m-1}}}{t_0}C_4 \|u_0\|_{  L^1_{\Phi}(\hn) }\,,
$$
where $C_4$ depends on $\overline \psi$ and on $o$ but not on $x_0$. 
Then, from \eqref{f2}, we obtain
\[\begin{split}
u^m(t_0,x_0)&\le \frac{1}{m}\|u(t_0)\|_{L^\infty(\hn)}^m+\frac{c_m}{t_0^{\frac{m}{m-1}}}(\widetilde \kappa_1)^{\frac{m}{m-1}} R^{\frac{sm}{m-1}}+\frac{2^{\frac{m}{m-1}}}{t_0}C_4 \|u_0\|_{  L^1_{\Phi }(\hn) }
\end{split}
\]
which, taking $R=\left(t_0 \|u_0\|_{L^1_{\Phi}(\hn)}^{m-1}\right)^{\frac{1}{sm}}\geq 1,$ gives
$$
\|u(t_0)\|_{L^{\infty}(\hn)}^m \le  \frac{\ka_6^m}{t_0}  \|u_0\|_{  L^1_{\Phi}(\hn) }
$$
for $t_0 \geq \|u_0\|_{ L^1_{\Phi}(\hn)}^{1-m}$. This completes the proof.


\section{Semigroup theory in Banach and Hilbert spaces: existence, uniqueness, contractivity and comparison. }\label{sect.semigroups} 

There are two parallel theories based on nonlinear semigroup and gradient flow techniques, respectively posed in the $L^1(\hn)$ or $H^{-s}(\hn)$ framework. The nowadays classical theories of Benilan-Crandall-Pazy-Pierre \cite{BCr,BCPbook,CP}, in the spirit of the celebrated Crandall-Liggett theorem in Banach spaces \cite{CL71}, or the Brezis-Komura  theory \cite{Brezis1,BrezisBk,Komura} of maximal monotone operators in Hilbert spaces, both apply to our problem. Therefore, it is possible to prove existence, uniqueness and comparison results of mild solutions (solutions obtained as limit of an Implicit Time Discretization -ITD- scheme, see also \cite{Ambrosio}). Such mild (or semigroup) solutions, turn out to be unique as a consequence of contractivity of the solution map $S_t$ in the respective spaces, and actually the so-called T-contractivity (contractivity with absolute value replaced by the positive part) holds and implies comparison. However, in general mild solutions have low regularity, not enough to guarantee boundedness (the energy may not be finite).   Once constructed, we have to show that mild solutions are indeed Weak Dual Solutions and hence bounded, as a consequence of the smoothing effects proven in Section \ref{smooth}.

We devote the first part of this section to briefly explain the relevant ideas of the two nonlinear semigroup approaches, omitting tedious  and standard details: we need these solutions in order to approximate the WDS that we want to construct in suitable weighted spaces, which is one of our main results. The last part of the section is devoted to the proof of the existence and uniqueness theorems for WDS, Theorem \ref{thm-existence}.

\subsection{Nonlinear Semigroup in $L^1(\hn)$. Mild VS Weak Dual Solutions. }
The basic theory in $L^1(\hn)$ developed by Benilan-Crandall-Pazy-Pierre \cite{BCr,BCPbook,CP} applies to our setting. Indeed, our aim is to solve an equation of the form $\partial_t u=-\mathcal{L}\varphi(u)$, where the operator $\mathcal{L}: {\rm dom}(\mathcal{L})\subset L^1(\hn)\to L^1(\hn)$ is a densely defined linear m-accretive  (see also Remark \ref{Lp-stab-WDS}) operator of sub-Markovian type, and the nonlinearity $\varphi(r)=r^m$ satisfies suitable ``monotonicity conditions'' that we omit here, since the nonlinearity $\varphi(r)=r^m$ with $m>1$ is the prototype example and obviously fulfills the conditions. Indeed, this theory allows to prove existence and uniqueness for a larger class of nonlinearities and operators, but we prefer to stick to the simplest nontrivial example, in order to focus on the main ideas. In the present case, the operator is $\mathcal{L}=(-\Delta_{\hn})^s$ and the nonlinearity $\varphi(r)=r^m$, and both clearly satisfy the assumptions required for Theorem 3 and 4 of \cite{CP} to hold. The same assumptions imply that the semigroup is indeed $T$-contractive, or order preserving, cf. Chapter 19.4 of \cite{BCPbook}. We summarize the above mentioned results in the following
\normalcolor

\begin{thm}[Nonlinear Semigroup in $L^1(\hn)$, \cite{BCr,BCPbook,CP}]\label{BCPP-Thm}
Let $u_0,v_0\in L^1(\hn)$. Then there exist unique mild solutions $u,v\in C^0([0,\infty)\,:\, L^1(\hn))$ to Problem  \ref{NFDE}, such that
\begin{equation*}
\int_{\hn} \big( u(t,x)-v(t,x)\big)_+ \dg(x)\le \int_{\hn} \big( u(0,x)-v(0,x)\big)_+ \dg(x)\qquad\mbox{for all }t\ge 0\,,
\end{equation*}
and the same holds replacing $(\cdot)_+$ with $(\cdot)_-$. As a consequence,
\begin{equation*}
\|u(t)-v(t)\|_{L^1(\hn)}\le \|u_0-v_0\|_{L^1(\hn)}\qquad\mbox{for all }t\ge 0.
\end{equation*}
Moreover, nonnegative mild solutions enjoy the following time monotonicity property:
\begin{equation*}
\mbox{the map}\qquad t \mapsto t^{\frac{1}{m-1}} u(t,x) \qquad\mbox{is nondecreasing in $t > 0$ for a.e. $x \in \hn.$}
\end{equation*}
\end{thm}
As we have already explained above, now that we have built unique mild solutions in $L^1(\hn)$, we need to show that these are also WDS, hence bounded, as a consequence of Theorem~\ref{thm.smoothing.HN-like}. We devote the rest of this subsection to the proof of this fact, borrowing ideas from  Section 7 of \cite{BV1}.

\begin{thm}\label{thm-weak-mild}
Let $u$ be the mild solution to \eqref{NFDE} corresponding to the non-negative initial datum $u_0 \in L^1(\hn)$.
Then, $u$ is a WDS in the sense of Definition~\ref{defi_WDS} and it is bounded, in particular it satisfies the smoothing effects of Theorem \ref{thm.smoothing.HN-like}.
\end{thm}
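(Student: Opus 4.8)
The plan is to approximate $u_0\in L^1(\hn)$ by truncated initial data $u_{0,k}:=u_0\wedge k\in L^1(\hn)\cap L^\infty(\hn)$, and to verify the two requirements of Definition~\ref{defi_WDS} --- the regularity/integrability of $u$ and $u^m$, and the weak dual identity \eqref{def_eq} --- first for the mild solutions $u_k$ corresponding to $u_{0,k}$, and then to pass to the limit. The first step is to show that for bounded integrable data the mild solution is itself a WDS. This is the heart of the matter and should follow the scheme of \cite[Section~7]{BV1}: recall that a mild solution is obtained as a limit of the Implicit Time Discretization scheme, i.e.\ of solutions $u_j$ of the elliptic problems $u_j + h(-\Delta_{\hn})^s u_j^m = u_{j-1}$. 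At the discrete level one may apply $(-\Delta_{\hn})^{-s}$ to this identity and test against $\psi\in L^\infty_c(\hn)$, obtaining exactly the discrete form of \eqref{def_eq}; the passage $h\to0$, using the uniform $L^\infty$ bound coming from the comparison principle in $L^1\cap L^\infty$ (data bounded by $\|u_0\|_\infty$) and the contractivity estimates of Theorem~\ref{BCPP-Thm}, yields \eqref{def_eq} for the mild solution. The continuity $u\in C([0,T):L^1(\hn))$ is part of Theorem~\ref{BCPP-Thm}, and since $L^1(\hn)\subset L^1_\Phi(\hn)$, also $u\in C([0,T):L^1_\Phi(\hn))$; the local integrability of $u^m$ follows once boundedness is established.

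Once the mild solution with bounded integrable datum is known to be a WDS, the smoothing effect \eqref{thm.smoothing.HN-like.estimate} of Theorem~\ref{thm.smoothing.HN-like} applies to it, giving $\|u_k(t)\|_{L^\infty(\hn)}\le \ka_1\|u_{0,k}\|_{L^1(\hn)}^{2s\vartheta_1}t^{-N\vartheta_1}\le \ka_1\|u_0\|_{L^1(\hn)}^{2s\vartheta_1}t^{-N\vartheta_1}$ uniformly in $k$. Next I would use the monotone convergence $u_{0,k}\uparrow u_0$ in $L^1(\hn)$, together with the $L^1$-contraction \eqref{comtrection.L1} (and the order-preserving property \eqref{comparison.L1}), to conclude that $u_k(t)\uparrow u(t)$ both pointwise and in $C([0,T):L^1(\hn))$, where $u$ is the mild solution with datum $u_0$. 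The uniform bound then passes to the limit, so $u(t)\in L^\infty(\hn)$ satisfies the same smoothing estimate, and in particular $u^m\in L^\infty_{loc}((0,T):L^\infty(\hn))\subset L^1((0,T):L^1_{loc}(\hn))$, taking care near $t=0$ where one uses that $t\mapsto t^{N\vartheta_1}$ is integrable there since $N\vartheta_1<1$ when $m$ is large enough, and otherwise uses the monotonicity $t^{1/(m-1)}u(t,x)$ nondecreasing together with $u\in C([0,T):L^1)$ to control $\int_0^T\!\int_K u^m$.

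Finally I would pass to the limit in the weak dual identity: for each $k$, \eqref{def_eq} holds for $u_k$; the term $\int_0^T\!\int_{\hn}(-\Delta_{\hn})^{-s}(u_k)\,\partial_t\psi\,\dg\,\dt$ converges to the corresponding term for $u$ because, rewriting it via Fubini as $\int_0^T\!\int_{\hn}(-\Delta_{\hn})^{-s}(\partial_t\psi)\,u_k\,\dg\,\dt$ as in Remark~\ref{remdef}, one has $|(-\Delta_{\hn})^{-s}(\partial_t\psi)|\le C\Phi$ on $\hn$ by Lemma~\ref{Green-operator-estimate} and $u_k\to u$ in $C([0,T):L^1_\Phi(\hn))$; the term $\int_0^T\!\int_{\hn}u_k^m\psi\,\dg\,\dt$ converges by monotone (or dominated) convergence using the uniform $L^\infty$ bound on $\mathrm{supp}\,\psi$. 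Hence $u$ satisfies \eqref{def_eq}, and the initial datum is attained by the $C([0,T):L^1)$ continuity, so $u$ is a WDS; boundedness and the smoothing effects follow. \textbf{The main obstacle} I anticipate is making rigorous the claim that the mild solution constructed by the ITD scheme is a weak dual solution --- that is, justifying the interchange of $(-\Delta_{\hn})^{-s}$ with the discrete scheme and the limit $h\to0$ in a setting where the only a priori regularity of the mild solution is $C([0,T):L^1)$ plus the (already established) $L^\infty$ bound; this is precisely the point where one must carefully adapt \cite[Section~7]{BV1} to the functional-calculus definition of $(-\Delta_{\hn})^{-s}$ and to the Green-function estimates of Lemma~\ref{lem.Green.HN.est}.
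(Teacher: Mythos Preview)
Your proposal is correct and follows essentially the same route as the paper: the heart of the argument is the ITD-scheme computation (adapted from \cite[Section~7]{BV1}) showing that mild solutions are WDS, after which the smoothing effect of Theorem~\ref{thm.smoothing.HN-like} yields boundedness. The paper organizes things slightly differently---it runs the ITD argument directly and uses the auxiliary $L^p$-stability of mild solutions (itself proved at the level of the discrete scheme) rather than your explicit truncation $u_0\wedge k$---but the content is the same, and your added approximation layer is harmless.

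One small technical fix: your check that $u^m\in L^1((0,T):L^1_{loc}(\hn))$ is not right as written. The exponent $N\vartheta_1=N/(2s+N(m-1))$ is \emph{not} $<1$ for all $m>1$ (it fails when $m\le 2-2s/N$), and the time-monotonicity bound $u(t,x)\le (T/t)^{1/(m-1)}u(T,x)$ applied to $u^m$ produces the non-integrable factor $t^{-m/(m-1)}$, so neither of your suggested routes works directly. The correct estimate, which the paper uses in the proof of Theorem~\ref{thm-existence}, is to split $u^m=u^{m-1}\cdot u$ and write
\[
\int_0^T\!\int_K u^m\,\dg\,\dt\;\le\;\int_0^T \|u(t)\|_{L^\infty(\hn)}^{m-1}\,\|u(t)\|_{L^1(\hn)}\,\dt
\;\le\; C\,\|u_0\|_{L^1(\hn)}\int_0^T t^{-N\vartheta_1(m-1)}\,\dt,
\]
which is finite for every $m>1$ since $N\vartheta_1(m-1)=N(m-1)/(2s+N(m-1))<1$ always.
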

To prove the above theorem we need the following auxiliary result:
\begin{prop}[$L^p$-stability of Mild  Solutions]
Let $0\le u_0 \in L^1(\hn) \cap L^p(\hn)$. Then, the unique mild   solution to \eqref{NFDE} satisfies
\begin{equation}\label{eq-wm-1}
\|u(t)\|_{L^p(\hn)} \leq \|u_0\|_{L^p(\hn)} \qquad\mbox{for all $t\ge 0$ and all $1 \le p \leq \infty.$}
\end{equation}
\end{prop}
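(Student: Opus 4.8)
The plan is to propagate the $L^p$ bound through the Implicit Time Discretization (ITD) scheme that \emph{defines} the mild solution, thereby reducing the whole matter to a single resolvent step. (Abstractly this is the statement that the nonlinear semigroup generated by the completely accretive operator $(-\Delta_{\hn})^s$ is nonexpansive on every $L^p(\hn)$; since that machinery is not set up in the paper in exactly this form, I would prefer a self-contained argument.) The case $p=1$ is already contained in Theorem~\ref{BCPP-Thm} (apply \eqref{comtrection.L1} with $v_0\equiv 0$, using that $u\equiv0$ is the mild solution with zero datum), so the work is for $1<p\le\infty$.

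\textbf{Step 1: the resolvent is $L^p$-nonexpansive.} Fix $h>0$ and $0\le w\in L^1(\hn)\cap L^p(\hn)$, and let $v=(I+h(-\Delta_{\hn})^s\varphi)^{-1}w$, i.e.\ $v+h(-\Delta_{\hn})^s(v^m)=w$; this is the standard resolvent step of the B\'enilan--Crandall--Pazy--Pierre construction, and in particular it is order preserving, so $v\ge 0$, $v\in L^1(\hn)$, and one checks $v\in L^p(\hn)$ and $(-\Delta_{\hn})^s(v^m)=(w-v)/h\in L^1(\hn)$. For $1<p<\infty$ and $k>0$ I would pair the resolvent equation with the bounded nondecreasing function $\gamma_k(\sigma):=(\sigma\wedge k)^{p-1}$ (note $\gamma_k(0)=0$):
\[
\int_{\hn} v\,\gamma_k(v)\,\dg+h\int_{\hn}(-\Delta_{\hn})^s(v^m)\,\gamma_k(v)\,\dg=\int_{\hn} w\,\gamma_k(v)\,\dg .
\]
The middle integral is $\ge 0$: writing $(-\Delta_{\hn})^s$ through the heat semigroup as in \eqref{frac} and symmetrizing the (sub-Markovian, positivity preserving, $L^\infty$-contractive) kernel of $e^{t\Delta_{\hn}}$ yields the Stroock--Varopoulos type inequality $\int_{\hn}(-\Delta_{\hn})^s(f)\,\beta(f)\,\dg\ge 0$ for every nondecreasing $\beta$ with $\beta(0)=0$, which I apply with $f=v^m$ and $\beta(\sigma):=\gamma_k(\sigma^{1/m})$. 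Dropping that term, using $v\,\gamma_k(v)\ge(v\wedge k)^p$, H\"older with exponents $p,p'$ and the identity $(p-1)p'=p$,
\[
\|v\wedge k\|_{L^p(\hn)}^{p}\le\int_{\hn} w\,\gamma_k(v)\,\dg\le\|w\|_{L^p(\hn)}\,\|v\wedge k\|_{L^p(\hn)}^{p-1},
\]
hence $\|v\wedge k\|_{L^p(\hn)}\le\|w\|_{L^p(\hn)}$, and $k\uparrow\infty$ gives $\|v\|_{L^p(\hn)}\le\|w\|_{L^p(\hn)}$. For $p=\infty$ I would instead use comparison: $\hn$ is stochastically complete, so $e^{t\Delta_{\hn}}1=1$, whence $(-\Delta_{\hn})^s$ annihilates constants and $M:=\|w\|_{L^\infty(\hn)}$ satisfies $M+h(-\Delta_{\hn})^s(M^m)=M\ge w$; comparison for the resolvent gives $0\le v\le M$, i.e.\ $\|v\|_{L^\infty(\hn)}\le\|w\|_{L^\infty(\hn)}$.

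\textbf{Step 2: iteration and passage to the limit.} Given $0\le u_0\in L^1(\hn)\cap L^p(\hn)$ and $h=T/n$, the ITD scheme builds $u_0^h=u_0$, $u_k^h=(I+h(-\Delta_{\hn})^s\varphi)^{-1}u_{k-1}^h$; iterating Step~1 gives $\|u_k^h\|_{L^p(\hn)}\le\|u_0\|_{L^p(\hn)}$ for every $k$, uniformly in $h$. The mild solution $u$ is, by construction, the limit in $C([0,T]:L^1(\hn))$ of the piecewise-constant interpolants of $(u_k^h)_k$ as $h\to0$; thus for each fixed $t$ one has $u_k^h\to u(t)$ in $L^1(\hn)$ along a subsequence with $kh\to t$, hence a.e.\ along a further subsequence, and Fatou's lemma yields $\|u(t)\|_{L^p(\hn)}\le\|u_0\|_{L^p(\hn)}$ for $p<\infty$. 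The case $p=\infty$ then follows either by letting $p\to\infty$ in this inequality or by weak-$*$ lower semicontinuity of the $L^\infty$ norm. Since $T>0$ is arbitrary, \eqref{eq-wm-1} holds for all $t\ge 0$.

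\textbf{Main obstacle.} The only genuinely delicate point is the rigorous justification of the Stroock--Varopoulos inequality and of the admissibility of the truncated test function $\gamma_k(v)$ when $v$ is merely in $L^1\cap L^p$ and $v^m\in\mathrm{dom}\,(-\Delta_{\hn})^s$ only in the weak/$L^1$ sense: one makes sense of the pairing by approximating $\beta$ with smooth bounded nondecreasing functions and invoking the $L^1$-accretive, sub-Markovian structure of $(-\Delta_{\hn})^s$ already used to construct the semigroup. Verifying that the resolvent maps $L^1\cap L^p$ into itself is a further (routine) technical point; everything else is standard.
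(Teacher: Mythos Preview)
Your argument is correct and follows essentially the same route as the paper: pass the $L^p$ estimate through each step of the ITD scheme via a Stroock--Varopoulos type inequality plus H\"older, then iterate and take the limit. The one noteworthy difference is in how the integrability issues are handled. The paper first truncates the \emph{initial datum} to $u_{0,n}=T_n u_0\in L^1\cap L^\infty$, so that each $u_{k+1}$ produced by the scheme is automatically in $L^1\cap L^\infty$ and one can test directly against $u_{k+1}^{p-1}$ without further truncation; the claim for $u_0\in L^1\cap L^p$ then follows by weak lower semicontinuity of the $L^p$ norm. You instead keep the datum as is and truncate the \emph{test function} via $\gamma_k(v)=(v\wedge k)^{p-1}$, then let $k\uparrow\infty$. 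Both are legitimate; the paper's version has the advantage of completely sidestepping what you flag as the ``main obstacle'' (justifying the pairing and the Stroock--Varopoulos inequality for merely $L^1\cap L^p$ solutions), at the price of an extra approximation layer in the datum. For $p=\infty$ the paper simply sends $p\to\infty$, as in your second option; your direct comparison with constants is a valid and slightly more explicit alternative.
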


\begin{rem}\label{Lp-stab-WDS}\textsl{$L^p$ stability of very weak solutions. }\rm
Let us remark that the $L^p$ stability is easily enjoyed by WDS. Indeed, first we have to notice that WDS are indeed very weak (or distributional) solutions, namely they satisfy the following identity for every test function $\varphi\in C_c^\infty((0,T)\times \hn)$:
\begin{equation}\label{def.vwsol}
\int_{0}^{T} \int_{\hn} u(t,x) \, \partial_t \varphi(t,x) \,
\dg \, {\rm d}t - \int_{0}^{T} \int_{\hn} u^m(t,x) (- \Delta_{\hn})^{s}\varphi(t,x) \, \dg\, {\rm d}t = 0
\end{equation}
The idea to show that WDS are very weak solutions is to put $\psi=(- \Delta_{\hn})^{s}\varphi$ in the integral identity of Definition \ref{defi_WDS} of WDS: however, to make it rigorous some approximations are needed. Note that even if $0\le \varphi\in C^\infty_c$, we always have that $(- \Delta_{\hn})^{s}\varphi\asymp \Phi\asymp e^{(N-1)r}r^{s-1}$, see Lemma \ref{Green-operator-estimate}. This is a peculiarity of the nonlocal case, in contrast with the local case $s=1$, for which ${\rm supp}(\mathcal{L}u)\subseteq {\rm supp}(u)$. For this reason, we need to require further integrability conditions also for very weak solutions, for instance $u \in L^1_{loc}((0, \infty) : L^{1}_{loc} (\hn))$ and that $u^m\in L^1_{loc}\left((0, T) : L^{1}_{\Phi}(\hn) \right)$. It is not clear wether or not WDS and very weak solutions are indeed the same class.

For very weak solutions, the $L^p$ stability is just a consequence of the m-accretivity of the operator $\mathcal{L}=(-\Delta_{\hn})^s$, which reads: if $\beta$ is a maximal monotone graph in $\RR\times\RR$, with $0\in \beta(0)$, $u\in {\rm dom}(\mathcal{L})$, $\mathcal{L}u\in L^p(\hn)$, with $1\le p\le \infty$, and $v\in L^{\frac{p}{p-1}}(\hn)$, $v(x)\in \beta(u(x))$ a.e., then
\[
\int_{\hn} v(x)\mathcal{L}u(x) \dg(x) \ge 0.
\]
In particular, it is not difficult to see that if $0\le u\in {\rm dom}(\mathcal{L})\cap L^1(\hn)\cap L^\infty(\hn)$, we always have
\begin{equation}\label{m-Accretivity.2}
\int_{\hn} u^q(x)\mathcal{L}u^m(x) \dg(x) \ge 0\qquad\mbox{for all $q,m>0$.}
\end{equation}
Let now $u$ be a very weak solution of $u_t=-\mathcal{L}u^m$, corresponding to $0\le u_0 \in C^\infty_c(\hn)\subset {\rm dom}(\mathcal{L})\cap L^1(\hn)\cap L^\infty(\hn)$.
Inequality \eqref{eq-wm-1} then follows formally by integrating the following differential inequality for any $p\in [1,\infty]$,
\[
\frac{d}{dt}\int_{\hn} u^p(t,x)\dg(x)=-p \int_{\hn} u^{p-1}(t,x)\mathcal{L}u^m(t,x)\dg(x) \le 0\,,
\]
where in the last step we have used \eqref{m-Accretivity.2}.
The above proof can be made rigorous through a long but standard approximation argument:  the above differential inequality (integrated on $[0,T]$),  is nothing but the definition of very weak solution \eqref{def.vwsol} with the choice of test function $\psi(t,x)=u^{p-1}$\,. Of course, that choice is not admissible, but it can be approximated by a sequence of admissible test functions $\psi_n\in C_c^\infty((0,\infty)\times \hn)$  through a careful but standard limiting process.

We now sketch the proof -which follows the ideas of Proposition 7.1 of \cite{BV1}- of the $L^p$ stability for mild solutions, which only relies on the definition of mild solutions (without passing through very weak solutions, hence avoiding tedious and long approximations).
\end{rem}\normalcolor
\begin{proof} The case $p=1$ follows directly by Theorem \ref{BCPP-Thm}. Fix $1 < p < \infty$ and for $n\geq 1$ set $u_{0, n}(x) := T_{n} u_0(x)$ where $T_n : \mathbb{R} \rightarrow \mathbb{R}$ is defined as follows
\begin{equation}\label{tn}
T_n(x) =
 \left \{ \begin{array}{ll}
   x & \mbox{if} \ |x| \leq n\,,  \\
 \frac{x}{|x|} \, n  &  \mbox{if } \ |x|  > n.
\end{array}
\right.
\end{equation}
Clearly, $u_{0, n} \in L^p(\hn) \subset L^1(\hn)\cap L^{\infty} (\hn)$ and $u_{0, n} \rightarrow u_0$ a.e. in $\hn$, hence by dominated convergence
$u_{0, n} \rightarrow u_0$ strongly in $L^{p}(\hn)$.

We claim that it is sufficient to show that for any fixed $t>0$, the semigroup solution $u_n(t)$ corresponding to the initial datum $u_{0, n}$ satisfies
\begin{equation}\label{eq-claim-1}
\|u_n(t)\|_{L^p(\hn)} \leq \|u_{0, n}\|_{L^p(\hn)} \leq \|u_0\|_{L^p(\hn)}\,
\end{equation}
to conclude the proof of the Proposition, since the above inequality easily implies \eqref{eq-wm-1}. Indeed,
since $u_n(t)$ is bounded in $L^p(\hn)$, then, up to subsequences, $u_n  \rightharpoonup u$ in $L^p(\hn)$ and,
using lower semicontinuity of weak convergence, we obtain
$$
\|u(t)\|_{L^p(\hn)} \leq \liminf_{n} \|u_n(t)\|_{L^p(\hn)} \leq \|u_0\|_{L^{p}(\hn)}\,.
$$
Hence, \eqref{eq-wm-1} holds true for all $p < \infty.$ For $p = \infty$ we just let $p \rightarrow \infty$ in the above inequality.

\par

It just remains to prove \eqref{eq-claim-1}, and without loss of generality we can assume $u_0 \in L^{1}(\hn) \cap L^{\infty}(\hn)$. Mild solutions are obtained via an ITD scheme: consider the following partition of $[0, T]$ (with $t_0 = 0$ and $t_n = T$)
$$
t_k = \frac{k}{n} T, \quad \mbox{for any} \ 0 \leq k \leq n,
$$
with $h = t_{k+1} - t_{k} = \frac{T}{n}.$ For any $t \in (0, T),$ the unique semigroup solution $u(t, .)$ is obtained
as the strong $L^1(\hn)$-limit of solutions $u_{k + 1}(.) = u(t_{k+1},)$ to the following elliptic equation:
\begin{equation}\label{eq-approximate}
h (-\Delta_{\hn})^s (u_{k+1})^m  \, + \, u_{k+1} = u_{k}  \quad \mbox{in} \ \hn
\end{equation}
whose solvability is guaranteed by the running assumption on the operator and nonlinearity, cf. Theorem \ref{thm-weak-mild}, in particular
\begin{align*}
 \int_{\hn} (u_{k+1}^p - u_k u_{k+1}^{p-1} ) \, \dg
  & = -h \int_{\hn} u^m_{k+1}\, (-\Delta_{\hn})^s(u_{k+1})^{p-1}\, \dg \geq 0.
\end{align*}
(This is equivalent to the sub-Markovianity of $\mathcal{L}$). Therefore, we obtain
\begin{align*}
\int_{\hn} u_{k+1}^p \, \dg & \leq \int_{\hn} u_k \, u_{k+1}^{p-1} \, \dg \\
&  \leq \|u_k\|_{L^p(\hn)} \left( \int_{\hn} u_{k+1}^p \, \dg \right)^{\frac{p-1}{p}}.
\end{align*}
A simple inductive argument shows that
$$
\|u_{k+1}\|_{L^p(\hn)} \leq \|u_k\|_{L^p(\hn)} \leq ..... \leq \|u_0\|_{L^p(\hn)}.
$$
We conclude the proof by letting $k \rightarrow \infty$ to obtain
$$
\|u(t)\|_{L^p(\hn)} \leq \liminf_{k \rightarrow \infty} \|u_{k+1}(t)\|_{L^p(\hn)} \leq \|u_0\|_{L^p(\hn)}.
$$
\end{proof}

\noindent\textbf{Proof of Theorem \ref{thm-weak-mild}. }We sketch the proof which follows the ideas of Proposition 7.2 of \cite{BV1} which deals with FPME on bounded Euclidean domains. We will just emphasize the main points for convenience of the reader. \normalcolor We first pass to a weak dual formulation of the approximate problem elliptic problem \eqref{eq-approximate}, i.e. we apply $(-\Delta_{\hn})^{-s}$ to both members of \eqref{eq-approximate} to get:
$$
(-\Delta_{\hn})^{-s} u_{k+1} - (-\Delta_{\hn})^{-s} u_k = -h (u_{k+1})^m \quad \text{in } \hn\,.
$$
By multiplying the above equation by $\psi \in C^1_c(0, T; L_c^{\infty}(\hn)),$ we obtain (upon using the notation $\psi_k=\psi(\cdot,t_k )$)
$$
\sum_{k =0}^{n-1} \, \int_{\hn} [ (-\Delta_{\hn})^{-s} u_{k+1} - (- \Delta_{\hn})^{-s} u_k ] \, \psi_{k} \, \dg
= - h \, \sum_{k=0}^{n-1} \, \int_{\hn} (u_{k+1})^m \, \psi_{k} \, \dg.
$$
Since $\psi$ is compactly supported in time, then for sufficiently large $n$, we have
$$
\psi_1 = \psi(\cdot, t_1) = \psi(\cdot, \frac{T}{n}) = 0 \quad \mbox{and} \  \psi_{n-1} \rightarrow   \psi(\cdot, T) = 0.
$$
By this  we infer that
\begin{align*}
\int_{\hn} \, [\psi_{n-1} \, (-\Delta_{\hn})^{-s} u_n \, & - \, \psi_0 (-\Delta_{\hn})^{-s} u_0] \, \dg \\
- \underbrace{\sum_{k =1}^{n-1} \, \int_{\hn} (\psi_k - \psi_{k-1}) (-\Delta_{\hn})^{-s} u_k \, \dg}_{= I_1}
& = - \, \underbrace{h \, \sum_{k =0}^{n-1} \int_{\hn} (u_{k+1})^m \, \psi_k(x) \, \dg}_{= I_2}.
\end{align*}
Let us carefully look at $I_1$ :
\begin{align*}
I_1 & =  \sum_{k =1}^{n-1}\,  h \, \int_{\hn} \frac{(\psi_k - \psi_{k-1})}{h} (-\Delta_{\hn})^{-s} u_k \, \dg \\
& = \sum_{k =1}^{n-1}\,  h \, \int_{\hn} (\psi_k)_t(x, \hat{t}_k) \, (-\Delta_{\hn})^{-s}( u_k - u)\, \dg
+  \sum_{k =1}^{n-1}\,  h \, \int_{\hn} (\psi_k)_t(x, \hat{t}_k) \, (-\Delta_{\hn})^{-s} \, u\, \dg.
\end{align*}
Now consider the above two integrals separately. Since $\psi$ is compactly supported, we obtain
\begin{align*}
&\sum_{k =1}^{n-1}\,  h \, \int_{\hn} (\psi_k)_t(\cdot, \hat{t}_k) \, (-\Delta_{\hn})^{-s}( u_k - u)\, \dg \\
 =& \sum_{k =1}^{n-1}\,  h \, \int_{\hn} (-\Delta_{\hn})^{-s} ( (\psi_k)_t(\cdot, \hat{t}_k)) \, ( u_k - u)\, \dg \\
& \leq   \|u_k - u\|_{L^1(\hn)}  \,
 \sum_{k =1}^{n-1} \, h  \left\|(-\Delta_{\hn})^{-s} ( (\psi_k)_t(\cdot, \hat{t}_k))
  \right\|_{L^{\infty}(K)} \xrightarrow[n \to \infty]{}0\,,
\end{align*}
where we have exploited the fact that, $\|u_k - u\|_{L^1(\hn)} \xrightarrow[n \to \infty]{}0$ and that,
$$
\sum_{k =1}^{n-1} \, h  \left\|(-\Delta_{\hn})^{-s} ( (\psi_k)_t(\cdot, \hat{t}_k))
  \right\|_{L^{\infty}(K)} \xrightarrow[n \to \infty]{}  \int_{0}^{T}\|(-\Delta_{\hn})^{-s} \psi_t(\cdot,t) \|_{L^{\infty}(\hn)} dt  < \infty\,,
$$
the latter integral being bounded by arguing  as in Lemma \ref{Green-operator-estimate}.

On the other hand, we recognize that the second integral is nothing but a Riemann sum, so that
$$
 \sum_{k =1}^{n-1}\,  h \, \int_{\hn} (\psi_k)_t(x, \hat{t}_k) \, (-\Delta_{\hn})^{-s} \, u\, \dg
\longrightarrow \int_0^T \int_{\hn} \, (\psi_t) (-\Delta_{\hn})^{-s} (u) \, \dg \, {\rm d}t\,,
$$
as $n \rightarrow \infty$. Hence,
\begin{equation}\label{Riemann-1}
I_1\xrightarrow[n \to \infty]{}0 \int_0^T \int_{\hn} \, (\psi_t) (-\Delta_{\hn})^{-s} (u) \, \dg \, {\rm d}t\,.
\end{equation}
A similar computation yields
\begin{equation}\label{Riemann-2}
I_2 \xrightarrow[n \to \infty]{}0 \int_0^T \int_{\hn} \, u^m \, \psi \, \dg \, {\rm d}t\,.
\end{equation}
Moreover, letting $n \rightarrow \infty,$  we obtain
\begin{equation}\label{Riemann-3}
\int_{\hn} \, [\psi_{n-1} \, (-\Delta_{\hn})^{-s} u_n \,  - \, \psi_0 (-\Delta_{\hn})^{-s} u_0] \, \dg\,
\xrightarrow[n \to \infty]{}0 \,.
\end{equation}
Finally, as a consequence of \eqref{Riemann-1}, \eqref{Riemann-2} and \eqref{Riemann-3} we obtain
$$
\int_0^T \int_{\hn} \, (\psi_t) (-\Delta_{\hn})^{-s} (u) \, \dg \, {\rm d}t
- \int_0^T \int_{\hn} \, u^m \, \psi \, \dg \, {\rm d}t = 0\,,
$$
for all $\psi \in C^1_c(0, T; L_c^{\infty}(\hn)).$ The above expression shows that $u$ is a WDS, recalling that $L^1(\hn) \subset L^1_{\Phi}(\hn)$ and $L^1(\hn) \subset L^1_{\Phi_1}(\hn)$. Furthermore, the boundedness of solutions follows by Theorem \ref{thm.smoothing.HN-like}, which holds for WDS with $0\le u_0\in L^1(\hn)$.\qed

\subsection{Nonlinear Semigroup in $H^{-s}(\hn)$ and Gradient Flows.}\label{gradient}

The celebrated theory of maximal monotone operators in Hilbert spaces of Brezis and Komura, \cite{Brezis1,BrezisBk,Komura}, applies in this framework, see for instance Brezis' paper \cite{Brezis1}. The link between such theory and the theory of Gradient Flows has been very well explained by Ambrosio et al. in \cite{Ambrosio}, see also \cite{AGSbook}. We will briefly explain how the previously mentioned theory applies to the present case.

We choose as Hilbert space $H^{-s}(\hn)$, the dual of $H^s(\hn)$, with the Hilbertian norm given by
\begin{equation*}
\|f\|_{H^{-s}(\hn)}^2=\int_{\hn} f(-\Delta_{\hn})^{-s} f \dg = \int_{\hn} \left|(-\Delta_{\hn})^{-s/2} f\right|^2 \dg
\end{equation*}
and the corresponding scalar product. Define the (convex, nonlinear) energy functional
\begin{equation*}
E_m[f]=\frac{1}{m+1}\int_{\hn} |f|^{m+1} \dg
\end{equation*}
whenever finite, $+\infty$ elsewhere. The equation $ u_t = -(-\Delta)^s u^m$ can be interpreted as the subdifferential inclusion $u_t \in \partial[E_m](u)$, where $\partial[E_m]$ is the sub-differential in $H^{-s}(\hn)$ of the convex energy functional $E_m$.

We summarize in the following theorem the results of the  Brezis-Komura theory.

\begin{thm}[Brezis-Komura, \cite{Brezis1,BrezisBk,Komura}]\label{BK-Thm}
For every $u_0\in \overline{{\rm dom }E_m} = H^{-s}(\hn)$, there exists a unique gradient flow solution starting from $u_0$, that we denote by $u(t)=S_tu_0$. This defines a strongly continuous semigroup $S_t: H^{-s}(\hn)\to H^{-s}(\hn)$ for $t>0$, i.e. $S_{t+\tau}=S_t S_\tau$ for any $t,\tau>0$ with the contraction property
\[
\|u(t)-v(t)\|_{H^{-s}(\hn)}\le \|u_0-v_0\|_{H^{-s}(\hn)}\qquad\mbox{for all }u_0,v_0\in H^{-s}(\hn).
\]
Moreover, the T-contraction property holds true,
\begin{equation}\label{T-contraction.2}
\|(u(t)-v(t))_+\|_{H^{-s}(\hn)}\le \|(u_0-v_0)_+\|_{H^{-s}(\hn)}\qquad\mbox{for all }u_0,v_0\in H^{-s}(\hn).
\end{equation}
Hence standard comparison holds in $H^{-s}(\hn)$. Moreover,  for a.e. $t>0$ we have
\begin{equation*}
\frac{1}{2}\frac{d}{dt}\|u(t)\|_{H^{-s}(\hn)}^2= -\frac{1}{m+1}\int_{\hn} |u(t)|^{m+1} \dg = -E_m[u(t)]\le 0
\end{equation*}
and
\begin{equation*}
\frac{d}{dt}E_m[u(t)]= - \int_{\hn}u^m (-\Delta_{\hn})^s u^m \dg \le 0\,.
\end{equation*}
Finally, the equation $u_t=(-\Delta_{\hn})^s u^m$ holds true for almost every $t>0$ as equality among functions in $H^{-s}(\hn)$.   We call this solutions $H^{-s}$-strong solutions.
\end{thm}
The concept of gradient flow solution corresponds in this case to $H^{-s}$-strong solutions, which are a priori, a smaller class than mild solutions, hence more regular. 
More details can be found in \cite{Ambrosio,BrezisBk,Komura}. The proof of inequality \eqref{T-contraction.2} is due to Brezis and it has been sketched in Lemma \ref{comp.H-s}.\normalcolor

The proof of this theorem, considered nowadays the nonlinear analogous of the Hille-Yosida Theorem, follows by a careful analysis of the convergence of the ITD, or the Implicit Euler Scheme. An excellent exposition is given in the Lecture Notes of Ambrosio et al. \cite{Ambrosio}, where a characterization in terms of EVI, Evolution Variational Inequalities, is given. Recall that an absolutely continuous curve in $u(t)\in H^{-s}(\hn)$ is called an EVI solution if  for any $y\in {\rm dom }E_m\subset H^{-s}(\hn)$ we have
\begin{equation}\label{EVI}
\frac{1}{2}\frac{d}{dt}\|u(t)-y\|_{H^{-s}(\hn)}^2+ E_m[u(t)]\le E_m[y]\qquad\mbox{for a.e. }t>0.
\end{equation}
In Section 11 of  \cite{Ambrosio} it is carefully explained that $u$ is an EVI solution if and only if it is a gradient flow solution as in Theorem \ref{BK-Thm}. Hence we just have to check inequality \eqref{EVI}, namely
\begin{equation*}
\begin{split}
\frac{1}{2}\frac{d}{dt}&\|u(t)-y\|_{H^{-s}(\hn)}^2 =\int_{\hn} u_t(t)(-\Delta_{\hn})^{-s} (u(t)-y) \dg
 =\int_{\hn} (u-y) (-\Delta_{\hn})^{-s}u_t \dg\\
 &=- \int_{\hn} (u-y) u^m \dg
  =- \int_{\hn} u^{m+1} \dg + \int_{\hn}  y u^m \dg  \\
  &\le -\left(1- \frac{m}{m+1}\right) \int_{\hn} u^{m+1} \dg +\frac{1}{m+1}\int_{\hn} y^{m+1} \dg
  =-E_m[u(t)]+E_m[y]\,,
\end{split}
\end{equation*}
where we have used Fubini's theorem, the dual equation $(-\Delta_{\hn})^{-s}u_t=-u^m$, and Young's inequality $ab\le \frac{m}{m+1}a^{\frac{m+1}{m}}+\frac{1}{m+1}b^{m+1}$.
\begin{rem}\label{rem.strong.H-s}\textbf{Strong $H^{-s}$ solutions are WDS. }When $u_0$ is sufficiently integrable, for instance $u_0\in L^1(\hn)\cap L^\infty(\hn)$ (weaker assumptions are possible), then the gradient flow solutions (or strong $H^{-s}$ solutions) constructed in Theorem \ref{BK-Thm}, first introduced by Brezis and Komura \cite{Brezis1,BrezisBk,Komura}, turn our to be also WDS. Let us recall the definition of strong $H^{-s}$ solution: let $u\in C^0([0,T]:H^{-s}(\hn))$, $u^m\in L^1([0,T]: H^{s}(\hn))$, be such that
\begin{equation*}
\int_0^T\int_{\hn} u\,\partial_t \psi\, \dg dt=\int_0^T\int_{\hn}u^m\, (-\Delta_{\hn})^{s}\psi\dg dt\qquad \forall \psi\in C^1_c([0,T]:H^{s}(\hn))
\end{equation*}
or, equivalently, since $(-\Delta_{\hn})^{s}:H^{s}(\hn)\to H^{-s}(\hn)$ is an isomorfism, let $(-\Delta_{\hn})^s\psi=\eta$,
\begin{equation*}
\int_0^T\int_{\hn} (-\Delta_{\hn})^{-s}u\,\partial_t\eta\,\dg dt
=\int_0^T\int_{\hn}u^m\,\eta\,\dg dt\qquad\forall\eta\in C^1_c([0,T]:H^{-s}(\hn)).
\end{equation*}
The above solutions turn out to be equivalent to Weak Dual Solutions in the sense of Definition \ref{defi_WDS}, when the initial datum is  sufficiently integrable. This can be seen using the Poincar\'e inequality $\lambda_1\|f\|_{H^{-s}(\hn)}\le \|f\|_{L^2(\hn)}$, together with the $L^p$ stability of very weak solutions, see Lemma \ref{eq-wm-12} and Remark \ref{Lp-stab-WDS}. As a consequence, we have that $H^{-s}$-strong solutions are also WDS: this is clear since the class of test functions of WDS is strictly contained in the above class of admissible test functions, more precisely $C^1_c([0,T]:H^{-s}(\hn))\subset C^1_c([0,T]:L^\infty_c(\hn))$.
%
%

Notice that the concept of WDS is somehow more general than the one of $H^{-s}$ strong solutions, indeed nonnegative $H^{-s}$ functions are in $L^1_{\Phi}$, namely for any $0\le f \in H^{-s}(\hn) $ we have
\begin{align}\label{L1-H-s.norms}
\|f\|_{L^1_{\Phi}(\hn)}&=\int_{\hn}f(-\Delta_{\hn})^{-s}\psi\dg
=\int_{\hn}(-\Delta_{\hn})^{-s/2}f(-\Delta_{\hn})^{-s/2}\psi\dg\\
&\le \left(\int_{\hn}\left|(-\Delta_{\hn})^{-s/2} \psi\right|^2 \dg\right)^{\frac{1}{2}}
\left(\int_{\hn}\left|(-\Delta_{\hn})^{-s/2} f\right|^2\dg\right)^{\frac{1}{2}}
=c_\psi \|f\|_{H^{-s}(\hn)}\,.\nonumber
\end{align}
Hence $H^{-s}_+\subseteq L^1_{\Phi,+}(\hn)$.  Note that $c_\psi =\int_{\hn}\psi (-\Delta_{\hn})^{-s}\psi\dg
= \int_{\hn}\psi \Phi\dg = \|\psi\|_{L^1_{\Phi}}<+\infty$ since $0\le \psi\in C_c^\infty(\hn)$ and $\Phi=(-\Delta_{\hn})^{-s}\psi$.

We would like to stress that as a consequence of our $L^1_{\Phi}-L^\infty$ smoothing effects, Theorem \ref{smoothPhi}, we have shown that $H^{-s}$-strong solutions are bounded, see Remark \ref{rem.smooth.L1.H-s}.
\end{rem}

\subsection{Proof of Theorem \ref{thm-existence}} The proof of existence of (minimal) weak dual solutions will be obtained by approximations in terms of mild solutions, more precisely the minimal weak dual solution will be the limit of a pointwise monotone sequence of (unique) mild solutions: we have decided to take, as approximating sequences, $L^1$-mild solutions, whose existence and uniqueness is guaranteed by Theorem \ref{BCPP-Thm}. However, a similar proof would hold by replacing $L^1$-mild solutions with $H^{-s}$-gradient-flow solutions,  whose existence and uniqueness is guaranteed by Theorem \ref{BK-Thm}.

\noindent\textbf{Approximation: }Let $u_0\ge 0$ be the initial datum (the same construction will work in $L^1(\hn)$, $L^1_{\Phi_1}(\hn)$ or $L^1_{\Phi}(\hn)$). Define, for $n\geq 1$, $u_{0,n}(x):=\chi_{B_n(x_0)}(x)\,T_n u_0(x)$, where $T_n$ is defined as in \eqref{tn}. Hence, $u_{0,n}\in L^1(\hn) \cap L^{\infty}(\hn)$ and $u_{0,n}(x)\leq u_{0,n+1}(x)\leq u_{0}(x)\in  L^1(\hn)$ for a.e. $x \in \hn$. By comparison, we know that the unique $L^1$-mild solutions satisfy: $u_n(t,x)\le u_{n+1}(t,x)$ for a.e. $x\in\hn$ and a.e. $t>0$, hence the limit
\begin{equation}\label{def.WDS.approx}
u(t,x):=\liminf_{n\to\infty}u_{n+1}(t,x)
\end{equation}
exists for a.e. $x\in\hn$ and a.e. $t>0$ as limit of monotone sequences of real numbers. This is our candidate to be the minimal weak dual solution. We will analyze different setups below. As a consequence of the Monotone Convergence Theorem, we have that $u_n(t)\to u(t)$ as $n\to \infty$ in the strong $L^1(\hn)$, $L^1_{\Phi_1}(\hn)$ or $L^1_{\Phi}(\hn)$) topology, depending on where $u_0$ belongs.

\noindent\textbf{Proof of Theorem \ref{thm-existence}. The case $u_0\in L^1(\hn)$. }First we show that the WDS defined in \eqref{def.WDS.approx} is bounded, more precisely, by semicontinuity of the $L^\infty$-norm, we have that for all $t>0$
\[
\|u(t)\|_{L^\infty(\hn)}\leq \liminf_{n\rightarrow +\infty}\|u_n(t)\|_{L^\infty(\hn)}
\leq \ka_1\liminf_{n\rightarrow +\infty}\frac{\|u_{0,n}\|^{2s \vartheta_1}_{L^1(\hn)}}{t^{N \vartheta_1}}
= \ka_1 \frac{\|u_{0}\|^{2s \vartheta_1}_{L^1(\hn)}}{t^{N \vartheta_1}}\,,
\]
and we have used the $L^1-L^\infty$ smoothing estimates \eqref{thm.smoothing.HN-like.estimate}, which hold true for the mild solutions of the approximating sequence $u_n$, as shown in Theorem \ref{thm-weak-mild}.

As a consequence, for all $t>0$, $u(t,x) \in L^1(\hn) \cap L^{\infty}(\hn)\subset L^p(\hn)$ for all $p\geq 1$. Now, for $1<p<+\infty$, the elementary inequality $a^p-b^p\leq pa^{p-1}(a-b)$ gives
\begin{equation}\label{L1Lp}
0\leq \int_{\hn} ( u^p(t,x)-u_n^p(t,x))  \, \dg(x) \leq p \|u(t, x)\|_{L^\infty(\hn)}^{p-1} \int_{\hn} ( u(t,x)-u_n(t,x))  \, \dg(x)\,.
\end{equation}
Hence, $u_{n}(t) \rightarrow u(t)$ strongly in $L^p$, for all $p\in (1,\infty)$ and $t>0$, as a consequence of the $L^1$-strong convergence.

We are finally ready to verify that $u$ is a weak dual solution in the sense of Definition~\ref{defi_WDS}:
\begin{itemize}[leftmargin=*]
\item $u \in C([0, T) :  L^{1} (\hn))$. This follows by noting that  for all $0\leq \tau< t$ or $0\leq t<\tau$  and $n\geq n_0>0$, one has
\[
\begin{split}
& \int_{\hn} | u(t,x)-u(\tau,x)| \, \dg(x) 
 \leq \int_{\hn} |u(t,x)-u_{n}(t,x)|  \, \dg(x) \\
&   +2\int_{\hn} | u_{0,n}(x)-u_{0,n_0}(x)|  \, \dg(x) 
+\int_{\hn} |u_{n_0}(t,x)-u_{n_0}(\tau,x)|  \, \dg(x) \\
& +\int_{\hn} |u_{n}(\tau,x)-u(\tau,x)|  \, \dg(x)\,,
\end{split}
\]
 where we have also exploited the stability inequality given in Theorem \ref{BCPP-Thm}.
 Then, recalling that $u_{n_0} \in C([0, \infty) : L^1(\hn))$, the continuity for $t>0$ readily follows from above  by taking $n_0$ sufficiently large;  similarly, for $t=0$ we   also obtain that $u(0,x)=u_0(x)$ a.e. in $\hn$, by
\[
\begin{split}
\int_{\hn} |u(t,x)-u_0(x)|  &\, \dg(x)
\leq \int_{\hn} | u(t,x)-u_{n}(t,x)|  \, \dg(x) \\
&+\int_{\hn} |u_{n}(t,x)-u_{n,0}(x) | \, \dg(x)+\int_{\hn} |u_{n,0}(x)-u_0(x)|  \, \dg(x)\,.
\end{split}
\]

\item   $u^m \in L^1\left( (0, T) : L^{1}(\hn) \right)$ for any fixed $T>0$. To this aim, we note that, from inequality \eqref{thm.smoothing.HN-like.estimate} of Theorem \ref{thm.smoothing.HN-like}, we have
\begin{align*}
&\int_0^T \int_{\hn} u^m(t,x)  \, \dg(x)\,dt \leq \int_0^T\|u(t, x)\|_{L^\infty(\hn)}^{m-1}  \int_{\hn} u(t,x)  \, \dg(x)\,dt\\
& \leq \ka_1^{m-1} \|u_{0}(x)\|^{2s \vartheta_1(m-1)+1}_{L^1(\hn)} \int_{0}^{T}  \frac{1}{t^{N \vartheta_1(m-1)}} \,dt <+\infty\,,
\end{align*}
where the latter integral is bounded since $N \vartheta_1(m-1)<1$.

\item $u$ satisfies the following integral identity
\begin{equation*}
\int_{0}^{T} \int_{\hn} (- \Delta_{\hn})^{-s}(u) \, \frac{\partial \psi}{\partial t} \,
\dg \, {\rm d}t - \int_{0}^{T} \int_{\hn} u^m \psi \, \dg\, {\rm d}t = 0
\end{equation*}
for every test function $\psi \in C^1_c(0,T; \mbox{L}_c^{\infty}(\hn)).$ This follows by recalling that we have proven in Theorem \ref{thm-weak-mild} that being $u_n$ an $L^1$-mild solution, it is also a weak dual solution, hence we just have to take limits in the following equality:
\begin{equation*}
\int_{0}^{T} \int_{\hn} (- \Delta_{\hn})^{-s}(u_n) \, \frac{\partial \psi}{\partial t} \,
\dg \, {\rm d}t - \int_{0}^{T} \int_{\hn} u_n^m \psi \, \dg\, {\rm d}t = 0
\end{equation*}
for every test function $\psi \in C^1_c(0,T; \mbox{L}_c^{\infty}(\hn)).$ Having in mind this goal, we note that
$$\int_{0}^{T} \int_{\hn} (- \Delta_{\hn})^{-s}(u_n) \, \frac{\partial \psi}{\partial t} \,
\dg \, {\rm d}t =\int_{0}^{T} \int_{\hn} u_n \, (- \Delta_{\hn})^{-s}(\frac{\partial \psi}{\partial t}) \,
\dg \, {\rm d}t $$
and that, as $n\rightarrow + \infty$, we have
$$\int_{0}^{T} \int_{\hn} u_n \, (- \Delta_{\hn})^{-s}(\frac{\partial \psi}{\partial t}) \,
\dg \, {\rm d}t \rightarrow  \int_{0}^{T} \int_{\hn} u \, (- \Delta_{\hn})^{-s}(\frac{\partial \psi}{\partial t}) \,
\dg \, {\rm d}t\,,$$
since the sequence $\{u_n\}$ is monotone, $u \in L^1\left( (0, T) ; L^{1} (\hn) \right) $ and, by Lemma \ref{Green-operator-estimate}, $(- \Delta_{\hn})^{-s}(\frac{\partial \psi}{\partial t})$  $\in L^{\infty}((0,T)\times \hn)$.
Furthermore, as $n\rightarrow + \infty$, we have
$$\int_{0}^{T} \int_{\hn} u_n^m \psi \, \dg\, {\rm d}t \rightarrow \int_{0}^{T} \int_{\hn} u^m \psi \, \dg\, {\rm d}t$$
since, the sequence $\{u_n^m\}$ is monotone and $u^m \in L^1\left( (0, T) ; L^{1} (\hn) \right) $.
\end{itemize}
The proof of Theorem \ref{thm-existence} when $u_0\in L^1(\hn)$ is concluded.

\noindent\textbf{Proof of Theorem \ref{thm-existence}. The cases $u_0\in L^1_{\Phi}(\hn)$ with $\Phi\in \mathcal{W}$ and $u_0\in L^1_{\Phi_1}(\hn)$ . }The proof is similar to the one for $u_0\in L^1(\hn)$, hence we just sketch it. 

Again, the first step consist in showing that the WDS defined in \eqref{def.WDS.approx} is bounded, more precisely, by semicontinuity of the $L^\infty$-norm, we have that for all $t>0$
\begin{equation*}
\|u(t)\|_{L^\infty(\hn)}\leq \liminf_{n\rightarrow +\infty}\|u_n(t)\|_{L^\infty(\hn)}
\leq \ka_1\liminf_{n\rightarrow +\infty}\frac{\|u_{0,n}\|^{2s \vartheta_1}_{L^1_{\Phi_1}(\hn)}}{t^{N \vartheta_1}}
= \ka_1 \frac{\|u_{0}\|^{2s \vartheta_1}_{L^1_{\Phi_1}(\hn)}}{t^{N \vartheta_1}}\,,
\end{equation*}
and we have used the $L^1_{\Phi_1}-L^\infty$ smoothing estimates \eqref{thm.smoothing.HN-like.estimate}, which hold true for the mild solutions of the approximating sequence $u_n$, as shown in Theorem \ref{thm-weak-mild}. Similarly for $\Phi\in \mathcal{W}$, using the smoothing estimate \eqref{eq-k3}, we get
\begin{equation}\label{sm-phi}
\|u(t)\|_{L^\infty(\hn)}\leq \liminf_{n\rightarrow +\infty}\|u_n(t)\|_{L^\infty(\hn)}
\leq  \frac{\ka_5}{t^{1/m}}  \liminf_{n\rightarrow +\infty} \|u_{0,n}\|_{L^1_{\Phi}(\hn)}^{1/m}
=  \frac{\ka_5}{t^{1/m}}  \|u_0\|_{  L^1_{\Phi}(\hn)}^{1/m}\,.
\end{equation}
As a consequence, $u_{n}(t) \rightarrow u(t)$ strongly in $L^p_{\Phi_1}$ (resp. $L^p_{\Phi}$), for all $p\in (1,\infty)$ and $t>0$, as a consequence of the $L^1_{\Phi_1}$-strong convergence (resp. $L^1_{\Phi}$-strong convergence), using the weighted analogous of inequality \eqref{L1Lp} which has essentially the same proof.

We are finally ready to verify that $u$ is a weak dual solution in the sense of Definition~\ref{defi_WDS}:

\noindent\textbf{The case of $\Phi\in \mathcal{W}$. }

\begin{itemize}[leftmargin=*]
\item $u \in C([0, T) : L^{1}_{\Phi} (\hn))$.
This follows by noting that for all $0\leq \tau< t$ or $0\leq t<\tau$,  $x_0 \in \hn$ and $n\geq n_0>0$, one has
\[
\begin{split}
 \int_{\hn} | u(t,x)-u(\tau,x)|\, (\Phi \circ \tau_{-x_0})(x)\,& \dg(x) 
 \leq \int_{\hn} |u(t,x)-u_{n}(t,x)|  \, (\Phi \circ \tau_{-x_0})(x)\, \dg(x) \\
&   +2\int_{\hn} | u_{0,n}(x)-u_{0,n_0}(x)|  \, (\Phi \circ \tau_{-x_0})(x)\, \dg(x) \\
&
+\int_{\hn} |u_{n_0}(t,x)-u_{n_0}(\tau,x)|  \, (\Phi \circ \tau_{-x_0})(x)\, \dg(x) \\
& +\int_{\hn} |u_{n}(\tau,x)-u(\tau,x)|  \, (\Phi \circ \tau_{-x_0})(x)\, \dg(x)\,,
\end{split}
\]
 where we have also exploited the stability inequality given in Proposition \ref{monotonePhi}. Using that $u_{n}(t) \rightarrow u(t) $ strongly in $L^1_{\Phi \circ \tau_{-x_0}}$ and $u_{n_0} \in C([0, \infty) : L^1(\hn)) \subset C([0, T) : L^{1}_{\Phi} (\hn))$, by taking $n_0$ sufficiently large, the above estimate yields the continuity for $t>0$; when $t=0$ we proceed analogously.

\item $u^m \in L^1\left( (0, T) : L^{1}_{\Phi}(\hn) \right) $. This follows by the smoothing estimate \eqref{sm-phi}. Indeed, we have
\[\begin{split}
\int_0^T \int_{\hn} u(t,x)^m \,\Phi(x) \, \dgh(x)\,dt
&\leq \int_0^T\|u(t, x)\|_{L^\infty(\hn)}^{m-1}  \int_{\hn} u(t,x) \,\Phi(x) \, \dgh(x)\,dt\\
&\leq \|u_0(x)\|^{\frac{2m-1}{m}}_{L^1_{\Phi}(\hn)} \int_0^T    \frac{\ka_5^{m-1}}{t_0^{\frac{m-1}{m}}}  \,dt <+\infty\,,
\end{split}
\]
since $\frac{m-1}{m}<1$.
\item $u$ satisfies the following identity

\begin{equation*}
\int_{0}^{T} \int_{\hn} (- \Delta_{\hn})^{-s}(u) \, \frac{\partial \psi}{\partial t} \,
\dgh(x) \, {\rm d}t - \int_{0}^{T} \int_{\hn} u^m \psi \, \dgh(x)\, {\rm d}t = 0
\end{equation*}
for every test function $\psi \in C^1_c(0,T; \mbox{L}_c^{\infty}(\hn)).$ This follows by recalling that $u_n$ is a weak dual solution and then passing to the limit in the equality:
\begin{equation*}
\int_{0}^{T} \int_{\hn} (- \Delta_{\hn})^{-s}(u_n) \, \frac{\partial \psi}{\partial t} \,
\dgh(x) \, {\rm d}t - \int_{0}^{T} \int_{\hn} u_n^m \psi \, \dgh(x)\, {\rm d}t = 0
\end{equation*}
for every test function $\psi \in C^1_c(0,T; \mbox{L}_c^{\infty}(\hn)).$ To this aim, we note that
$$\int_{0}^{T} \int_{\hn} (- \Delta_{\hn})^{-s}(u_n) \, \frac{\partial \psi}{\partial t} \,
\dgh(x) \, {\rm d}t =\int_{0}^{T} \int_{\hn} u_n \, (- \Delta_{\hn})^{-s}(\frac{\partial \psi}{\partial t}) \,
\dgh(x) \, {\rm d}t $$
and that, as $n\rightarrow + \infty$, we have
$$\int_{0}^{T} \int_{\hn} u_n \, (- \Delta_{\hn})^{-s}(\frac{\partial \psi}{\partial t}) \,
\dgh(x) \, {\rm d}t \rightarrow  \int_{0}^{T} \int_{\hn} u \, (- \Delta_{\hn})^{-s}(\frac{\partial \psi}{\partial t}) \,
\dgh(x) \, {\rm d}t\,,$$
since the sequence $\{u_n\}$ is monotone, $u \in L^1\left( (0, T) ; L^{1}_{\Phi} (\hn) \right) $ and, arguing as in the proof of by Lemma \ref{Green-operator-estimate}, it readily follows that $|(- \Delta_{\hn})^{-s}(\frac{\partial \psi}{\partial t})|\leq C(t)\Phi(x)$ for some $C\in C^0_c(0,T)$.

Furthermore, as $n\rightarrow + \infty$, we have
$$\int_{0}^{T} \int_{\hn} u_n^m \psi \, \dgh(x)\, {\rm d}t \rightarrow \int_{0}^{T} \int_{\hn} u^m \psi \, \dgh(x)\, {\rm d}t$$
since the sequence $\{u_n^m\}$ is monotone and $u^m \in L^1\left( (0, T) ; L^{1} _{\Phi}(\hn) \right) $.
\end{itemize}

\noindent\textbf{The case of $\Phi_1$. }This case is completely analogous to the previous ones, hence we omit the proof.

\noindent\textbf{Uniqueness of Minimal Weak Dual Solutions. }

The proof is an adaptation to the present setting of the Proof of Theorem 4.5 of \cite{BV1}, we sketch it here for convenience of the reader. Here $\Phi$ will denote either an element of the class $\mathcal{L}$ or the ground state $\Phi_1$. Essentially, this uniqueness for the minimal WDS follows by the T-contraction property. Let $u$ be the WDS corresponding to the initial datum $u_0$, obtained as limit of the monotone non-decreasing sequence $u_k$. Consider another monotone non-decreasing sequence $0\le v_{0,k}\le v_{0,k+1}\le u_0$\,, with $v_{0,k}\in L^1(\hn)\cap L^\infty(\hn)$\,, monotonically converging from below to $u_0\in L^1_{\Phi}(\hn)$ in the strong $L^1_{\Phi}(\hn)$ topology. Repeating the construction, we obtain another weak dual solution $v(t,x)\in C^0([0,\infty)\,:\,L^1_{\Phi}(\hn))$. We want to show that $u=v$ by showing that $v\le u$ and then that $u\le v$. To prove that $v\le u$ we use the estimates
\begin{equation*}
\int_{\hn}\big[v_k(t,x)-u_n(t,x)\big]_+\dgh(x)\le \int_{\hn}\big[v_k(0,x)-u_n(0,x)\big]_+\dgh(x)
\end{equation*}
which hold for all $u_n(t,\cdot)$ and  $v_k(t,\cdot)$, see Theorem \ref{BCPP-Thm}. Letting $n\to \infty$ we get that
\begin{align*}
\lim_{n\to\infty}\int_{\hn}\big[v_k(t,x)-u_n(t,x)\big]_+\dgh(x)
&\le \lim_{n\to\infty}\int_{\hn}\big[v_k(0,x)-u_n(0,x)\big]_+\dgh(x) \\
&=\int_{\hn}\big[v_k(0,x)-u_0(x)\big]_+\dgh(x)=0
\end{align*}
\normalcolor since $v_k(0,x)\le u_0$ by construction. Therefore also $v_k(t,x)\le u(t,x)$ for $t>0$, so that in the limit $k\to \infty$ we obtain $v(t,x)\le u(t,x)$\,. The inequality $u\le v$ can be obtained simply by switching the roles of $u_n$ and $v_k$\,. \qed


\appendix

\section{Green function estimates in $\hn$}\label{s-Green-function}

This section is devoted to provide proper estimates of Green function $\GH(x, y)$ for the fractional laplacian on the hyperbolic space for $0 < s < 1.$ It is well-known that  the Green function
is given by the following explicit formula

\begin{equation*}
\GH (x, y) = \int_0^{+\infty} \frac{k_{\hn}(t,x,y)}{t^{1-s}}\,dt,
\end{equation*}
where $k_{\hn}(t,x,y)$ is the heat kernel for $- \Delta_{\hn}.$ Furthermore, $k_{\hn}$ satisfies the following estimates (see \cite{DA}): for $N \geq 2,$ there exist
some positive constants $A_N$ and $B_N$ such that
\begin{equation}\label{heat-estimates}
A_N h_N(t, x, y) \leq k_{\hn}(t, x, y) \leq B_N h_N(t, x, y) \quad \mbox{for all} \ t > 0 \ \mbox{and} \ x, y \in \hn,
\end{equation}
where $h_N (t, x, y)$ is given by
\begin{equation}\label{hN}
h_N(t, x, y) := h_N(t, r) = (4 \pi t)^{-\frac{N}{2}} e^{- \frac{(N-1)^2 t}{4} - \frac{(N-1) r}{2} - \frac{r^2}{4t}} ( 1 + r + t)^{\frac{(N-3)}{2}}(1 + r),
\end{equation}
where $r := r(x, y) = {\rm dist}(x, y)$.

\medskip

Using \eqref{heat-estimates} we shall obtain the following Green functions estimates:

\begin{lem}\label{thm-green}
Let $N \geq 3$ and  $0  < s < 1$. Then there exist $R_1, R_2, \underline C_1, \overline C_1, \underline C_2, \overline C_2>0$, only depending on $N$ and $s$, such that
\begin{equation}\label{G1}
\frac{\underline C_1}{(r(x, y))^{N -2s}} \leq \GH(x, y) \leq \frac{\overline C_1}{(r(x, y))^{N -2s}} \quad \text{ for all }(x, y) \in \hn: 0< r(x, y) \leq R_1;
\end{equation}
 and
\begin{equation}\label{G2}
  \frac{\underline C_2\,e^{-(N-1) r(x, y)}}{(r(x, y))^{1-s}} \leq \GH(x, y) \leq \frac{\overline C_2\, e^{-(N-1) r(x, y)}}{(r(x, y))^{1-s}} \quad \text{ for all }(x, y) \in \hn:r(x, y) \geq R_2.
\end{equation}

\end{lem}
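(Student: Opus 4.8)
The plan is to feed the two-sided heat kernel bound \eqref{heat-estimates} into the representation \eqref{g1}, so that $A_N\,I(r)\le\GH(x,y)\le B_N\,I(r)$ with $r=r(x,y)$ and
\[
I(r):=\int_0^{+\infty}t^{s-1}h_N(t,r)\,dt=(4\pi)^{-\frac N2}e^{-\frac{(N-1)r}{2}}(1+r)\,J(r),\qquad
J(r):=\int_0^{+\infty}t^{s-1-\frac N2}e^{-\frac{(N-1)^2t}{4}-\frac{r^2}{4t}}(1+r+t)^{\frac{N-3}{2}}\,dt.
\]
The restriction $N\ge3$ guarantees $\frac{N-3}{2}\ge0$ and $\frac N2-s>0$ and $N-2s>0$, which is exactly what makes all the integrals below converge and, in the two regimes, makes the leading term dominate. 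Everything reduces to sharp two-sided control of $J(r)$ for $r$ small and for $r$ large.

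\textbf{The regime $r\le R_1$.} Here I would split $J(r)=\int_0^1+\int_1^{+\infty}$. On $[1,+\infty)$ the weight $t^{s-1-\frac N2}e^{-(N-1)^2t/4}(1+r+t)^{\frac{N-3}{2}}$ is integrable, and since $e^{-R_1^2/4}\le e^{-r^2/(4t)}\le 1$ for $t\ge1$, $r\le R_1$, this piece is $\asymp1$ uniformly in $r$. On $(0,1]$ the factors $e^{-(N-1)^2t/4}$ and $(1+r+t)^{\frac{N-3}{2}}$ are comparable to positive constants, so $\int_0^1\asymp\int_0^1 t^{s-1-\frac N2}e^{-r^2/(4t)}\,dt$, and the substitution $\sigma=r^2/(4t)$ turns this into $r^{-(N-2s)}\int_{r^2/4}^{+\infty}\sigma^{\frac N2-s-1}e^{-\sigma}\,d\sigma$, which is $\asymp r^{-(N-2s)}$ for $r\le R_1$ because the remaining integral lies between two positive constants (its limit as $r\to0$ being $\Gamma(\tfrac N2-s)<\infty$). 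Since $N-2s>0$, the term $r^{-(N-2s)}\to\infty$ dominates the bounded piece, so $J(r)\asymp r^{-(N-2s)}$; multiplying by the prefactor, which is $\asymp1$ on $\{r\le R_1\}$, gives \eqref{G1}.

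\textbf{The regime $r\ge R_2$.} Here I would treat $J(r)$ by Laplace's method applied to $\phi(t)=-\frac{(N-1)^2t}{4}-\frac{r^2}{4t}$, which is maximal at $t^\ast=\frac{r}{N-1}$ with $\phi(t^\ast)=-\frac{(N-1)r}{2}$ and $\phi''(t^\ast)=-\frac{(N-1)^3}{2r}$. After the rescaling $t=\frac{r}{N-1}u$ one has $\phi=-\frac{(N-1)r}{4}(u+u^{-1})$, with $u+u^{-1}-2=\frac{(u-1)^2}{u}$ producing a Gaussian of width $\asymp r^{-1/2}$ concentrated at $u=1$; on that range $(1+r+t)^{\frac{N-3}{2}}\asymp r^{\frac{N-3}{2}}$, while the contributions from $u$ near $0$ and near $+\infty$ are exponentially smaller. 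Collecting powers, $J(r)\asymp r^{s-1-\frac N2}\cdot r\cdot r^{\frac{N-3}{2}}\cdot r^{-\frac12}e^{-\frac{(N-1)r}{2}}=r^{s-2}e^{-\frac{(N-1)r}{2}}$, hence $I(r)\asymp e^{-(N-1)r}(1+r)r^{s-2}\asymp e^{-(N-1)r}r^{s-1}$, which is \eqref{G2}. To keep the constants explicit, the upper bound can be obtained by $(1+r+t)^{\frac{N-3}{2}}\le(2+2r)^{\frac{N-3}{2}}(1+t)^{\frac{N-3}{2}}$ for $r\ge1$ and then splitting the $t$-integral at $t^\ast/2$ and $2t^\ast$ and estimating $\phi$ from above on each piece, while the lower bound follows by restricting the integral to $[t^\ast,2t^\ast]$ and bounding the integrand there from below.

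\textbf{Main obstacle.} The delicate part is the large-$r$ regime: making the Laplace-type estimate rigorous with matching upper and lower bounds requires controlling the algebraic prefactor $t^{s-1-\frac N2}(1+r+t)^{\frac{N-3}{2}}$ over the whole half-line and checking that the mass away from the saddle $t^\ast\asymp r$ is genuinely of lower order, so that the exponent $s-1$ (and not $s$ or $s-2$) comes out on both sides. The small-$r$ case is routine once the integral is rescaled. Finally, I would remark that \eqref{Hyp.Green.HN0}–\eqref{Hyp.Green.HN0b} in the body follow from \eqref{G1}–\eqref{G2} together with the continuity and strict positivity of $\GH$ on the compact annulus $R_1\le r(x,y)\le R_2$.
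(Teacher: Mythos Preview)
Your proposal is correct and, for the small-$r$ regime, essentially identical to the paper's: both use the substitution $\sigma=r^2/(4t)$ (the paper writes $\alpha$) to reduce $J(r)$ to $r^{-(N-2s)}$ times a bounded positive integral. Your extra splitting at $t=1$ is harmless but not needed.

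For the large-$r$ regime the two arguments diverge in presentation, though they rest on the same saddle-point idea. You rescale $t=\tfrac{r}{N-1}u$ and invoke Laplace's method around $u=1$, obtaining the Gaussian of width $\asymp r^{-1/2}$ and collecting powers. The paper instead makes the exact change of variables
\[
z=\frac{r}{2\sqrt t}-\frac{(N-1)\sqrt t}{2},\qquad \sqrt t=\frac{-z+\sqrt{z^2+(N-1)r}}{N-1},
\]
which completes the square and turns the exponent into a pure $e^{-z^2}$, so that the entire $r$-dependence sits in the algebraic prefactor and can be factored out explicitly; the remaining $z$-integral is then manifestly finite uniformly in $r$. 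What this buys over your route is that the ``away-from-the-saddle'' control you flag as the main obstacle becomes automatic: there is no need to split the $t$-integral and show the tails are of lower order, because the Gaussian weight $e^{-z^2}$ already dominates everything on the whole line. Conversely, your formulation makes the mechanism (a one-dimensional Laplace integral with large parameter $r$) more transparent and would generalize more readily if the exponent were not an exact square.
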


\begin{proof}
Let $h_{N}(t,r)$ be as defined in \eqref{hN}, we compute

\begin{align}\label{C1}
& \int_0^{+\infty} \frac{h_{N}(t,r)}{t^{1-s}}\,dt = \int_{0}^{\infty} \frac{(4 \pi t)^{N/2}
e^{-\frac{(N-1)^2}{4}t -
 \frac{(N-1)r}{2}- \frac{r^2}{4t}}(1 + r + t)^{(N-3)/2}(1 + r)}{t^{1 -s}}\,{\rm d}t \notag \\
 & =  \frac{e^{-(N-1)r/2 }(1 + r)}{(4 \pi)^{N/2}}
 \int_{0}^{\infty} t^{-N/2 - 1 + s} (1 + r + t)^{(N-3)/2}
 e^{-(\frac{(N-1) \sqrt{t}}{2}  - \frac{r}{2 \sqrt{t}})^2}
 e^{-(N-1)r/2 } \, {\rm d}t \notag \\
 & = \frac{e^{-(N-1) r}(1 + r)}{(4 \pi)^{N/2}}
 \int_{0}^{\infty} t^{\frac{-N + 2s -2}{2}} (1 + r + t)^{(N-3)/2}
 e^{-(\frac{(N-1) \sqrt{t}}{2}  - \frac{r}{2 \sqrt{t}})^2} \, {\rm d}t.
\end{align}
Now making the following substitution one can write

$$
z = \frac{r}{2\sqrt{t}} - \frac{(N-1)\sqrt{t}}{2} \ \Leftrightarrow \sqrt{t}
= \frac{- z + \sqrt{z^2 + (N-1) r}}{N-1}.
$$

Therefore, we have

$$
{\rm d}t = -\frac{2}{(N-1)^2} \frac{( \sqrt{z^2 + (N-1)r} - z)^2}{\sqrt{z^2 + (N-1) r}} \, {\rm d}z.
$$
Further substituting back in \eqref{C1} yields

\begin{align}
& \int_0^{+\infty} \frac{h_{N}(t,r)}{t^{1-s}}\,dt = \frac{ e^{-(N-1)r}(1 + r)}{(N-1)^{2s - 3}(4\pi)^{N/2}} \times  \notag \\
& \left\{ \int_{-\infty}^{\infty} \frac{(1 + r + ( \sqrt{z^2 + (N-1)r} -z)^2 )^{(N-3)/2}  (\sqrt{z^2 + (N-1)r} - z)^2 e^{-z^2}}{(-z + \sqrt{z^2 + (N-1)r})^{N - 2s +2} \sqrt{z^2 + (N-1) r}} \, {\rm d}z \right\}. \notag
\end{align}
Now for $r \rightarrow \infty,$ one can look for the leading term as follows

\begin{align}
& \int_0^{+\infty} \frac{h_{N}(t,r)}{t^{1-s}}\,dt = C_N  e^{-(N-1)r}(1 + r)  r^{\frac{-N + 2s -2}{2} + \frac{N-3}{2} + \frac{1}{2}} \times  \notag \\
& \underbrace{ \left\{ \int_{-\infty}^{\infty} \frac{((1/r + 1)(N-1)^2 + ( \sqrt{z^2/r + (N-1)} -z/\sqrt{r})^2 )^{(N-3)/2}  (\sqrt{z^2/r + (N-1)} - z/\sqrt{r})^2
e^{-z^2}}{(-z/r + \sqrt{z^2 + (N-1)})^{N - 2s +2} \sqrt{z^2/r + (N-1)}} \, {\rm d}z \right\}}_\text{$= I_{N}$} . \notag
\end{align}
The finiteness of $I_N$ follows easily from the fact that $e^{-z^2}$ is the leading term at infinity.
Therefore, recalling  \eqref{heat-estimates}, we obtain
$$
\int_0^{+\infty} \frac{h_{N}(t,r)}{t^{1-s}}\,dt = C_N I_N e^{-(N-1)r} r^{s-1} + \circ(e^{-(N-1)r} r^{s-1}) \quad \text{as } r \rightarrow \infty.
$$
Hence, we obtain \eqref{G2}. Now we turn to prove \eqref{G1}. To see this we can make the change of
variable as  $\alpha = \frac{r^2}{4t}$ and therefore we can write

$$
\int_0^{+\infty} \frac{h_{N}(t,r)}{t^{1-s}}\,dt = C_N \frac{e^{-(N-1)r/2} (1 + r)}{r^{(N-2s)}}
\underbrace{ \int_{0}^{\infty} {\alpha}^{\frac{N-2s -2}{2}} e^{\frac{-(N-1)^2 r^2}{16 \alpha} - \alpha}
\left(1 + r + \frac{r^2}{4 \alpha}\right)^{\frac{(N-3)}{2}} \, {\rm d}\alpha}_\text{$= J_N$}.
$$
Again, as before, the integral is finite for $0 < s < 1$ and hence

$$
G_{s}(r, t) = C_N J_N r^{-(N - 2s)} + o(r^{-(N - 2s)})\quad \text{as } r \rightarrow 0^+.
$$
This proves the lemma.

\end{proof}
From Lemma \ref{thm-green} we derive
\begin{cor}\label{cor-1}
Let $N \geq 3$ and $ 0 < s < 1$. Then there exist positive constants $\underline C_3,\overline C_3, \underline C_4, \overline C_4$, only depending on $N$ and $s$, such that
\begin{equation}\label{G3}
\underline C_3 \frac{1}{(r(x, y))^{N-2s}}  \leq \GH(x, y) \leq \overline C_3 \frac{1}{(r(x, y))^{N-2s}} \quad \text{ for all }(x, y) \in \hn: 0<r(x, y) \leq 1\,,
\end{equation}
\begin{equation}\label{G4}
 \underline C_4 \frac{ e^{-(N-1) r(x, y)}}{(r(x, y))^{1-s}}  \leq  \GH(x, y) \leq \overline C_4 \frac{ e^{-(N-1) r(x, y)}}{(r(x, y))^{1-s}} \quad \text{ for all }(x, y) \in \hn: r(x, y) \geq 1\,.
\end{equation}
Furthermore, the following global estimate holds:
\begin{equation}\label{G5}
\GH(x, y) \leq \frac{\overline{C}_5}{(r(x, y))^{N-2s}} \quad \text{ for all }(x, y) \in \hn: r(x, y) > 0\,,
\end{equation}
where $\overline{C}_5=\min\{\overline C_3,\overline C_4\}$.
\end{cor}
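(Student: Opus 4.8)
The estimates of Lemma \ref{thm-green} already control $\GH$ in the two extreme regimes $0<r\le R_1$ and $r\ge R_2$, so the only work left is to close the (possibly empty) intermediate annular region $R_1\le r\le R_2$, and then, for \eqref{G5}, to reconcile the two model profiles $r^{-(N-2s)}$ and $e^{-(N-1)r}r^{s-1}$. The plan is to exploit continuity and strict positivity of $\GH$ on compact subsets of $(0,\infty)$: on such sets both $\GH$ and each model function are pinched between positive constants, which can simply be absorbed into $\underline C_3,\overline C_3,\underline C_4,\overline C_4$.

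First I would note that, by homogeneity of $\hn$, $\GH(x,y)$ depends only on $r=r(x,y)$; write $\GH=G(r)$. From the representation \eqref{g1} together with the two-sided bound \eqref{heat-estimates}--\eqref{hN}, one checks that for $r$ ranging in a fixed $[a,b]\subset(0,\infty)$ the integrand $k_{\hn}(t,r)\,t^{s-1}$ is dominated by a fixed integrable function of $t$ (the factor $e^{-r^2/4t}$ forces convergence as $t\to0^+$, the factor $e^{-(N-1)^2t/4}$ as $t\to+\infty$); since $k_{\hn}$ is continuous and strictly positive, dominated convergence gives that $G$ is continuous and $>0$ on $(0,\infty)$. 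Hence on any compact $[a,b]\subset(0,\infty)$ the ratios $G(r)\,r^{N-2s}$ and $G(r)\,e^{(N-1)r}r^{1-s}$ are bounded above and below by positive constants. Applying this with $[a,b]=[\min\{R_1,1\},1]$ and combining with \eqref{G1} yields \eqref{G3}; applying it with $[a,b]=[1,\max\{R_2,1\}]$ and combining with \eqref{G2} yields \eqref{G4}. (If $R_1\ge1$, resp.\ $R_2\le1$, the corresponding interval is trivial and there is nothing to add.)

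For the global bound \eqref{G5}: when $r\le1$ it is exactly the upper estimate in \eqref{G3}. When $r\ge1$, I would start from the upper estimate in \eqref{G4}, namely $\GH\le\overline C_4\,e^{-(N-1)r}r^{s-1}$, and use the elementary fact that $\varphi(r):=e^{-(N-1)r}r^{N-s-1}$ — whose exponent $N-s-1$ is positive since $N\ge3$ and $s<1$ — attains its maximum on $(0,\infty)$ at $r_\ast=(N-s-1)/(N-1)<1$, so $\varphi$ is decreasing on $[1,\infty)$ and $\varphi(r)\le\varphi(1)=e^{-(N-1)}<1$ there; equivalently $e^{-(N-1)r}r^{s-1}\le r^{-(N-2s)}$ for $r\ge1$. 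Thus $\GH\le\overline C_4\,r^{-(N-2s)}$ for $r\ge1$, and \eqref{G5} follows on all of $(0,\infty)$ upon taking $\overline C_5$ to be the larger of $\overline C_3$ and $\overline C_4$.

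The only real care needed — there is no genuine analytic difficulty — is bookkeeping: one must not tacitly assume $R_1<1<R_2$, so that the compactness step is invoked only on the genuinely nonempty part of the intermediate range; and one must use the continuity and positivity of $G$ only on sets bounded away from both $0$ (where $G$ blows up like $r^{-(N-2s)}$) and $\infty$ (where it decays exponentially). Everything else reduces to the elementary monotonicity computation for $\varphi$ above.
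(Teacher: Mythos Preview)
Your proof is correct and follows essentially the same approach as the paper: exploit continuity and strict positivity of $G(r)$ on compact intervals bounded away from $0$ and $\infty$ to bridge the gaps $[R_1,1]$ and $[1,R_2]$ left by Lemma~\ref{thm-green}, and then use the elementary inequality $e^{-(N-1)r}r^{s-1}\le r^{-(N-2s)}$ for $r\ge1$ to get the global upper bound. You are also right to take $\overline{C}_5=\max\{\overline{C}_3,\overline{C}_4\}$; the $\min$ appearing in the statement is a typo.
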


\begin{proof}
The proof of this corollary is a straightforward consequence of Lemma~\ref{thm-green}. We give a detailed proof of \eqref{G4}, the proof of \eqref{G3} can be achieved with similar arguments. On the other hand, the proof of \eqref{G5} follows by noting that $e^{-(N-1)r}\leq r^{-N+s+1}$ for all $r>1$ and then combining \eqref{G3} with \eqref{G4}.
\par
\medskip
\textbf{Proof of \eqref{G4}.} We may assume $R_2 > 1,$ where $R_2$ is as given in Lemma~\ref{thm-green} (otherwise, there is nothing to prove).  Next we set
\begin{equation}\label{g}
g(r):= \frac{e^{-(N-1)r}}{r^{1-s}} \quad \text{with }r>0 \,
\end{equation}
and we note that
$$
g(r(x, y)) \geq g(R_2) \quad \text{for all }1 \leq r(x, y) \leq R_2\,.
$$
Set $M : = { \rm max}_{1 \leq r(x, y) \leq R_2} \GH(x, y)$, then
$$
\GH(x, y) \leq M = \frac{M}{g(R_2)} g(R_2) \leq \frac{M}{g(R_2)} g(r(x, y)) \quad \text{for all } 1 \leq r(x, y) \leq R_2.
$$
By taking $\overline C_4: = {\rm max} \{ \frac{M}{g(R_2)}, \overline C_2 \}$ we get the upper bound in \eqref{G4}.

Now, to obtain the lower bound, we note that
$$
g(r(x, y)) \leq g(1) \quad \text{for all }1 \leq r(x, y) \leq R_2\,.
$$
Then, we set $m : = { \rm min}_{1 \leq r(x, y) \leq R_2} \GH(x, y)$ and we conclude
$$
\GH(x, y) \geq m =\frac{m}{g(1)} g(1) \geq \frac{m}{g(1)} g(r(x, y)) \quad \text{for all } 1 \leq r(x, y) \leq R_2.
$$
Taking $\underline C_4 := {\rm min} \{ \frac{m}{g(1)}, \underline C_2 \},$ we obtain the lower bound in \eqref{G4}.

\end{proof}

It can be useful to check that, if $\psi \in C^1_c(0,T; \mbox{L}_c^{\infty}(\hn))$ with $\psi\geq 0$, then $(- \Delta_{\hn})^{-s}(\psi)$ behaves like $G_{s}(x_0,x)$ near infinity:
\begin{lem}\label{Green-operator-estimate}
For all $\psi \in C^1_c(0,T; \mbox{L}_c^{\infty}(\hn))$ with $\psi\geq 0$   and $x_0\in \hn$, there exist
$\hat R=\hat R(\psi,x_0) > 0$ and two positive functions $\underline C=\underline C(\psi,x_0),\overline C=\overline C(\psi,x_0) \in C^1_c(0,T)$ such that
\begin{equation}\label{est1}
\underline C(t)\,\frac{ e^{-(N-1) r(x_0, x)}}{(r(x_0, x))^{1-s}} \leq  (-\Delta_{\hn})^{-s}\,\psi(x,t) \leq \overline C(t)\, \frac{ e^{-(N-1) r(x_0, x)}}{(r(x_0, x))^{1-s}}  \quad  \text{ for } r(x_0,x)\geq  \hat R\,.
\end{equation}
Furthermore, for all $R>0$, there exist  two positive functions $\underline D=\underline D(R,\psi,x_0),\overline D=\overline D(R,\psi,x_0) \in C^1_c(0,T)$  such that
\begin{equation}\label{est2}
 \underline D(t) \leq  (-\Delta_{\hn})^{-s}\,\psi(x,t) \leq \overline D(t)  \quad  \text{ for } r(x_0,x)\leq R\,.
\end{equation}
\end{lem}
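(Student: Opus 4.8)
The plan is to exploit the representation $(-\Delta_{\hn})^{-s}\psi(\cdot,t)=\int_{\hn}\GH(\cdot,y)\,\psi(y,t)\,\dg(y)$ (cf.\ \eqref{inverse}) together with the sharp two-sided bounds on $\GH$ from Corollary \ref{cor-1}, using crucially that a test function $\psi\in C^1_c(0,T;L^\infty_c(\hn))$ has, by the standing convention, spatial support contained in a fixed compact set. Fix $x_0\in\hn$ (the base point of $r(\cdot,\cdot)$) and let $\rho>0$ be such that $K:=\overline{B_\rho(x_0)}$ contains $\bigcup_{t\in(0,T)}\operatorname{supp}\psi(\cdot,t)$.

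To prove \eqref{est1} I would set $\hat R:=2\rho+2$. For $r(x_0,x)\ge\hat R$ and $y\in K$ the triangle inequality gives $r(x_0,x)-\rho\le r(x,y)\le r(x_0,x)+\rho$, hence in particular $r(x,y)\ge1$, so the estimate \eqref{G4} applies at $(x,y)$. Since $g(r):=e^{-(N-1)r}r^{s-1}$ satisfies $(\log g)'(r)=-(N-1)+(s-1)/r<0$, it is strictly decreasing, so $g(r(x_0,x)+\rho)\le g(r(x,y))\le g(r(x_0,x)-\rho)$, and the ratios $g(r-\rho)/g(r)$ and $g(r+\rho)/g(r)$ stay between two positive constants depending only on $N,s,\rho$ once $r\ge\hat R$. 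Absorbing $\underline C_4,\overline C_4$, this produces constants $0<c_-\le c_+$, depending only on $N,s,\rho$, with $c_-\,g(r(x_0,x))\le\GH(x,y)\le c_+\,g(r(x_0,x))$ for all $y\in K$ whenever $r(x_0,x)\ge\hat R$. Multiplying by $\psi(y,t)\ge0$ and integrating over $K$ yields
\[
c_-\,g(r(x_0,x))\,F(t)\ \le\ (-\Delta_{\hn})^{-s}\psi(x,t)\ \le\ c_+\,g(r(x_0,x))\,F(t),
\]
with $F(t):=\int_{\hn}\psi(y,t)\,\dg(y)=\|\psi(\cdot,t)\|_{L^1(\hn)}$ (equality since $\psi\ge0$). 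Since $\partial_t\psi$ exists, is bounded, and is supported in $K$ with $\mu_{\hn}(K)<\infty$, differentiation under the integral sign gives $F\in C^1_c(0,T)$, $F\ge0$ (and $F>0$ wherever $\psi(\cdot,t)\not\equiv0$); taking $\underline C:=c_-F$ and $\overline C:=c_+F$ finishes \eqref{est1}.

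For \eqref{est2} I would fix $R>0$ and note that if $r(x_0,x)\le R$ then $K\subseteq B_{R+\rho}(x)$, so
\[
(-\Delta_{\hn})^{-s}\psi(x,t)\ \le\ \|\psi(\cdot,t)\|_{L^\infty(\hn)}\int_{B_{R+\rho}(x)}\GH(x,y)\,\dg(y)\ \le\ \|\psi(\cdot,t)\|_{L^\infty(\hn)}\,\frac{\kappa_1}{2s}\,\cosh(R+\rho)^{N-1}(R+\rho)^{2s},
\]
the last step by \eqref{Hyp.Green.HN1} (one could instead use \eqref{G5} and integrate in geodesic polar coordinates). As $t\mapsto\|\psi(\cdot,t)\|_{L^\infty(\hn)}$ is continuous with compact support in $(0,T)$, it is bounded by some $C_\psi<\infty$; choosing a cutoff $\zeta\in C^\infty_c(0,T)$ with $0\le\zeta\le1$ and $\zeta\equiv1$ on the support of $\psi$, the function $\overline D(t):=C_\psi\,\frac{\kappa_1}{2s}\,\cosh(R+\rho)^{N-1}(R+\rho)^{2s}\,\zeta(t)\in C^1_c(0,T)$ satisfies \eqref{est2} (trivially when $\psi(\cdot,t)\equiv0$, and by the displayed bound otherwise).

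I expect the argument to be essentially routine given Corollary \ref{cor-1}; the only points needing care are the uniform comparison $\GH(x,y)\asymp g(r(x_0,x))$ for $y$ ranging over the fixed compact spatial support of $\psi$ — which is where monotonicity of $g$ and the triangle inequality enter — and arranging for the constants to be genuine $C^1_c(0,T)$ functions. This last point is exactly why the sign hypothesis $\psi\ge0$ is imposed: it makes $\|\psi(\cdot,t)\|_{L^1(\hn)}$ a $C^1$ function of $t$, while a time cutoff absorbs the $L^\infty$ factor in \eqref{est2}.
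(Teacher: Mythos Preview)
Your proof is correct and follows essentially the same approach as the paper's: both use the representation \eqref{inverse}, the two-sided Green function bounds from Corollary~\ref{cor-1}, the triangle inequality to compare $r(x,y)$ with $r(x_0,x)$ for $y$ in the fixed spatial support $K$, and the monotonicity of $g(r)=e^{-(N-1)r}r^{s-1}$. The only cosmetic difference is that the paper keeps the $y$-dependence in the ratio $g(r(x,y))/g(r(x_0,x))$ and absorbs it into the integral defining $\overline C(t),\underline C(t)$, whereas you bound the ratio uniformly in $y\in K$ by constants $c_\pm$ and then set $\overline C=c_+F$, $\underline C=c_-F$ with $F(t)=\|\psi(\cdot,t)\|_{L^1}$; for \eqref{est2} you invoke \eqref{Hyp.Green.HN1} directly while the paper uses \eqref{G5} and integrates in polar coordinates, but these are equivalent. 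Your extra care with the time cutoff $\zeta$ to force $\overline D\in C^1_c(0,T)$ is a nice touch, since $t\mapsto\|\psi(\cdot,t)\|_{L^\infty}$ need not be $C^1$.
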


\begin{proof}
\textbf{Proof of \eqref{est1}.}
We first prove the upper bound. Let $g=g(r)$ be as in \eqref{g} and denote with $K\subset \hn$ the compact support of $\psi$ with respect to the space variable. Then we can write
\begin{align}\label{represent-1}
(-\Delta_{\hn})^{-s}\,\psi(x,t) &= \int_{K} \GH(x, y) \psi(y,t) \, \dg(y)
\\\notag &= g(r(x_0, x)) \int_{K} \frac{ \GH(x, y)}{g(r(x_0, x))} \, \psi(y,t) \, \dg(y)\,.
\end{align}
Furthermore, by
Corollary~\ref{cor-1} there exist constants $\underline C_4$ and $\overline C_4$
such that there holds
$$
\underline C_4\, g(r(x, y)) \leq \GH(x, y) \leq \overline C_4 \, g(r(x, y)) \quad \mbox{for} \  r(x, y) > 1.
$$
Let $\gamma > 0$ be such that $r(x_0, y) \leq \gamma$ for all $y \in K$ and assume $r(x_0, x) \geq 1 + \gamma,$  we have
$$
r(x,y)  \geq  r(x_0, x) - r(x_0, y) > 1 + \gamma - \gamma = 1.
$$
Hence,
$$
\frac{\GH(x, y)}{g(r(x_0, x))} \leq \overline C_4 \frac{g(r(x,y))}{g(r(x_0, x))}\quad \mbox{for} \ r(x_0, x) \geq 1 + \gamma.
$$
Now, using the fact that the function $g$ is decreasing, for $r(x_0, x) \geq  1 + \gamma$ and for all $y\in K$ we have
 \begin{align*}
 \frac{\GH(x, y)}{g(r(x_0, x))}&  \leq \overline C_4 \,  \frac{g(r(x, y))}{g(r(x_0, x))}
 \leq \overline C_4 \,  \frac{g(r(x_0, x) - r(x_0, y))}{g(r(x_0, x))} \\
 & = \overline C_4 \, e^{(N-1)r(x_0, y)} \left( \frac{r(x_0, x)}{r(x_0, x) - r(x_0, y)} \right)^{1-s} \\
 & = \overline C_4 \,  e^{(N-1)r(x_0, y)} \frac{1}{\left( 1- \frac{r(x_0, y)}{r(x_0, x)} \right)^{1-s}}
 \leq \overline C_4 \frac{e^{(N-1)r(x_0, y)}}{\left( 1- \frac{r(x_0, y)}{1 + \gamma} \right)^{1-s}}\,,
 \end{align*}
 where the latter term is well defined since $r(x_0, y) \leq \gamma$ for all $y \in K.$
Now, plugging the above estimate into \eqref{represent-1} we conclude that
$$
(-\Delta_{\hn})^{-s}\,\psi(x,t) \leq g(r(x_0, x)) \underbrace{\int_{K} \overline C_4 \,
\frac{e^{(N-1)r(x_0, y)}}{\left( 1- \frac{r(x_0, y)}{1 + \gamma} \right)^{1-s}} \,  \psi(y,t)\, \dg(y)}_{= \overline C(t)\in C^1_c(0,T)} \quad \text{for all } r(x_0, x) \geq 1+\gamma\,.
$$
\medskip

Next we turn to prove lower the bound in \eqref{est1}. For $r(x_0, x) \geq 1+\gamma$, invoking again Corollary~\ref{cor-1}, we have
\begin{align*}
\int_{K} \frac{\GH(x, y)}{g(r(x_0, x))} \, \psi(y,t) \, \dg(y) & \geq \underline C_4 \int_{K}
\frac{g(r(x, y))}{g(r(x_0, x))} \, \psi(y,t) \, \dg(y) \notag \\
& = \underline C_4 \int_{K} e^{(N-1) (r(x_0, x) - r(x, y))} \left( \frac{r(x_0, x)}{r(x, y)} \right)^{1-s} \, \psi(y,t) \,  \dg(y) \notag \\
& \geq \underline C_4 \int_{K} e^{-(N-1) r(x_0, y)} \left( \frac{r(x_0, x)}{r(x, y)} \right)^{1-s} \, \psi(y,t) \, \dg(y)\,,
\end{align*}
where the last estimate follows from
 $$
 r(x_0, x) - r(x, y) \geq - r(x_0, y)\,.
 $$
Assume now that $r(x_0, x) > 3\gamma$, recalling that $r(x_0, y) \leq \gamma,$ we infer
$$r(x, y) \geq r(x_0, x) - r(x_0, y)\geq 2\gamma$$
which yield
$$
\left( 1 -  \frac{r(x_0, y)}{r(x, y)} \right)^{1-s} \geq \frac{1}{2^{1-s}}.
$$
Inserting this into \eqref{represent-1} we finally obtain
$$
(-\Delta_{\hn})^{-s}\,\psi(x,t) \geq g(r(x_0, x)) \underbrace{\int_{K} \overline C_4 \,
\frac{e^{(N-1)r(x_0, y)}}{2^{1-s}} \,  \psi(y,t)\, \dg(y)}_{= \underline C(t)\in C^1_c(0,T)} \quad \text{for all } r(x_0, x) > 3\gamma.
$$
Summarising, the proof of \eqref{est1} follows by taking $\hat R:=\max\{1+\gamma, 3\gamma\} $.

\medskip

\textbf{Proof of \eqref{est2}.}
As above, let $K$ be the compact support of $\psi$ with respect to $x$ and let $\gamma > 0$ be such that $r(x_0, y) \leq \gamma$ for all $y \in K$. Let $R>0$, for all $r(x_0,x)\leq R$ we have $r(x,y)\leq r(x,x_0)+r(x_0,y)\leq R +\gamma$. Hence, by exploiting the estimate \eqref{G5}, we infer
\begin{align*}
(-\Delta_{\hn})^{-s}\,\psi(x,t) & = \int_{K} \GH(x, y) \psi(y,t) \dg(y) \\
& \leq \|\psi (x,t)\|_{L^{\infty}(\hn)} \int_{B_{R +\gamma}(x)}  \frac{\overline{C}_5}{(r(x, y))^{N-2s}} \, \dg(y)
\end{align*}
and the  upper bound in \eqref{est2}  proof follows by setting $$D(t):= \|\psi (x,t)\|_{L^{\infty}(\hn)} \omega_N \int_{0}^{R +\gamma}  \frac{\overline{C}_5}{r^{N-2s}} \,(\sinh(r))^{N-1}\, {\rm d}r\,$$ where $\omega_N$ is the volume of the $N$ dimensional unit sphere.\par
 
The lower bound in \eqref{est2}, readily follows from Corollary \ref{cor-1}, indeed we have
\begin{align*}
(-\Delta_{\hn})^{-s}\,\psi(x,t)& \geq  \underline C_3 \int_{K \cap B_1(x)}  \frac{1}{(r(x, y))^{N-2s}} \psi(y,t) \dg(y)
\\& +  \underline C_4 \int_{K \cap B_1^{c}(x)} \frac{ e^{-(N-1) r(x, y)}}{(r(x, y))^{1-s}} \psi(y,t) \dg(y) 
\\& \geq  \underline C_3 \int_{K \cap B_1(x)}  \psi(y,t) \dg(y)+  \underline C_4  \frac{ e^{-(N-1) (R+\gamma)}}{(R+\gamma)^{1-s}} \int_{K \cap B_1^{c}(x)} \psi(y,t) \dg(y) 
\\& \geq \min\{ \underline C_3, \underline C_4  \frac{ e^{-(N-1) (R+\gamma)}}{(R+\gamma)^{1-s}} \} \int_{K} \psi(y,t) \dg(y) :=\underline D(t)\,.
\end{align*}

\end{proof}


\par\bigskip\noindent
\textbf{Acknowledgments. }The second author is partially supported by Project MTM2017-85757-P (Spain), and by the E.U. H2020 MSCA programme, grant agreement 777822. The first and fourth author are partially supported by the PRIN project 201758MTR2 ``Direct and inverse problems for partial differential equations: theoretical aspects and applications'' (Italy) and they  are members of the Gruppo Nazionale per l'Analisi Matematica, la Probabilit\`a e le loro Applicazioni (GNAMPA) of the Istituto Nazionale di Alta Matematica (INdAM). The research of the third author is supported in part by an INSPIRE faculty fellowship (IFA17-MA98).


\end{document}